\DeclareMathOperator{\Tw}{Tw}
\newcommand{\TwL}{\Tw^{\ell}}
\newcommand{\LV}{\mathrm{LV}}
\newcommand{\IV}{\mathrm{IV}}
\newcommand{\DC}{\simp_{/\mathcal{C}}}
\newcommand{\Dst}{\simp_{*}}
\newcommand{\DCst}{\simp_{/\mathcal{C},*}}
\newcommand{\DCop}{\simp_{/\mathcal{C}}^{\op}}
\newcommand{\Yo}{\mathsf{y}}
\let\lim\relax \DeclareMathOperator{\lim}{lim}
\let\colim\relax \DeclareMathOperator{\colim}{colim}
\title{On (co)ends in $\infty$-categories}
\author{Rune Haugseng}
\address{Department of Mathematical Sciences, NTNU, Trondheim, Norway}
\urladdr{http://folk.ntnu.no/runegha}
\date{\today}
\begin{document}

\begin{abstract}
  In this short note we prove that two definitions of (co)ends in
  $\infty$-categories, via twisted arrow $\infty$-categories and via
  $\infty$-categories of simplices, are equivalent. We also show that
  weighted (co)limits, which can be defined as certain (co)ends, can
  alternatively be described as (co)limits over left and right
  fibrations, respectively.
\end{abstract}

\maketitle

\tableofcontents

\section{Introduction}
Ends and coends were first introduced by Yoneda~\cite{YonedaExt} and
play an important role in the theory of both ordinary and enriched
categories. Indeed, the calculus of (co)ends can be
viewed as a central organizing principle in category theory; we refer
the reader to the book \cite{LoregianEnd} for further discussion and
many applications of (co)ends. 

Ends and coends can also be defined in the context of \icats{}, though
it is not immediately clear how to do this if we use what seems to be
the most common way of defining (co)ends, in terms of so-called
``extranatural transformations''. Luckily, it is not actually
necessary to introduce this notion, as there are two easy ways to
define the end of a functor
$F \colon \mathcal{C}^{\op} \times \mathcal{C} \to \mathcal{D}$ as an
ordinary limit:
\begin{enumerate}[(A)]
\item Let $\TwL(\mathcal{C})$ denote the (left) twisted arrow
category of $\mathcal{C}$. This has morphisms in $\mathcal{C}$ as
objects, with a morphism from $f \colon x \to y$ to $f' \colon x' \to
y'$ given by a commutative diagram
\[
  \begin{tikzcd}
    x \arrow{d}[swap]{f}  & x'  \arrow{d}{f'} \arrow{l} \\
    y  \arrow{r} & y'. 
  \end{tikzcd}
\]
Taking the source and target of morphisms gives a functor
\[p \colon \TwL(\mathcal{C}) \to \mathcal{C}^{\op} \times \mathcal{C},\] and
the \emph{end} of $F$ is just the limit of the composite functor
\[F \circ p \colon \TwL(\mathcal{C}) \to \mathcal{D}.\]
\item If $\mathcal{D}$ has products, the end of $F$ can be
  expressed as the (reflexive) equalizer of the two morphisms
  \begin{equation}
    \label{eq:endequal}
   \prod_{x
      \in \mathcal{C}} F(x,x) 
    \rightrightarrows
    \prod_{f \colon x \to y} F(x,y),    
  \end{equation}
  given on the factor indexed by $f$ by projecting to $F(y,y)$ and
  $F(x,x)$ and composing with $f$ in the first and second variable,
  respectively. We can
  reinterpret this (and get rid of the assumption on $\mathcal{D}$)
  using the \emph{category of simplices} $\simp_{/\mathcal{C}}$ of
  $\mathcal{C}$. An object here is a functor $[n] \to \mathcal{C}$ with
  $[n]$ in $\simp$ (\ie{} a sequence of $n$ composable morphisms in
  $\mathcal{C}$ for $n \geq 0$), and a morphism is a
  commutative triangle
  \[ \opctriangle{{[n]}}{{[m]}}{\mathcal{C}}{\phi}{F}{G} \]
  for some morphism $\phi \colon [n] \to [m]$ in $\simp$. We then have
  a functor \[q \colon \simp_{/\mathcal{C}} \to \mathcal{C}^{\op}
  \times \mathcal{C}\] that takes $F \colon [n] \to \mathcal{C}$ to
  $(F(0), F(n))$ and a morphism as above to
  \[ (G(0) \to G(\phi(0)) = F(0), F(n) = G(\phi(n)) \to G(m)).\]
  The end of $F$ can then be defined as the limit of the composite
  \[ F \circ q \colon \simp_{/\mathcal{C}} \to \mathcal{D}.\]
  We can compute this in two stages by first taking the right Kan
  extension along the projection $\simp_{/\mathcal{C}} \to
  \simp$ and then taking the limit of the resulting cosimplicial
  diagram; for ordinary categories the inclusion of the subcategory of
  $\simp$ containing just the two coface maps $[0] \rightrightarrows [1]$
  is coinitial\footnote{We say a functor $F$ of \icats{} is \emph{cofinal}
    if the map on colimits induced by $F$ is always an equivalence,
    and \emph{coinitial} if $F^{\op}$ is cofinal, \ie{} the induced
    map on limits is always an equivalence.},   
  and this recovers the equalizer \cref{eq:endequal} we started
  with.\footnote{Pulling back this coinitial map along the right fibration
  $\simp_{/\mathcal{C}} \to \simp$, we see that 
  it is enough to consider the
  subcategory of $\simp_{/\mathcal{C}}$ consisting of functors $[n]
  \to \mathcal{C}$ with $n = 0,1$ and morphisms that lie over the two
  coface maps. This recovers the description of
  an end as a limit discussed in \cite{MacLaneWorking}*{\S IX.5}.}
\end{enumerate}
Both of these definitions have natural extensions to \icats{},
and our main
goal in this paper is to show that these definitions of \icatl{}
ends are in fact equivalent:
\begin{thm}
  For a functor of \icats{} $F \colon \mathcal{C}^{\op} \times
  \mathcal{C} \to \mathcal{D}$, the limits of the composites
  \[ \TwL(\mathcal{C}) \to \mathcal{C}^{\op} \times \mathcal{C} \to
    \mathcal{D}, \qquad \simp_{/\mathcal{C}} \to \mathcal{C}^{\op} \times
    \mathcal{C} \to \mathcal{D}\]
  are (naturally) equivalent if either exists.
\end{thm}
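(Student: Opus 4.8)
The plan is to construct a single comparison functor over $\mathcal{C}^{\op}\times\mathcal{C}$ and prove that it is coinitial. There is an evident functor
\[ u \colon \simp_{/\mathcal{C}} \longrightarrow \TwL(\mathcal{C}) \]
taking a sequence of composable morphisms $\sigma \colon [n] \to \mathcal{C}$ to its total composite $\sigma(0) \to \sigma(n)$, regarded as an object of $\TwL(\mathcal{C})$, and taking a morphism lying over $\phi \colon [n] \to [m]$ to the induced morphism of composites. By construction $p \circ u \simeq q$, so it suffices to show that $u$ is coinitial: then for every $F$ the limit of $F \circ q \simeq F \circ p \circ u$ over $\simp_{/\mathcal{C}}$ exists precisely when the limit of $F \circ p$ over $\TwL(\mathcal{C})$ does, and the two agree, naturally in $F$.

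Recall that $u$ is coinitial exactly when, for every object $e$ of $\TwL(\mathcal{C})$, the $\infty$-category $\simp_{/\mathcal{C}} \times_{\TwL(\mathcal{C})} \TwL(\mathcal{C})_{/e}$ is weakly contractible (this is the $\infty$-categorical analogue of Quillen's Theorem A, applied to $u^{\op}$). I would identify this fibre product explicitly. Writing $e = (f \colon x \to y)$, unwinding the definitions shows that an object is a sequence $\sigma \colon [n] \to \mathcal{C}$ together with morphisms $x \to \sigma(0)$ and $\sigma(n) \to y$ and a homotopy identifying the composite $x \to \sigma(0) \to \cdots \to \sigma(n) \to y$ with $f$ --- in other words, a factorization of $f$ into $n+2$ composable morphisms. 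Projecting to $\simp$, the fibre product becomes a right fibration over $\simp$ whose fibre over $[n]$ is the space $\mathcal{X}_n$ of $(n+2)$-step factorizations of $f$; equivalently, it is the $\infty$-category of elements over $\simp^{\op}$ of the simplicial space $\mathcal{X}_\bullet$ of factorizations of $f$ --- a two-sided bar construction whose realization is the classifying space of the $\infty$-category of factorizations of $f$.

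I would then use two standard facts. First, the classifying space of the $\infty$-category of elements of a simplicial space is its geometric realization, so the classifying space of our fibre product is $|\mathcal{X}_\bullet|$. Second, the augmented simplicial space $\mathcal{X}_\bullet \to \mathcal{X}_{-1} \simeq *$ (the augmentation being the contractible space of $1$-step factorizations of $f$) is split: prepending $\mathrm{id}_x$ to a factorization defines an extra degeneracy. Hence $|\mathcal{X}_\bullet| \simeq *$ --- equivalently, the $\infty$-category of factorizations of $f$ has an initial object, namely $x \xrightarrow{\mathrm{id}} x \xrightarrow{f} y$, so its classifying space is a point. Therefore $\simp_{/\mathcal{C}} \times_{\TwL(\mathcal{C})} \TwL(\mathcal{C})_{/e}$ is weakly contractible for every $e$, $u$ is coinitial, and the theorem follows.

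The step I expect to be the main obstacle is the explicit identification of the fibre product $\simp_{/\mathcal{C}} \times_{\TwL(\mathcal{C})} \TwL(\mathcal{C})_{/e}$: one has to unwind the mapping spaces of $\TwL(\mathcal{C})$ carefully and check that the simplicial structure on the spaces of $(n+2)$-step factorizations of $f$ inherited from $\simp_{/\mathcal{C}} \to \simp$ is the evident bar-construction one, so that the passage to the $\infty$-category of elements of $\mathcal{X}_\bullet$ is legitimate. It is worth emphasizing that this fibre product generally has neither an initial nor a terminal object, so its contractibility is genuinely a statement about classifying spaces; this is why one routes the argument through $\mathcal{X}_\bullet$ (or, in the same spirit, through the equivalence $|\simp_{/Y}| \simeq |Y|$ for a simplicial set $Y$) rather than by exhibiting an initial or terminal object directly.
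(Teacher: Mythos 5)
Your strategy --- a single comparison functor $u \colon \DC \to \TwL(\mathcal{C})$ over $\mathcal{C}^{\op}\times\mathcal{C}$, shown to be coinitial via Quillen's Theorem A --- is genuinely different from the paper's, which never constructs such a $u$ but instead produces a zigzag $\TwL(\mathcal{C}) \leftarrow \simp_{/\TwL(\mathcal{C})} \to \DC$ commuting over $\mathcal{C}^{\op}\times\mathcal{C}$ (\cref{propn:twsquare}) and checks that both legs are coinitial. The gap in your proposal is its very first sentence: in the $\infty$-categorical setting there is no ``evident'' total-composite functor $u$. Every functor out of $\DC$ in the paper is manufactured formally, either by pulling back the right fibration $\DC \to \simp$ along a natural transformation of endofunctors of $\simp$, or as a localization; your $u$ cannot be obtained in the first way, because the candidate collapse map $[n]^{\op}\star[n]\to[n]$ is \emph{not} natural in $[n]$ (naturality against $\phi\colon[n]\to[m]$ fails on the $[n]^{\op}$ factor unless $\phi(0)=0$), so there is no transformation $\epsilon \to \id$ inducing it, and $\epsilon_{\mathcal{C}}$ is not a localization either. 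Defining $u$ ``on objects and on morphisms'' leaves all higher coherences unspecified, and the compatibility $\eta_{\mathcal{C}}\circ u \simeq (\mathfrak{I}_{\mathcal{C}},\mathfrak{L}_{\mathcal{C}})$ is likewise data to be constructed, not a property to be observed. This is not a pedantic point: it is exactly the difficulty that the paper's detour through $\simp_{/\TwL(\mathcal{C})}$ is designed to sidestep. A construction of $u$ can be given (for instance strictly, at the level of the $1$-categorical category of simplices of a quasicategory, followed by a comparison with the $\infty$-categorical slice used here), but some such argument must be supplied, and it --- not the identification of the comma categories --- is the main obstacle.

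Granting $u$, the remainder of your argument is correct and rather clean. For $e = (f\colon x \to y)$ the comma $\infty$-category $\DC \times_{\TwL(\mathcal{C})} \TwL(\mathcal{C})_{/e}$ is, as you say, the right fibration over $\simp$ whose fibre over $[n]$ is the space of factorizations $x \to \sigma(0) \to \cdots \to \sigma(n) \to y$ of $f$; equivalently it is $\simp_{/\mathcal{C}_{x//y}}$, the $\infty$-category of simplices of the double slice, whose classifying space agrees with that of $\mathcal{C}_{x//y}$ by \cref{cor:DCmapscof} and is contractible because $x \xrightarrow{\id} x \xrightarrow{f} y$ is an initial object --- this is your extra-degeneracy argument in disguise. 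For comparison, the paper also invokes Theorem A, but only for the map $\epsilon\colon\simp\to\simp$ (\cref{epscoinit}), where the comma categories reduce to the posets $\TwL[n]$ with terminal object $(0,n)$, and coinitiality is then transported to $\epsilon_{\mathcal{C}}$ by pulling back along a cartesian fibration. Your route buys a single, conceptually transparent cofinality statement at the price of constructing $u$ and unwinding its comma categories; the paper's buys complete formality of all constructions at the price of the less obvious intermediate object $\simp_{/\TwL(\mathcal{C})}$.
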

We introduce the \icat{} of simplices $\simp_{/\mathcal{C}}$ and
define the functor to $\mathcal{C}^{\op} \times \mathcal{C}$ in
\S\ref{sec:simpcat}. The definition of (co)ends in terms of twisted
arrow \icats{} was previously discussed in \cite{GlasmanTHHHodge}*{\S 2}
and \cite{freepres}; we review the definition in \S\ref{sec:tw} and
then prove the comparison.

For functors of \icats{} $W \colon \mathcal{C} \to \mathcal{S}$
and $\phi \colon \mathcal{C} \to \mathcal{D}$, we can define the
limit $\lim_{\mathcal{C}}^{W}\phi$ of $\phi$ \emph{weighted} by $W$ as the end of the functor
\[ \phi^{W} \colon \mathcal{C}^{\op} \times \mathcal{C} \to
  \mathcal{D},\] where $\phi(c)^{W(c')}$ denotes the limit over
$W(c')$ of the constant diagram with value $\phi(c)$, provided
$\mathcal{D}$ admits such limits. In \S\ref{sec:weight} we will prove
an alternative description of such weighted limits:
\begin{thm}
  Let $p \colon \mathcal{W} \to \mathcal{C}$ be the left fibration
  corresponding to the functor $W$. Then there is an equivalence
  \[ \lim^{W}_{\mathcal{C}} \phi \simeq \lim_{\mathcal{W}} \phi
    \circ p,\]
  provided either limit exists in $\mathcal{D}$.
\end{thm}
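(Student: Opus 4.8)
The plan is to show that both sides represent the same presheaf on $\mathcal{D}$ and then apply the Yoneda lemma. Set $\psi_{d} := \mathrm{Map}_{\mathcal{D}}(d, \phi(-)) \colon \mathcal{C} \to \mathcal{S}$ for $d \in \mathcal{D}$. I will prove that each of $\lim^{W}_{\mathcal{C}} \phi$ and $\lim_{\mathcal{W}}(\phi \circ p)$, whenever it exists, has mapping space from $d$ naturally equivalent to $\mathrm{Map}_{\mathrm{Fun}(\mathcal{C},\mathcal{S})}(W, \psi_{d})$. Since the limit of a diagram $G$ in $\mathcal{D}$ exists exactly when the presheaf $d \mapsto \lim \mathrm{Map}_{\mathcal{D}}(d, G(-))$ is representable, and is then a representing object for it, this simultaneously yields the natural equivalence and the statement that one of the two limits exists as soon as the other does.

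For the left-fibration side, the key input is the following identification, which I would record as a lemma: for every $\psi \colon \mathcal{C} \to \mathcal{S}$ and $p \colon \mathcal{W} \to \mathcal{C}$ the left fibration classifying $W$, there is a natural equivalence $\lim_{\mathcal{W}}(\psi \circ p) \simeq \mathrm{Map}_{\mathrm{Fun}(\mathcal{C},\mathcal{S})}(W, \psi)$. This follows formally from straightening: under the fully faithful embedding $\mathrm{Fun}(\mathcal{C},\mathcal{S}) \simeq \mathrm{LFib}(\mathcal{C}) \hookrightarrow (\mathrm{Cat}_{\infty})_{/\mathcal{C}}$, writing $q \colon \mathcal{E} \to \mathcal{C}$ for the left fibration of $\psi$, we get $\mathrm{Map}_{\mathrm{Fun}(\mathcal{C},\mathcal{S})}(W, \psi) \simeq \mathrm{Map}_{(\mathrm{Cat}_{\infty})_{/\mathcal{C}}}(\mathcal{W}, \mathcal{E}) \simeq \Gamma(\mathcal{W}; p^{*}\mathcal{E})$, the space of sections of the left fibration $p^{*}\mathcal{E} \to \mathcal{W}$; since $p^{*}\mathcal{E}$ classifies $\psi \circ p$ and the space of sections of a left fibration $\mathcal{F} \to \mathcal{W}$ classifying $\chi$ is $\mathrm{Map}_{\mathrm{Fun}(\mathcal{W},\mathcal{S})}(\mathrm{const}_{*}, \chi) \simeq \lim_{\mathcal{W}} \chi$ (the object $\mathrm{const}_{*}$ corresponds to $\mathrm{id}_{\mathcal{W}}$, and one invokes the adjunction $\mathrm{const} \dashv \lim$), this space is $\lim_{\mathcal{W}}(\psi \circ p)$. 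As $\mathrm{Map}_{\mathcal{D}}(d, -)$ preserves limits, the lemma applied to $\psi = \psi_{d}$ gives $\mathrm{Map}_{\mathcal{D}}(d, \lim_{\mathcal{W}}(\phi \circ p)) \simeq \lim_{\mathcal{W}}(\psi_{d} \circ p) \simeq \mathrm{Map}_{\mathrm{Fun}(\mathcal{C},\mathcal{S})}(W, \psi_{d})$.

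For the weighted-limit side I again use that $\mathrm{Map}_{\mathcal{D}}(d, -)$, being corepresentable, preserves all limits — hence ends — and in particular the limits over the spaces $W(c')$ defining $\phi^{W}$; combining this with $\lim_{X} \mathrm{const}_{A} \simeq \mathrm{Map}_{\mathcal{S}}(X, A)$ for $X, A \in \mathcal{S}$ shows that $\mathrm{Map}_{\mathcal{D}}(d, \phi^{W}(-,-))$ is the functor $\mathcal{C}^{\op} \times \mathcal{C} \to \mathcal{S}$ given by $\mathrm{Map}_{\mathcal{S}}(W(-), \psi_{d}(-))$, so that $\mathrm{Map}_{\mathcal{D}}(d, \lim^{W}_{\mathcal{C}} \phi)$ is the end of $\mathrm{Map}_{\mathcal{S}}(W(-), \psi_{d}(-))$. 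By the standard description of mapping spaces in functor \icats{} as ends — which, by the equivalence of the two definitions of ends established above, may be taken over either $\TwL(\mathcal{C})$ or $\simp_{/\mathcal{C}}$ — this end is $\mathrm{Map}_{\mathrm{Fun}(\mathcal{C},\mathcal{S})}(W, \psi_{d})$. Together with the previous paragraph, this finishes the proof.

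The assembly is a routine chain of adjunctions and of the limit-preservation of corepresentable functors; the real content sits in the two standard facts invoked along the way — that the space of sections of a left fibration computes the limit of the classified functor, and that a mapping space in a functor \icat{} is the end $\int_{c} \mathrm{Map}_{\mathcal{S}}(F(c), G(c))$ — and in making sure their variance conventions line up with ours. Concretely, one must check that the left-versus-right twisted arrow \icat{} (equivalently, slices versus coslices) underlying the end is the one compatible with the definition of $\phi^{W}$ and with the choice of $p$ as the \emph{left} fibration of $W$, so that the two ends are of the same functor rather than of a twist of it; this is the one point I would verify carefully.
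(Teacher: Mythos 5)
Your proposal is correct and follows essentially the same route as the paper's own proof: both reduce to the $\mathcal{S}$-valued case via corepresentability, identify the weighted limit of a space-valued functor with a mapping space in $\Fun(\mathcal{C},\mathcal{S})$ using Glasman's end formula (\cref{thm:natend}), and then identify that mapping space with the limit over $\mathcal{W}$ via the space of sections of a pulled-back left fibration. The only cosmetic difference is that you unwind the definition of $\phi^{W}$ explicitly where the paper simply invokes the universal mapping property of the weighted limit.
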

As a consequence of this result, the definition of weighted limits in
terms of ends agrees with that introduced by
Rovelli~\cite{RovelliWeight}. Moreover, this identifies the (co)end of
a functor
$F \colon \mathcal{C}^{\op} \times \mathcal{C} \to \mathcal{D}$ as the
(co)limit of $F$ weighted by the mapping space functor
$\Map_{\mathcal{C}}(\blank,\blank)$. We also identify the weighted
colimit functor
$\colim^{W}_{\mathcal{C}^{\op}} \colon \mathcal{P}(\mathcal{C}) \to
\mathcal{S}$ as the unique colimit-preserving functor extending
$W \colon \mathcal{C} \to \mathcal{S}$, while for $\mathcal{D}$ a
presentable \icat{} the weighted colimit functor can be identfied with
the presentable tensor product of this with $\id_{\mathcal{C}}$. This
characterization also specializes to give a further description of
coends.

\subsection*{Acknowledgments}
I thank Clark Barwick for introducing me to twisted arrow categories
and Grigory Kondyrev for decreasing my ignorance about weighted
colimits. I also thank Espen Auseth Nielsen for helpful discussions about
coends back in 2017, Moritz Groth for telling me the derivation of
the Bousfield--Kan formula for homotopy colimits given in \cref{BKform} at
some even earlier date, and Damien Lejay for pointing out the
characterization of the coend functor proved in \cref{cor:coendpsh}.

\section{(Co)ends via the $\infty$-Category of Simplices}\label{sec:simpcat}
In this section we define the \icat{} of simplices
$\simp_{/\mathcal{C}}$ of an \icat{} $\mathcal{C}$, and prove that
this has a canonical functor to $\mathcal{C}^{\op} \times
\mathcal{C}$, which allows us to give our first definition of
(co)ends.

\begin{defn}
  If $\mathcal{C}$ is an \icat{}, its \emph{$\infty$-category of
    simplices} $\DC$ is defined by the pullback square
  \dnlcsquare{\DC}{\Cat_{\infty/\mathcal{C}}}{\simp}{\CatI,}
  where the lower horizontal map is the usual embedding of $\simp$ in
  $\CatI$ (taking the ordered set $[n]$ to the corresponding
  category). Since $\Cat_{\infty/\mathcal{C}} \to \CatI$ is a right
  fibration, so is the projection $\DC \to \simp$.
\end{defn}

\begin{remark}
  The functor $\Dop \to \mathcal{S}$ corresponding to the right
  fibration $\DC \to \simp$ is given by
  \[ [n] \mapsto \Map_{\CatI}([n], \mathcal{C}).\]
  Thus this simplicial space is the \icat{} $\mathcal{C}$ viewed as a
  (complete) Segal space.
\end{remark}

\begin{warning}
  If $\mathcal{C}$ is an ordinary category, then
  $\simp_{/\mathcal{C}}$ as we have defined it here is not quite the
  category of simplices we discussed in the introduction, but a variant
  where the morphisms are triangles that commute up to a specified
  natural isomorphism. In other words, for us $\simp_{/\mathcal{C}}
  \to \simp$ is the fibration corresponding to the functor that takes
  $[n]$ to the \emph{groupoid} of functors $[n] \to \mathcal{C}$, rather
  than the \emph{set} of such functors.
\end{warning}

\begin{propn}\label{propn:pshrfibcolim}
  Suppose $\mathcal{I}$ is a small \icat{} and $\phi \colon
  \mathcal{I}^{\op} \to \mathcal{S}$ is the presheaf corresponding to
  a right fibration $p \colon \mathcal{E} \to \mathcal{I}$. Then
  $\phi$ is the colimit of the composite functor
  \[ \mathcal{E} \xto{p} \mathcal{I} \xto{\Yo} \Fun(\mathcal{I}^{\op},
    \mathcal{S}),\]
  where $\Yo$ is the Yoneda embedding.
\end{propn}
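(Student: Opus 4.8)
The plan is to treat this as a form of the density of the Yoneda embedding and to prove it by a mapping-space computation. First I would identify the setup via unstraightening: since $p$ is classified by $\phi$, and the slice projection $\mathcal{P}(\mathcal{I})_{/\phi}\to\mathcal{P}(\mathcal{I})$ is a right fibration whose pullback along $\Yo\colon\mathcal{I}\to\mathcal{P}(\mathcal{I})$ has fibre $\Map_{\mathcal{P}(\mathcal{I})}(\Yo(i),\phi)\simeq\phi(i)$ over $i$ (Yoneda lemma), there is an equivalence $\mathcal{E}\simeq\mathcal{I}\times_{\mathcal{P}(\mathcal{I})}\mathcal{P}(\mathcal{I})_{/\phi}$ over $\mathcal{I}$ under which $\Yo\circ p$ becomes the composite $\mathcal{E}\to\mathcal{P}(\mathcal{I})_{/\phi}\to\mathcal{P}(\mathcal{I})$. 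The tautological cocone on the slice (each object $\psi\to\phi$ mapping to $\phi$) restricts along this to a cocone $\Yo\circ p\Rightarrow\mathrm{const}_{\phi}$, hence to a comparison map $\theta\colon\colim_{\mathcal{E}}(\Yo\circ p)\to\phi$, the colimit existing since $\mathcal{P}(\mathcal{I})$ is cocomplete; unwound, $\theta$ records that a point of the fibre $\mathcal{E}_{i}\simeq\phi(i)$ is the same datum as a map $\Yo(i)\to\phi$.

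Then I would show $\theta$ is an equivalence by mapping out of it: it suffices that for every $\psi\in\mathcal{P}(\mathcal{I})$ the induced map $\Map_{\mathcal{P}(\mathcal{I})}(\phi,\psi)\to\lim_{\mathcal{E}^{\op}}(\psi\circ p^{\op})$ be an equivalence, having rewritten $\Map(\colim_{\mathcal{E}}(\Yo\circ p),\psi)\simeq\lim_{\mathcal{E}^{\op}}\Map(\Yo(p(-)),\psi)\simeq\lim_{\mathcal{E}^{\op}}(\psi\circ p^{\op})$ via Yoneda. The key input is that $p^{\op}\colon\mathcal{E}^{\op}\to\mathcal{I}^{\op}$ is the left fibration classified by $\phi$, together with the standard computation that for a left fibration $q$ classified by $G$ the left Kan extension $q_{!}(\ast)$ along $q$ of the terminal functor $\ast$ (the constant functor at a point) equals $G$: pointwise $q_{!}(\ast)(j)\simeq\colim_{\mathcal{E}^{\op}\times_{\mathcal{I}^{\op}}(\mathcal{I}^{\op})_{/j}}\ast$, the comma $\infty$-category maps to $(\mathcal{I}^{\op})_{/j}$ by a left fibration, the inclusion of the fibre over the terminal object $\id_{j}$ is cofinal, and hence $q_{!}(\ast)(j)\simeq\colim_{\mathcal{E}^{\op}_{j}}\ast\simeq\mathcal{E}^{\op}_{j}\simeq G(j)$. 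Granting this, the left Kan extension adjunction yields $\Map_{\mathcal{P}(\mathcal{I})}(\phi,\psi)\simeq\Map(p^{\op}_{!}\ast,\psi)\simeq\Map_{\Fun(\mathcal{E}^{\op},\mathcal{S})}(\ast,\psi\circ p^{\op})\simeq\lim_{\mathcal{E}^{\op}}(\psi\circ p^{\op})$, the last step because $\mathrm{const}\colon\mathcal{S}\to\Fun(\mathcal{E}^{\op},\mathcal{S})$ is left adjoint to $\lim$; I would then check that this composite equivalence is the map induced by $\theta$.

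The hard part will be coherence bookkeeping rather than any conceptual point: making the identification $\mathcal{E}\simeq\mathcal{I}\times_{\mathcal{P}(\mathcal{I})}\mathcal{P}(\mathcal{I})_{/\phi}$ precise together with its structure maps and the cocone, and verifying that the comparison map extracted from the tautological slice-cocone matches the one produced by the Kan-extension adjunction. The ``$q_{!}(\ast)\simeq G$'' statement and the cofinality of the fibre inclusion $\mathcal{E}^{\op}_{j}\hookrightarrow\mathcal{E}^{\op}\times_{\mathcal{I}^{\op}}(\mathcal{I}^{\op})_{/j}$ (the fibre over a terminal object of a left, equivalently coCartesian, fibration) are standard, so I would either include short proofs or cite them; as a sanity check, for $\mathcal{I}$ an ordinary category this specializes to the classical description of a presheaf as the colimit of representables indexed by its category of elements. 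Alternatively the proposition follows directly from the density of the Yoneda embedding as recorded in the standard references on $\infty$-categories.
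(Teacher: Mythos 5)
Your proposal is correct, and its overall architecture matches the paper's: both arguments corepresent the putative colimit, reducing everything to a natural equivalence $\Map_{\mathcal{P}(\mathcal{I})}(\phi,\psi) \simeq \lim_{\mathcal{E}^{\op}} (\psi \circ p^{\op})$, which the Yoneda lemma then converts into the universal property of $\colim_{\mathcal{E}} \Yo \circ p$. Where you genuinely diverge is in how that key identity is established. The paper stays entirely on the fibrational side: unstraightening gives $\Map(\phi,\psi) \simeq \Map_{/\mathcal{I}}(\mathcal{E},\mathcal{F}) \simeq \Map_{/\mathcal{E}}(\mathcal{E}, \mathcal{E}\times_{\mathcal{I}}\mathcal{F})$, and the space of sections of the right fibration $\mathcal{E}\times_{\mathcal{I}}\mathcal{F} \to \mathcal{E}$ is identified with $\lim_{\mathcal{E}^{\op}}(\psi\circ p^{\op})$ by \cite{HTT}*{Corollary 3.3.3.4}. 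You instead prove $p^{\op}_{!}\ast \simeq \phi$ via the pointwise formula for left Kan extensions plus cofinality of the fibre inclusion over a terminal object, and then run the adjunction $p^{\op}_{!} \dashv (p^{\op})^{*}$. Both are sound; the paper's route is shorter because it needs only the naturality of unstraightening and one citation, whereas yours needs two auxiliary standard facts (the cofinality claim and $\colim_{K}\ast \simeq K$ for an \igpd{} $K$) but has the virtue of exhibiting $\phi$ explicitly as a Kan extension, which is a reusable statement in its own right. One economy you could make: since $\mathcal{P}(\mathcal{I})$ is cocomplete and a natural equivalence of corepresented functors already identifies the corepresenting objects, the explicit construction of the comparison map $\theta$ from the tautological cocone on $\mathcal{P}(\mathcal{I})_{/\phi}$, and the check that it agrees with the adjunction equivalence, can be dispensed with entirely --- this is exactly the bookkeeping the paper's formulation avoids.
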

\begin{proof}
This is essentially \cite{HTT}*{Lemma 5.1.5.3}, but we include a proof.
Suppose $\psi$ is another presheaf on $\mathcal{I}$, corresponding to
a right fibration $\mathcal{F} \to \mathcal{I}$. Then unstraightening
gives a natural equivalence
 \[ \Map_{\Fun(\mathcal{I}^{\op}, \mathcal{S})}(\phi, \psi)
   \simeq
   \Map_{/\mathcal{I}}(\mathcal{E}, \mathcal{F}) \simeq
   \Map_{/\mathcal{E}}(\mathcal{E}, \mathcal{E} \times_{\mathcal{I}}
   \mathcal{F}).\]
 Here $\mathcal{E} \times_{\mathcal{I}}
 \mathcal{F} \to \mathcal{E}$ is the right fibration for the
 composite functor $\mathcal{E}^{\op} \xto{p^{\op}}
 \mathcal{I}^{\op} \xto{\psi} \mathcal{S}$, and we can identify 
 its \igpd{} of sections with the limit of this functor by \cite{HTT}*{Corollary
   3.3.3.4}. Thus we have a natural equivalence
  \[ \Map_{\Fun(\mathcal{I}^{\op}, \mathcal{S})}(\phi, \psi)
   \simeq \lim_{\mathcal{I}^{\op}} \psi \circ p^{\op} \simeq 
\lim_{\mathcal{I}^{\op}} \Map_{\Fun(\mathcal{I}^{\op},
  \mathcal{S})}(\Yo \circ p, \psi),\]
where the second equivalence follows from the Yoneda lemma. This shows
that $\phi$ has the universal property of the colimit
$\colim_{\mathcal{I}} \Yo \circ p$, as required.
\end{proof}

\begin{cor}
  The \icat{} $\mathcal{C}$ is the colimit of the composite functor
  \[ \DC \to \simp \to \CatI.\]
\end{cor}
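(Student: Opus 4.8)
The plan is to deduce this directly from \cref{propn:pshrfibcolim} by taking $\mathcal{I} = \simp$ and exhibiting $\mathcal{C}$, viewed as an object of $\Fun(\Dop, \mathcal{S})$ via the complete Segal space picture, as the presheaf corresponding to the right fibration $\DC \to \simp$. The remark following the definition of $\DC$ already identifies the presheaf of this right fibration with $[n] \mapsto \Map_{\CatI}([n], \mathcal{C})$, i.e. the complete Segal space associated to $\mathcal{C}$. So applying \cref{propn:pshrfibcolim} immediately gives that this simplicial space is the colimit in $\Fun(\Dop, \mathcal{S})$ of the composite $\DC \xto{p} \simp \xto{\Yo} \Fun(\Dop, \mathcal{S})$, where $p$ is the projection and $\Yo$ is the Yoneda embedding sending $[n]$ to $\Map_{\simp}(\blank, [n])$.

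The remaining point is to transport this colimit statement along the equivalence between complete Segal spaces and \icats{}, i.e.\ along the fully faithful right adjoint $\CatI \hookrightarrow \Fun(\Dop, \mathcal{S})$ (with left adjoint a localization). First I would observe that under this embedding the representable presheaf $\Yo([n])$ corresponds precisely to the \icat{} $[n]$ itself, so that $\Yo \circ p$ is identified with the composite $\DC \to \simp \to \CatI$ followed by the inclusion $\CatI \hookrightarrow \Fun(\Dop, \mathcal{S})$. Since $\CatI$ is a localization of $\Fun(\Dop, \mathcal{S})$ and $\mathcal{C}$ lies in the local subcategory, the colimit of $\Yo \circ p$ computed in $\Fun(\Dop, \mathcal{S})$ — which we just showed is $\mathcal{C}$ — is automatically a colimit in $\CatI$ as well, because the localization functor preserves colimits and is the identity on local objects. (Equivalently: a left adjoint preserves colimits, and here the left adjoint takes the diagram, which already lands in $\CatI$, to itself.) Hence $\mathcal{C} \simeq \colim_{\DC} (\DC \to \simp \to \CatI)$.

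I do not expect any serious obstacle; the content is entirely in \cref{propn:pshrfibcolim} plus the standard fact that $\CatI$ is an accessible localization of $\Fun(\Dop, \mathcal{S})$ (complete Segal spaces) with the representables $[n]$ among the local objects. The one point requiring a little care is the identification of $\Yo([n]) \in \Fun(\Dop, \mathcal{S})$ with the \icat{} $[n] \in \CatI$ under the embedding — i.e.\ that the nerve/complete Segal space of the poset $[n]$ is the representable functor $\Map_{\simp}(\blank, [n])$ — but this is immediate from the fact that $\Map_{\CatI}([m], [n]) \simeq \Map_{\simp}([m], [n])$ is discrete for posets. With that in hand the corollary is just the composite of \cref{propn:pshrfibcolim} with the preservation of colimits by the localization.
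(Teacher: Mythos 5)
Your proposal is correct and follows essentially the same route as the paper: apply \cref{propn:pshrfibcolim} with $\mathcal{I}=\simp$ to identify the complete Segal space of $\mathcal{C}$ as the colimit of $\Yo\circ p$ in $\Fun(\Dop,\mathcal{S})$, then descend to $\CatI$ using that it is a reflective localization whose left adjoint preserves colimits and fixes the already-local object $\mathcal{C}$. The extra care you take in identifying $\Yo([n])$ with the complete Segal space of the poset $[n]$ is a point the paper states without comment, and is handled correctly.
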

\begin{proof}
  By thinking of \icats{} as complete Segal spaces, we can view
  $\CatI$ as a full subcategory of $\Fun(\Dop, \mathcal{S})$, and the
  composite $\Yo \colon \simp \to \CatI \hookrightarrow \Fun(\Dop, \mathcal{S})$ is the
  Yoneda embedding. Since colimits in $\CatI$ can be computed by
  taking colimits in $\Fun(\Dop, \mathcal{S})$ and then localizing, it
  is enough to show that $\mathcal{C}$ is the colimit of the composite
  \[ \DC \to \simp \xto{\Yo} \Fun(\Dop, \mathcal{S}),\]
which follows from \cref{propn:pshrfibcolim}.
\end{proof}

\begin{defn}
  Let $\simp_{*}$ denote the category with objects pairs $([n], i)$
  with $[n] \in \simp$ and $i \in [n]$, and a morphism $([n], i) \to
  ([m], j)$ given by a morphism $\phi \colon [n] \to [m]$ in $\simp$
  such that $\phi(i) \leq j$. Let $\pi \colon \simp_{*} \to
  \simp$ be the obvious projection.
\end{defn}

The functor $\pi$ is the cocartesian fibration for the inclusion
$\simp \hookrightarrow \CatI$; the cocartesian morphisms are the
morphisms over $\phi \colon [n] \to [m]$ of the form
$([n], i) \to ([m], \phi(i))$.  Thus the cocartesian fibration for the
composite $\DC \to \simp \to \CatI$ is
\[\pi_{\mathcal{C}} \colon \DCst := \DC \times_{\simp} \simp_{*} \to
  \DC.\]

\begin{cor}\label{cor:DCstloceq}
  There is a natural equivalence of \icats{}
  \[ \DCst[\txt{cocart}^{-1}] \isoto \mathcal{C}.\]
\end{cor}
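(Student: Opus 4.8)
The plan is to produce an explicit comparison functor and then recognize it as the colimit-as-localization equivalence. The composite $\DC \to \simp \to \CatI$ sends a simplex $\sigma \colon [n] \to \mathcal{C}$ to the category $[n]$, and the simplices themselves assemble into a natural transformation from this functor to the constant functor at $\mathcal{C}$. Unstraightening turns this into a functor over $\DC$ from $\pi_{\mathcal{C}} \colon \DCst \to \DC$ (which, as noted above, is the cocartesian fibration for $\DC \to \simp \to \CatI$) to the projection $\mathcal{C} \times \DC \to \DC$; composing with the other projection gives a functor $e \colon \DCst \to \mathcal{C}$, informally $(\sigma, i) \mapsto \sigma(i)$. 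Since a map of cocartesian fibrations induced by a natural transformation preserves cocartesian morphisms, $e$ carries $\pi_{\mathcal{C}}$-cocartesian morphisms to morphisms of $\mathcal{C} \times \DC$ that are equivalences in the $\mathcal{C}$-coordinate; hence $e$ descends to $\bar{e} \colon \DCst[\txt{cocart}^{-1}] \to \mathcal{C}$, and the claim is that $\bar{e}$ is an equivalence, naturally in $\mathcal{C}$.

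To see this, I would invoke the general fact that, for a functor $F \colon \mathcal{B} \to \CatI$ with cocartesian fibration $p \colon \mathcal{E} \to \mathcal{B}$, inverting the $p$-cocartesian morphisms computes the colimit: $\mathcal{E}[\txt{cocart}^{-1}] \simeq \colim_{\mathcal{B}} F$, naturally, and in such a way that the leg of the colimit cocone at $b \in \mathcal{B}$ is identified with the restriction of $\mathcal{E} \to \mathcal{E}[\txt{cocart}^{-1}] \simeq \colim_{\mathcal{B}} F$ to the fiber $F(b)$ (this is standard; see, e.g., \cite{GHN}). Applied to $\DC \to \simp \to \CatI$ this gives $\DCst[\txt{cocart}^{-1}] \simeq \colim_{\DC}(\DC \to \simp \to \CatI)$, and the right-hand side is identified with $\mathcal{C}$ by the preceding corollary. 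Tracing the cocone legs — at $\sigma \colon [n] \to \mathcal{C}$ the composite equivalence restricts on the fiber $[n]$ to $\sigma$ itself, which is exactly $e$ on that fiber — shows that this composite equivalence is $\bar{e}$.

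There is no real obstacle here beyond bookkeeping: the only things to check carefully are that $e$ is well defined via the unstraightening of the evident natural transformation, that it inverts cocartesian morphisms, and that all the equivalences used (colimit-as-localization, unstraightening, and \cref{propn:pshrfibcolim}) are natural in $\mathcal{C}$, so that the composite is too. The one genuinely external input is the colimit-as-localization statement; if one wanted to avoid citing it, one could instead reprove it in this special case exactly as in the proof of the preceding corollary, passing to complete Segal spaces so as to reduce to the $\mathcal{S}$-valued assertion covered by \cref{propn:pshrfibcolim}.
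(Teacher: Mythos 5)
Your proof is correct and rests on exactly the same two inputs as the paper's: the colimit-as-localization description of colimits in $\CatI$ (the paper cites \cite{HTT}*{\S 3.3.4}) together with the preceding corollary identifying $\colim(\DC \to \simp \to \CatI)$ with $\mathcal{C}$. The explicit comparison functor $e$ and the cocone-leg bookkeeping you supply are a reasonable elaboration, but the paper simply combines the two cited facts and stops there.
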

\begin{proof}
  This follows from the description of colimits in $\CatI$ in
  \cite{HTT}*{\S 3.3.4}: the colimit of a functor $F \colon
  \mathcal{I} \to \CatI$ is given by inverting the cocartesian
  morphisms in the corresponding cocartesian fibration.
\end{proof}

\begin{defn}
  Let $l \colon \simp \to \simp_{*}$ be the section of $\pi$
  defined on objects by $l([n]) = ([n], n)$; since any map $\phi
  \colon [n] \to [m]$ satisfies $\phi(n) \leq m$ this makes
  sense. Note that $\pi l = \id$. Moreover, we have a natural isomorphism
  \[ \Hom_{\simp_{*}}(([n], i), l([m])) \cong \Hom_{\simp}([n],
  [m]),\]
  so that $l$ is right adjoint to $\pi$. Next, we define $\lambda
  \colon \simp_{*} \to \simp$ by $\lambda([n], i) = \{0,\ldots, i\}$;
  if $\phi \colon [n] \to [m]$ satisfies $\phi(i) \leq j$ then $\phi$
  restricts to a map $\{0,\ldots,i\} \to \{0,\ldots,j\}$, which we
  define to be $\lambda(\phi)$. A map $([n], n) \to ([m], i)$ in
  $\simp_{*}$ is determined by a map $[n] \to \lambda([m], i)$ in
  $\simp$, i.e.~we have a natural isomorphism
  \[ \Hom_{\simp_{*}}(l([n]), ([m], i)) \cong \Hom_{\simp}([n],
  \lambda([m], i)),\]
  and so $\lambda$ is right adjoint to $l$. Note also that $\lambda l
  = \id$ and there is a natural transformation $\alpha \colon \lambda
  \to \pi$ given at the object $([n], i)$ by the inclusion
  $\{0,\ldots,i\} \hookrightarrow [n]$. (This can also be defined as
  $\lambda$ applied to the unit transformation $\id \to l \pi$.)
\end{defn}

\begin{lemma}\label{lem:LVloceq}
  Let $\LV$ denote the set of \emph{last-vertex morphisms} in $\simp$,
  \ie{} the maps $\phi \colon [n] \to [m]$
   such that $\phi(n) = m$. Then
  \begin{enumerate}[(i)]
  \item $\lambda$ takes the $\pi$-cocartesian morphisms to morphisms
    in $\LV$,
  \item $l$ takes morphisms in $\LV$ to $\pi$-cocartesian
    morphisms,
  \item the unit map $[n] \to \lambda l([n])$ is in $\LV$ (being
    in fact the identity of $[n]$),
  \item the counit map $l\lambda([n],i) = ([i],i) \to ([n], i)$ is
    $\pi$-cocartesian.
  \end{enumerate}
  Moreover, the adjunction $l \dashv \lambda$ induces an adjoint
  equivalence
  \[ \simp[\LV^{-1}] \mathop{\rightleftarrows}^{\sim}_{\sim} \simp_{*}[\txt{cocart}^{-1}].\]
\end{lemma}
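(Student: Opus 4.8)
The plan is to verify the four compatibilities (i)--(iv) by unwinding the definitions, and then feed them into the standard principle that an adjunction compatible with two chosen classes of morphisms descends to an adjoint equivalence of the associated localizations as soon as its unit and counit become equivalences there. For (i)--(iv) I would argue directly, using that a morphism of $\simp_{*}$ over $\phi\colon[n]\to[m]$ is $\pi$-cocartesian exactly when it has the form $([n],i)\to([m],\phi(i))$. Applying $\lambda$ to such a morphism gives the restriction $\{0,\dots,i\}\to\{0,\dots,\phi(i)\}$ of $\phi$, which sends the top vertex $i$ to the top vertex $\phi(i)$, hence lies in $\LV$ --- this is (i). If $\phi$ lies in $\LV$ then $\phi(n)=m$, so $l(\phi)\colon([n],n)\to([m],m)=([m],\phi(n))$ is of cocartesian form --- this is (ii). Part (iii) is immediate from the identity $\lambda l=\id_{\simp}$, which is clear from the definitions and shows the unit is literally the identity. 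For (iv), one identifies the counit of $l\dashv\lambda$ at $([n],i)$: under the adjunction isomorphism $\Hom_{\simp_{*}}(l([i]),([n],i))\cong\Hom_{\simp}([i],\lambda([n],i))$ it corresponds to $\id_{[i]}$, so the counit $l\lambda([n],i)=([i],i)\to([n],i)$ has underlying $\simp$-morphism the inclusion $\iota\colon\{0,\dots,i\}\hookrightarrow[n]$; as $\iota(i)=i$ this is of cocartesian form $([i],i)\to([n],\iota(i))$, proving (iv). (Reassuringly, $\pi$ applied to this counit recovers the natural transformation $\alpha$.)

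For the ``moreover'' I would test against an arbitrary \icat{} $\mathcal{E}$. Functoriality of $\Fun(\blank,\mathcal{E})$ sends the adjunction $l\dashv\lambda$ to an adjunction $\lambda^{*}\dashv l^{*}$ between $\Fun(\simp,\mathcal{E})$ and $\Fun(\simp_{*},\mathcal{E})$, whose unit and counit are obtained by whiskering with the unit $\eta$ and counit $\epsilon$ of $l\dashv\lambda$. By (i), $\lambda^{*}$ carries the full subcategory of $\LV$-inverting functors into the full subcategory of $\txt{cocart}$-inverting functors, and by (ii), $l^{*}$ does the reverse; since these subcategories are full the restricted pair is again an adjunction. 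Its unit at an $\LV$-inverting $h$ is $h\circ\eta$, a componentwise equivalence because each $\eta_{[n]}\in\LV$ (part (iii)), and its counit at a $\txt{cocart}$-inverting $g$ is $g\circ\epsilon$, a componentwise equivalence because each $\epsilon_{([n],i)}$ is $\pi$-cocartesian (part (iv)); hence the restricted adjunction is an adjoint equivalence. In particular $l^{*}$ restricts to an equivalence from the \icat{} of $\txt{cocart}$-inverting functors $\simp_{*}\to\mathcal{E}$ to that of $\LV$-inverting functors $\simp\to\mathcal{E}$. Via the universal properties of the two localizations this equivalence is restriction along the induced functor $\bar{l}\colon\simp[\LV^{-1}]\to\simp_{*}[\txt{cocart}^{-1}]$, and since it holds for every $\mathcal{E}$, the Yoneda lemma in $\CatI$ shows $\bar{l}$ is an equivalence; finally $\lambda l=\id_{\simp}$ gives $\bar{\lambda}\bar{l}=\id$, so the induced $\bar{\lambda}$ is inverse to $\bar{l}$ and together they give the asserted adjoint equivalence.

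The genuinely routine parts are the checks in (i)--(iv); the only point requiring a little care is pinning down the counit of $l\dashv\lambda$ for (iv), together with verifying that the precomposition adjunction $\lambda^{*}\dashv l^{*}$ and its restriction to the subcategories of marking-inverting functors behave as stated. I do not anticipate a real obstacle, since everything reduces to combinatorics on $\simp$ and $\simp_{*}$ plus the formal behaviour of localizations under $\Fun(\blank,\mathcal{E})$. Should one prefer, the last part of the argument may be replaced by an appeal to the general statement that an adjunction preserving two chosen classes of morphisms descends to the localizations and becomes an adjoint equivalence once its unit and counit are inverted there.
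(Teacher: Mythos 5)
Your proposal is correct and follows the same route as the paper: the paper's proof simply records that (i)--(iv) are immediate from the description of the $\pi$-cocartesian morphisms and that they force the adjunction $l \dashv \lambda$ to descend to the localizations with invertible unit and counit, which is exactly what you verify. Your write-up merely fills in the details the paper leaves implicit --- the combinatorial checks of (i)--(iv) and the standard $\Fun(\blank,\mathcal{E})$ argument for why a marking-compatible adjunction with unit and counit in the marked classes induces an adjoint equivalence of localizations --- and both are carried out correctly.
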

\begin{proof}
  Properties (i)--(iv) are immediate from the definition of the
  $\pi$-cocartesian morphisms, and imply that the adjunction $l \dashv
  \lambda$ descends to the localized \icats{} where the unit and
  counit transformations become natural equivalences.
\end{proof}

We now want to lift this equivalence to $\DC$; we first lift the
adjoint triple:
\begin{propn}
  The adjoint triple $\pi \dashv l \dashv \lambda$ induces for all
  \icats{} $\mathcal{C}$ an adjoint triple of functors
  $\pi_{\mathcal{C}} \dashv l_{\mathcal{C}} \dashv
  \lambda_{\mathcal{C}}$ between $\DCst$ and $\DC$.
\end{propn}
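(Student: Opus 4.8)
The plan is to construct $l_{\mathcal{C}}$ and $\lambda_{\mathcal{C}}$ by pullback and by transport along the right fibration $\DC \to \simp$, and then to recognise the two adjunctions from their (co)units. For $l_{\mathcal{C}}$: since $\pi l = \id_{\simp}$, the pair consisting of $\id_{\DC}$ and the composite $\DC \to \simp \xto{l} \simp_{*}$ defines a functor $l_{\mathcal{C}} \colon \DC \to \DC \times_{\simp} \simp_{*} = \DCst$, which is a section of $\pi_{\mathcal{C}}$. For $\lambda_{\mathcal{C}}$: write $r_{\mathcal{C}} \colon \DCst \to \simp_{*}$ for the projection; whiskering the natural transformation $\alpha \colon \lambda \to \pi$ with $r_{\mathcal{C}}$ yields a transformation $\lambda r_{\mathcal{C}} \to \pi r_{\mathcal{C}}$ between functors $\DCst \to \simp$, and since $\pi_{\mathcal{C}}$ lies over $\pi r_{\mathcal{C}}$ with respect to the right fibration $\DC \to \simp$, transporting $\pi_{\mathcal{C}}$ along this transformation produces a functor $\lambda_{\mathcal{C}} \colon \DCst \to \DC$ lying over $\lambda r_{\mathcal{C}}$, equipped with a natural transformation $\lambda_{\mathcal{C}} \to \pi_{\mathcal{C}}$ over $\alpha r_{\mathcal{C}}$. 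Concretely, on the fibre $\Map_{\CatI}([n], \mathcal{C})$ of $\DCst$ over $([n], i)$, the functor $\lambda_{\mathcal{C}}$ is restriction along $\{0, \ldots, i\} \hookrightarrow [n]$ into the fibre $\Map_{\CatI}(\{0,\ldots,i\}, \mathcal{C})$ of $\DC$ over $\{0,\ldots,i\} = \lambda([n],i)$. Because $\lambda l = \id$ and the component of $\alpha$ at $l([n]) = ([n],n)$ is the identity of $[n]$, we get $\lambda_{\mathcal{C}} l_{\mathcal{C}} = \id_{\DC}$; thus both composites $\pi_{\mathcal{C}} l_{\mathcal{C}}$ and $\lambda_{\mathcal{C}} l_{\mathcal{C}}$ are canonically the identity.

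For the adjunctions I would invoke the recognition criterion for units and counits (\cite{HTT}*{Proposition 5.2.2.8} and its dual). Taking the counit of $\pi_{\mathcal{C}} \dashv l_{\mathcal{C}}$ and the unit of $l_{\mathcal{C}} \dashv \lambda_{\mathcal{C}}$ to be the canonical identities above, it suffices to show that $\pi_{\mathcal{C}}$ induces equivalences $\Map_{\DCst}(c, l_{\mathcal{C}} x) \xto{\sim} \Map_{\DC}(\pi_{\mathcal{C}} c, x)$ and that $\lambda_{\mathcal{C}}$ induces equivalences $\Map_{\DCst}(l_{\mathcal{C}} x, c) \xto{\sim} \Map_{\DC}(x, \lambda_{\mathcal{C}} c)$, naturally in $x \in \DC$ and $c \in \DCst$. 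Now $\DCst \to \simp_{*}$ and $\DC \to \simp$ are right fibrations whose classifying presheaves send $([n],i) \in \simp_{*}$ and $[n] \in \simp$ both to $\Map_{\CatI}([n], \mathcal{C})$, so for objects $e, e'$ of a right fibration $E \to B$ lying over $b, b'$ there is a pullback description $\Map_{E}(e, e') \simeq \{e\} \times_{\Map_{\CatI}([n],\mathcal{C})} \Map_{B}(b, b')$ in which $\Map_{B}(b,b')$ maps to the fibre by transporting $e'$. Under these descriptions the two maps in question become the hom-set bijections $\Hom_{\simp_{*}}(([m],j), l([n])) \cong \Hom_{\simp}([m],[n])$ and $\Hom_{\simp_{*}}(l([n]), ([m],j)) \cong \Hom_{\simp}([n], \{0,\ldots,j\})$ that define $\pi \dashv l$ and $l \dashv \lambda$, base-changed over $\Map_{\CatI}([n],\mathcal{C})$; the compatibility with the transport maps holds because $\pi(\phi) = \phi$ in the first case and $(\{0,\ldots,j\} \hookrightarrow [m]) \circ \lambda(\phi) = \phi$ in the second, so both maps are equivalences and the adjunctions follow. (Alternatively, $\pi_{\mathcal{C}} \dashv l_{\mathcal{C}}$ is immediate from base change: since $\pi$ carries the unit of $\pi \dashv l$ to an identity, the adjunction $\pi \dashv l$ lies over $\simp$, with $\simp_{*}$ regarded over $\simp$ via $\pi$, and pulling back along $\DC \to \simp$ produces $\pi_{\mathcal{C}} \dashv l_{\mathcal{C}}$.)

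Everything above is natural in $\mathcal{C}$, as pullback and the right-fibration transport are functorial, so the adjoint triple is obtained uniformly. The main obstacle, I expect, is the asymmetry that forces the two-step construction: no structure map on $\simp_{*}$ makes all three of $\pi$, $l$, $\lambda$ into functors over $\simp$ (because $\lambda \ne \pi$), so the entire adjoint triple cannot be produced by a single base change, and $\lambda_{\mathcal{C}}$ together with $l_{\mathcal{C}} \dashv \lambda_{\mathcal{C}}$ has to be built through $\alpha$. The delicate points are then to pin $\lambda_{\mathcal{C}}$ down precisely enough---the fibrewise restriction description, which is what the transport along $\alpha$ encodes---and to check that the two hom-set bijections really are compatible with the transport maps of the right fibrations; the remainder is formal.
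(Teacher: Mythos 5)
Your construction of $l_{\mathcal{C}}$ (as the pullback of the section $l$ using $\pi l = \id$) and of $\lambda_{\mathcal{C}}$ (as the cartesian transport of $\pi_{\mathcal{C}}$ along the whiskering of $\alpha \colon \lambda \to \pi$) is exactly the paper's construction, and your verification of both adjunctions --- reducing the relevant mapping spaces of $\DCst$ and $\DC$ to the hom-sets of $\Dst$ and $\simp$ via the right-fibration structure, so that the $1$-categorical bijections for $\pi \dashv l$ and $l \dashv \lambda$ lift --- is the same mechanism the paper uses (it phrases the reduction as a pair of cartesian squares of mapping spaces rather than your fibre-product formula, and checks $\pi_{\mathcal{C}} \dashv l_{\mathcal{C}}$ by pulling back the unit and triangle identities, but these are cosmetic differences). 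The proposal is correct.
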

\begin{proof}
  Since $\pi l = \id$, pulling back $l$ gives a commutative diagram
  \[
    \begin{tikzcd}
      \DC \arrow{r}{l_{\mathcal{C}}} \arrow{d} \arrow[bend left,equals]{rr} & \DCst
      \arrow{r}{\pi_{\mathcal{C}}} \arrow{d} & \DC \arrow{d} \\
      \simp \arrow{r}{l} \arrow[bend right,equals]{rr} & \Dst \arrow{r}{\pi} & \simp,
    \end{tikzcd}
  \]
  where both squares are cartesian. The unit transformation $\id \to l
  \pi$ pulls back similarly, and the adjunction identities hold since
  they lie over equivalences in $\simp$ and $\Dst$ and right
  fibrations are conservative.

  We define $\lambda_{\mathcal{C}} \colon \DCst \to \DC$ by taking the
  cartesian pullback of $\alpha \colon \lambda \to \pi$, which gives a
  filler in the diagram
  \[
    \begin{tikzcd}
      \DCst \times \{1\} \arrow{r}{\pi_{\mathcal{C}}} \arrow[hookrightarrow]{d} & \DC
      \arrow{dd} \\
      \DCst \times \Delta^{1} \arrow{d}
      \arrow[dashed]{ur}{\alpha_{\mathcal{C}}} \\
      \Dst \times \Delta^{1} \arrow{r}{\alpha} & \simp
    \end{tikzcd}
  \]
  where the value of $\alpha_{\mathcal{C}}$ at an object $X \in \DCst$
  over $([n],i)$ in $\Dst$ is the
  cartesian morphism in $\DC$ over $\alpha_{([n],i)}$ with target
  $\pi_{\mathcal{C}}X$. Since $\alpha$ restricts to the identity
  transformation along $l$, it follows that $\lambda_{\mathcal{C}}
  \circ l_{\mathcal{C}} \simeq \id$.

  The natural transformation $l\alpha \colon l\lambda \to l \pi$ factors
  as the composite $l \lambda \to \id \to l \pi$ of the counit
  transformation for $l \dashv \lambda$ and the unit transformation for
  $\pi \dashv l$. Hence $l_{\mathcal{C}}\alpha_{\mathcal{C}}$ factors
  as $l_{\mathcal{C}} \lambda_{\mathcal{C}} \to \id \to
  l_{\mathcal{C}}\pi_{\mathcal{C}}$ where the second morphism is the
  unit for $\pi_{\mathcal{C}} \dashv l_{\mathcal{C}}$, since this is the
  unique transformation over $\id \to l\pi$ with target
  $l_{\mathcal{C}}\pi_{\mathcal{C}}$, as $\DCst \to \Dst$ is a right
  fibration. We claim that the transformation
  $l_{\mathcal{C}}\lambda_{\mathcal{C}} \to \id$ is a counit. To see
  this consider the commutative diagrams
  \[
    \begin{tikzcd}
      \Map_{\DC}(X, \lambda_{\mathcal{C}}Y) \arrow{r} \arrow{d} &
      \Map_{\DCst}(l_{\mathcal{C}}X,
      l_{\mathcal{C}}\lambda_{\mathcal{C}}Y) \arrow{d} \arrow{r} &
      \Map_{\DCst}(l_{\mathcal{C}}X, Y) \arrow{d} \\
      \Map_{\simp}([m], \lambda([n],i)) \arrow{r} &
      \Map_{\Dst}(l[m], l\lambda([n],i)) \arrow{r} & \Map_{\Dst}(l[m],
      ([n],i)). 
    \end{tikzcd}
  \]
for objects $X$ and $Y$ lying over $[m]$ and
  $([n], i)$, respectively. Here the left square is cartesian since
  $l_{\mathcal{C}}$ is a pullback of $l$, and the right square is
  cartesian since $\DCst \to \Dst$ is a right fibration (and hence the
  morphism $l_{\mathcal{C}}\lambda_{\mathcal{C}}Y \to Y$ is
  cartesian). The composite in the bottom row is an equivalence,
  as we know that $l$ is left adjoint to $\lambda$, so this implies
  that the composite in the top row is an equivalence, as required.
\end{proof}

\begin{cor}
  Let $\LV_{\mathcal{C}}$ denote the morphisms in $\DC$ that lie over
  last-vertex morphisms in $\simp$. Then
  \begin{enumerate}
  \item $\lambda_{\mathcal{C}}$ takes  $\pi_{\mathcal{C}}$-cocartesian
    morphisms to morphisms in $\LV_{\mathcal{C}}$,
  \item $l_{\mathcal{C}}$ takes morphisms in $\LV_{\mathcal{C}}$ to
    $\pi_{\mathcal{C}}$-cocartesian morphisms,
  \item the unit map $X \to \lambda_{\mathcal{C}}l_{\mathcal{C}}X$ is
    in $\LV_{\mathcal{C}}$ (since it is an equivalence),
  \item the counit map $l_{\mathcal{C}}\lambda_{\mathcal{C}}X \to X$
    is $\pi_{\mathcal{C}}$-cocartesian.
  \end{enumerate}
  Moreover, the adjunction $l_{\mathcal{C}} \dashv
  \lambda_{\mathcal{C}}$ induces an adjoint equivalence
  \[ \DC[\LV^{-1}] \mathop{\rightleftarrows}^{\sim}_{\sim}
  \DCst[\txt{cocart}^{-1}].\]
\end{cor}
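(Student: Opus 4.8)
The plan is to obtain this as the ``relative over $\mathcal{C}$'' version of \cref{lem:LVloceq}, transferred along the right fibrations $\DC \to \simp$ and $\DCst \to \Dst$ in the same spirit as the preceding proposition deduced the adjoint triple $\pi_{\mathcal{C}} \dashv l_{\mathcal{C}} \dashv \lambda_{\mathcal{C}}$ from $\pi \dashv l \dashv \lambda$. Two facts do all the work. First, since $\pi_{\mathcal{C}} \colon \DCst \to \DC$ is the pullback of the cocartesian fibration $\pi \colon \Dst \to \simp$ along $\DC \to \simp$, its cocartesian morphisms are precisely those whose image in $\Dst$ is $\pi$-cocartesian; second, by definition a morphism of $\DC$ lies in $\LV_{\mathcal{C}}$ if and only if its image in $\simp$ lies in $\LV$. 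In addition I will use that $l_{\mathcal{C}}$ and $\lambda_{\mathcal{C}}$ lie over $l$ and $\lambda$, and that the unit and counit of $l_{\mathcal{C}} \dashv \lambda_{\mathcal{C}}$ lie over those of $l \dashv \lambda$; this is part of the construction in the preceding proposition, using that the right fibration $\DCst \to \Dst$ is conservative. I expect this bookkeeping in the first step to be the only place any care is needed, everything afterwards being formal.

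Granting this, the four properties fall out directly. For (i), a $\pi_{\mathcal{C}}$-cocartesian morphism maps to a $\pi$-cocartesian morphism of $\Dst$ by the first fact, $\lambda$ sends this into $\LV$ by \cref{lem:LVloceq}(i), and $\lambda_{\mathcal{C}}$ of the original morphism lies over it, hence lies in $\LV_{\mathcal{C}}$ by the second fact; property (ii) is the same argument read backwards via \cref{lem:LVloceq}(ii). For (iii), the unit $X \to \lambda_{\mathcal{C}} l_{\mathcal{C}} X$ lies over the unit $[n] \to \lambda l[n] = [n]$ of $l \dashv \lambda$, which is the identity by \cref{lem:LVloceq}(iii); hence it lies in $\LV_{\mathcal{C}}$, and it is moreover an equivalence since $\DC \to \simp$ is conservative. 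For (iv), the counit $l_{\mathcal{C}} \lambda_{\mathcal{C}} X \to X$ lies over the counit $l\lambda([n],i) = ([i],i) \to ([n],i)$ of $l \dashv \lambda$, which is $\pi$-cocartesian by \cref{lem:LVloceq}(iv), so the counit is $\pi_{\mathcal{C}}$-cocartesian by the first fact.

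Finally, property (ii) shows $l_{\mathcal{C}}$ carries $\LV_{\mathcal{C}}$ into the $\pi_{\mathcal{C}}$-cocartesian morphisms and hence descends to a functor $\DC[\LV^{-1}] \to \DCst[\txt{cocart}^{-1}]$, while property (i) shows $\lambda_{\mathcal{C}}$ descends to a functor in the opposite direction. Since $l_{\mathcal{C}} \dashv \lambda_{\mathcal{C}}$, the descended functors remain adjoint, and by (iii) and (iv) the unit and counit of this adjunction become natural equivalences after localization — the unit is already inverted in $\DC[\LV^{-1}]$ and the counit in $\DCst[\txt{cocart}^{-1}]$ — so we obtain the asserted adjoint equivalence.
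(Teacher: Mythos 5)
Your proposal is correct and follows essentially the same route as the paper: identify the $\pi_{\mathcal{C}}$-cocartesian morphisms as those lying over $\pi$-cocartesian morphisms in $\Dst$, note that $l_{\mathcal{C}}$, $\lambda_{\mathcal{C}}$ and their unit and counit lie over $l$, $\lambda$ and theirs, and then read off (i)--(iv) and the localized adjoint equivalence from \cref{lem:LVloceq}. The paper compresses this into a single sentence, but the content is the same.
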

\begin{proof}
  The $\pi_{\mathcal{C}}$-cocartesian morphisms are precisely the
  morphisms in $\DCst$ that lie over $\pi$-cocartesian morphisms in
  $\Dst$, so this follows from \cref{lem:LVloceq} and the fact that
  the unit and counit for $l_{\mathcal{C}} \dashv
  \lambda_{\mathcal{C}}$ lie over the unit and counit for $l \dashv \lambda$.
\end{proof}

Combining this with the equivalence of \cref{cor:DCstloceq}, we have proved:
\begin{propn}
  There is a natural equivalence of \icats{}
  \[ \DC[\LV^{-1}] \simeq \mathcal{C},\]
  and hence a natural transformation $\mathfrak{L}_{\mathcal{C}}
  \colon  \DC \to \mathcal{C}$.\qed
\end{propn}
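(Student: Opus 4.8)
The plan is to simply chain together the two equivalences established above. By the preceding corollary the adjunction $l_{\mathcal{C}} \dashv \lambda_{\mathcal{C}}$ descends to an adjoint equivalence $\DC[\LV^{-1}] \simeq \DCst[\txt{cocart}^{-1}]$, while \cref{cor:DCstloceq} provides an equivalence $\DCst[\txt{cocart}^{-1}] \isoto \mathcal{C}$. Composing the two yields the asserted equivalence $\DC[\LV^{-1}] \simeq \mathcal{C}$, so the only real work is to check that this composite is natural in $\mathcal{C}$ and to extract the functor $\mathfrak{L}_{\mathcal{C}}$.

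For naturality I would argue as follows. The assignments $\mathcal{C} \mapsto \DC$ and $\mathcal{C} \mapsto \DCst$, together with the marked morphisms $\LV_{\mathcal{C}}$ and the $\pi_{\mathcal{C}}$-cocartesian morphisms, are functorial in $\mathcal{C}$: they are obtained by base change along $\Cat_{\infty/\mathcal{C}} \to \CatI$ (respectively along its variant over $\Dst$), and localization at a functorial class of morphisms is again functorial. Likewise the adjoint equivalence of the previous corollary came entirely from the adjunction $l_{\mathcal{C}} \dashv \lambda_{\mathcal{C}}$, which was itself produced by pulling back the fixed adjunction $l \dashv \lambda$ on $\simp$ and $\Dst$ along $\mathcal{C}$; hence it is an equivalence of functors, not just a pointwise equivalence. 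Finally, the equivalence of \cref{cor:DCstloceq} is natural because it is a consequence of the general description of colimits in $\CatI$ from \cite{HTT}*{\S 3.3.4}, applied to the natural diagram $\DC \to \simp \to \CatI$. Composing these natural equivalences gives a natural equivalence $\DC[\LV^{-1}] \simeq \mathcal{C}$.

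The natural transformation $\mathfrak{L}_{\mathcal{C}} \colon \DC \to \mathcal{C}$ is then defined as the composite of the localization functor $\DC \to \DC[\LV^{-1}]$ with this equivalence; naturality of $\mathfrak{L}$ follows since the localization map is natural in $\mathcal{C}$. The one point requiring a little care — and the only real obstacle — is precisely this bookkeeping of naturality: one must make sure that each construction in the chain is genuinely a morphism of functors of $\mathcal{C}$ rather than merely defined object by object. But since every step above was obtained by base change along $\mathcal{C}$ of a single fixed structure on $\simp$ and $\Dst$, this is automatic, and the proof reduces to recording these observations.
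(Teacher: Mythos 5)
Your proposal is correct and is exactly the paper's argument: the proposition is stated with a \qed precisely because it follows immediately by composing the adjoint equivalence $\DC[\LV^{-1}] \simeq \DCst[\txt{cocart}^{-1}]$ from the preceding corollary with the equivalence $\DCst[\txt{cocart}^{-1}] \isoto \mathcal{C}$ of \cref{cor:DCstloceq}. Your additional remarks on naturality (everything being pulled back from a fixed structure on $\simp$ and $\Dst$) are a reasonable elaboration of what the paper leaves implicit.
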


To obtain a natural map $\DC \to \mathcal{C}^{\op}$, we combine this
with the order-reversing automorphism of $\simp$:
\begin{defn}
  Let $\txt{rev} \colon \simp \to \simp$ be the order-reversing automorphism of
  $\simp$, i.e.~$\txt{rev}([n]) = [n]$ but for $\phi \colon [n] \to
  [m]$ we have $\txt{rev}(\phi)(i) = m-\phi(n-i)$.
\end{defn}
\begin{lemma}
  We have a natural pullback square
  \[\csquare{\simp_{/\mathcal{C}^{\op}}}{\DC}{\simp}{\simp.}{\txt{rev}_{\mathcal{C}}}{}{}{\txt{rev}}\]
  Since $\txt{rev}$ is an equivalence, so is $\txt{rev}_{\mathcal{C}}$.
\end{lemma}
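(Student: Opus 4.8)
The plan is to unwind how $\DC$ is built as a pullback and how $\operatorname{rev}$ acts, and then check that the two pullbacks in question have the same defining data. Recall that $\DC$ is defined by the pullback square
\[
  \begin{tikzcd}
    \DC \arrow{r} \arrow{d} & \Cat_{\infty/\mathcal{C}} \arrow{d} \\
    \simp \arrow{r} & \CatI,
  \end{tikzcd}
\]
where the bottom map is the standard inclusion $\iota \colon \simp \hookrightarrow \CatI$. The first step is to observe that the automorphism $\operatorname{rev} \colon \simp \to \simp$ interacts with this inclusion via the automorphism $(\blank)^{\op}$ of $\CatI$: there is a natural isomorphism $\iota \circ \operatorname{rev} \simeq (\blank)^{\op} \circ \iota$, since the category corresponding to $\operatorname{rev}([n])$ is the opposite of the poset $[n]$, and $\operatorname{rev}$ on morphisms is precisely the functor induced on opposites.

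Next I would combine this with the fact that $(\blank)^{\op} \colon \CatI \to \CatI$ carries the slice $\Cat_{\infty/\mathcal{C}}$ to $\Cat_{\infty/\mathcal{C}^{\op}}$: taking opposites of objects over $\mathcal{C}$ gives an equivalence $\Cat_{\infty/\mathcal{C}} \simeq \Cat_{\infty/\mathcal{C}^{\op}}$ sitting in a square with $(\blank)^{\op} \colon \CatI \to \CatI$ below it. Granting these two compatibilities, the pullback square defining $\simp_{/\mathcal{C}^{\op}}$ can be rewritten: pulling back $\Cat_{\infty/\mathcal{C}^{\op}} \to \CatI$ along $\iota$ is the same, after applying the equivalence $(\blank)^{\op}$ on the right and $\operatorname{rev}$ on the base, as pulling back $\Cat_{\infty/\mathcal{C}} \to \CatI$ along $\iota \circ \operatorname{rev} \simeq \iota$. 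Concretely, this exhibits $\simp_{/\mathcal{C}^{\op}}$ as the pullback of $\DC \to \simp$ along $\operatorname{rev} \colon \simp \to \simp$, which is exactly the asserted square; the map $\operatorname{rev}_{\mathcal{C}}$ is the induced map on pullbacks and the final sentence follows because pullbacks of equivalences are equivalences.

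The main obstacle is purely bookkeeping of coherences rather than anything conceptual: one must produce the natural isomorphism $\iota \circ \operatorname{rev} \simeq (\blank)^{\op} \circ \iota$ and the slice-compatibility square as genuine $2$-commutative (indeed coherently commutative) diagrams, not just objectwise statements, so that the resulting identification of pullbacks is natural in $\mathcal{C}$. Since both $\operatorname{rev}$ and $(\blank)^{\op}$ are strict automorphisms of strict models (of $\simp$ and of a model for $\CatI$ respectively), and the slice construction is functorial, this naturality is straightforward to pin down, so I would state it and leave the diagram-chase to the reader.
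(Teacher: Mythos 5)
Your argument is correct, but it takes the unstraightened route where the paper straightens. The paper's proof is a one-line computation with simplicial spaces: the right fibration $\DC \to \simp$ classifies the functor $[n] \mapsto \Map([n],\mathcal{C})$, so its pullback along $\txt{rev}$ classifies the precomposition with $\txt{rev}^{\op}$, and the resulting simplicial space $[n] \mapsto \Map(\txt{rev}[n],\mathcal{C}) \simeq \Map([n],\mathcal{C}^{\op})$ is precisely the complete Segal space of $\mathcal{C}^{\op}$, whose associated right fibration is $\simp_{/\mathcal{C}^{\op}}$ by definition. You instead paste pullback squares through the defining square of $\DC$ over $\CatI$, using the natural isomorphism $\iota\circ\txt{rev} \simeq (\blank)^{\op}\circ\iota$ together with the equivalence $\Cat_{\infty/\mathcal{C}} \simeq \Cat_{\infty/\mathcal{C}^{\op}}$ lying over $(\blank)^{\op}$. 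Both arguments rest on exactly the same key fact --- the isomorphism $[n]^{\op}\cong[n]$, natural in $[n]\in\simp$, intertwining $\txt{rev}$ with $(\blank)^{\op}$ --- but the straightened version dispenses with the slice-compatibility square and the coherence bookkeeping you flag at the end: naturality in $\mathcal{C}$ is immediate because precomposing with $\txt{rev}^{\op}$ is visibly natural in the functor $\mathcal{C}\colon\Dop\to\mathcal{S}$. Your version never leaves the world of pullback squares, at the small cost of having to certify that the square relating the two slice \icats{} is cartesian (which it is, both horizontal maps being equivalences); either way the conclusion and the final sentence about $\txt{rev}_{\mathcal{C}}$ being an equivalence go through.
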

\begin{proof}
  The pullback of $\DC \to \simp$ along $\txt{rev}$ is the right
  fibration for the composite \[\Dop \xto{\txt{rev}^{\op}} \Dop
  \xto{\mathcal{C}} \mathcal{S}\]
  and this composite is precisely the complete Segal space
  corresponding to $\mathcal{C}^{\op}$.
\end{proof}

Under the equivalence $\txt{rev}$, the last-vertex
morphisms in $\LV$ correspond to the \emph{initial-vertex
  morphisms} $\IV$, \ie{} the maps $\phi \colon [n] \to [m]$ such
that $\phi(0) = 0$. We thus get:
\begin{cor}
  There is a natural equivalence of \icats{}
  \[ \DC[\IV_{\mathcal{C}}^{-1}] \simeq \mathcal{C}^{\op},\]
  where $\IV_{\mathcal{C}}$ are the morphisms in $\DC$ that lie over
  $\IV$. Hence there is a natural transformation
  $\mathfrak{I}_{\mathcal{C}} \colon \DC \to \mathcal{C}^{\op}$.
\end{cor}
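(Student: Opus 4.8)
The plan is to deduce this immediately from the preceding proposition, applied with $\mathcal{C}^{\op}$ in place of $\mathcal{C}$, and then transported along the equivalence $\txt{rev}_{\mathcal{C}}$ of the lemma above. Concretely, applying the natural equivalence $\DC[\LV^{-1}] \simeq \mathcal{C}$ together with its accompanying natural functor $\mathfrak{L}_{\mathcal{C}}$ to the \icat{} $\mathcal{C}^{\op}$ produces a natural equivalence $\simp_{/\mathcal{C}^{\op}}[\LV_{\mathcal{C}^{\op}}^{-1}] \simeq \mathcal{C}^{\op}$ together with a natural functor $\mathfrak{L}_{\mathcal{C}^{\op}} \colon \simp_{/\mathcal{C}^{\op}} \to \mathcal{C}^{\op}$, where $\LV_{\mathcal{C}^{\op}}$ denotes the morphisms of $\simp_{/\mathcal{C}^{\op}}$ lying over $\LV \subseteq \simp$.

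Next I would transport this along the equivalence $\txt{rev}_{\mathcal{C}} \colon \simp_{/\mathcal{C}^{\op}} \isoto \DC$. Because the square exhibiting $\txt{rev}_{\mathcal{C}}$ is cartesian and lies over $\txt{rev} \colon \simp \to \simp$, a morphism of $\simp_{/\mathcal{C}^{\op}}$ projects to $\phi$ in $\simp$ precisely when its image under $\txt{rev}_{\mathcal{C}}$ projects to $\txt{rev}(\phi)$; since $\txt{rev}$ carries $\LV$ bijectively onto $\IV$, the equivalence $\txt{rev}_{\mathcal{C}}$ identifies the class $\LV_{\mathcal{C}^{\op}}$ with the class $\IV_{\mathcal{C}}$. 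An equivalence of \icats{} that matches up two classes of morphisms descends to an equivalence between the corresponding localizations, so $\txt{rev}_{\mathcal{C}}$ induces a natural equivalence $\DC[\IV_{\mathcal{C}}^{-1}] \simeq \simp_{/\mathcal{C}^{\op}}[\LV_{\mathcal{C}^{\op}}^{-1}]$; composing with the equivalence of the previous paragraph gives the desired $\DC[\IV_{\mathcal{C}}^{-1}] \simeq \mathcal{C}^{\op}$. The functor $\mathfrak{I}_{\mathcal{C}}$ is then the composite $\DC \to \DC[\IV_{\mathcal{C}}^{-1}] \simeq \mathcal{C}^{\op}$, which under these identifications agrees with $\mathfrak{L}_{\mathcal{C}^{\op}} \circ \txt{rev}_{\mathcal{C}}^{-1}$.

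Naturality in $\mathcal{C}$ is inherited from the naturality of the equivalence $\DC[\LV^{-1}] \simeq \mathcal{C}$ and of the pullback square defining $\txt{rev}_{\mathcal{C}}$. I do not expect a genuine obstacle here: the only step worth spelling out is the identification of $\LV_{\mathcal{C}^{\op}}$ with $\IV_{\mathcal{C}}$ under $\txt{rev}_{\mathcal{C}}$, and that in turn reduces to the elementary observation — already recorded above — that $\txt{rev}$ exchanges the last-vertex and initial-vertex maps in $\simp$.
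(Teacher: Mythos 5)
Your proposal is correct and is essentially the paper's own (implicit) argument: the paper likewise obtains this corollary by applying the proposition $\DC[\LV^{-1}] \simeq \mathcal{C}$ to $\mathcal{C}^{\op}$ and transporting along the equivalence $\txt{rev}_{\mathcal{C}}$, using that $\txt{rev}$ exchanges $\LV$ and $\IV$. The only step you spell out in more detail than the paper is the identification of $\LV_{\mathcal{C}^{\op}}$ with $\IV_{\mathcal{C}}$, which is fine.
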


\begin{propn}
  Suppose $L \colon \mathcal{C} \to \mathcal{C}[W^{-1}]$ is the localization of
  an \icat{} $\mathcal{C}$ at a collection of morphisms $W$. Then the
  functor $L$
  is coinitial and cofinal.
\end{propn}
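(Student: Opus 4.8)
The plan is to reduce the statement to a known cofinality criterion. Recall that a functor $G \colon \mathcal{A} \to \mathcal{B}$ is cofinal if and only if for every object $b \in \mathcal{B}$ the comma \icat{} $\mathcal{A}_{b/} := \mathcal{A} \times_{\mathcal{B}} \mathcal{B}_{b/}$ is weakly contractible (Joyal–Lurie; \cite{HTT}*{Theorem 4.1.3.1}). So I would first show that $L \colon \mathcal{C} \to \mathcal{C}[W^{-1}]$ is cofinal, and then apply the same argument to $L^{\op}$ (which is the localization of $\mathcal{C}^{\op}$ at $W^{\op}$, since localizations are self-dual) to get coinitiality.

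The key observation is that $L$ admits a very strong form of ``fully faithful-like'' behaviour at the level of comma categories: for $y \in \mathcal{C}[W^{-1}]$, I want to exhibit the comma \icat{} $\mathcal{C} \times_{\mathcal{C}[W^{-1}]} \mathcal{C}[W^{-1}]_{y/}$ as weakly contractible. The first step is to reduce to the case where $y = L(x)$ lies in the image of $L$: since $L$ is essentially surjective (localizations are), every $y$ is equivalent to some $L(x)$, and the comma \icat{} only depends on $y$ up to equivalence. Now for $y = L(x)$ there is a canonical object of the comma \icat{}, namely $(x, \mathrm{id}_{L(x)})$, coming from the identity of $x$ together with $L$ applied to it. The plan is to show this object is \emph{initial} in the comma \icat{}, which certainly implies weak contractibility. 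Concretely, an object of $\mathcal{C} \times_{\mathcal{C}[W^{-1}]} \mathcal{C}[W^{-1}]_{L(x)/}$ is a pair $(c, \gamma \colon L(x) \to L(c))$, and mapping spaces out of $(x,\mathrm{id})$ into $(c,\gamma)$ should be computed as the fiber of $\Map_{\mathcal{C}}(x, c) \to \Map_{\mathcal{C}[W^{-1}]}(L(x), L(c))$ over $\gamma$; showing this is contractible for all $(c,\gamma)$ is exactly the assertion that $(x,\mathrm{id})$ is initial.

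The main obstacle is this last point: controlling the fibers of $\Map_{\mathcal{C}}(x,c) \to \Map_{\mathcal{C}[W^{-1}]}(L(x), L(c))$. This is not literally an equivalence — $L$ is typically far from fully faithful — so I cannot argue fiberwise directly. Instead I would use the universal property of the localization at the level of mapping spaces: the comma \icat{} $\mathcal{C} \times_{\mathcal{C}[W^{-1}]} \mathcal{C}[W^{-1}]_{L(x)/}$ is itself (the total space of) a left fibration over $\mathcal{C}$ classifying the presheaf $c \mapsto \Map_{\mathcal{C}[W^{-1}]}(L(x), L(c))$, i.e.\ the composite $\mathcal{C} \xto{L} \mathcal{C}[W^{-1}] \xto{\Map(L(x), -)} \mathcal{S}$. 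The colimit of this functor over $\mathcal{C}$ computes the weak homotopy type of the total space (by straightening, the colimit of the functor classifying a left fibration is the classifying space of its total space — this is the dual of \cref{propn:pshrfibcolim}). Since $L$ is a localization it is in particular cofinal onto — no; rather, since $\mathcal{C}[W^{-1}]$ is obtained from $\mathcal{C}$ by inverting $W$, the localization map induces an equivalence on classifying spaces $B\mathcal{C} \simeq B\mathcal{C}[W^{-1}]$ (inverting morphisms does not change the homotopy type), and more is true: for any functor $\Phi \colon \mathcal{C}[W^{-1}] \to \mathcal{S}$ we have $\colim_{\mathcal{C}} \Phi \circ L \simeq \colim_{\mathcal{C}[W^{-1}]} \Phi$, because $\Phi \circ L$ automatically inverts $W$ and the colimit over $\mathcal{C}$ of a $W$-inverting diagram factors through the localization. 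Applying this with $\Phi = \Map_{\mathcal{C}[W^{-1}]}(L(x), -)$ and using that the corepresentable presheaf has contractible colimit ($\mathcal{C}[W^{-1}]_{L(x)/}$ has an initial object, hence contractible classifying space), we conclude $B\bigl(\mathcal{C} \times_{\mathcal{C}[W^{-1}]} \mathcal{C}[W^{-1}]_{L(x)/}\bigr) \simeq *$, which is the desired weak contractibility. The same argument verbatim, applied to $L^{\op} \colon \mathcal{C}^{\op} \to \mathcal{C}^{\op}[W^{\op, -1}] \simeq \mathcal{C}[W^{-1}]^{\op}$, gives that $L^{\op}$ is cofinal, i.e.\ $L$ is coinitial.
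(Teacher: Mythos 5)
Your argument is correct, but it takes a genuinely different route from the paper's. The paper presents $\mathcal{C}[W^{-1}]$ as a pushout of the map $\mathcal{W} \to \|\mathcal{W}\|$ along $\mathcal{W} \hookrightarrow \mathcal{C}$ (where $\mathcal{W}$ is the subcategory of morphisms in $W$), observes that a weak homotopy equivalence onto an \igpd{} is cofinal by \cite{HTT}*{Corollary 4.1.2.6}, and concludes using the stability of cofinal maps under cobase change (\cite{HTT}*{Corollary 4.1.2.7}); coinitiality then follows by passing to opposites. You instead verify Joyal's criterion directly: you identify the comma \icat{} $\mathcal{C} \times_{\mathcal{C}[W^{-1}]} \mathcal{C}[W^{-1}]_{L(x)/}$ as the left fibration over $\mathcal{C}$ classifying $\Map_{\mathcal{C}[W^{-1}]}(L(x), L(-))$, compute its weak homotopy type as the colimit of that functor, and transport the colimit along $L$ using the universal property of the localization, landing on the contractible classifying space of a coslice. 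This works, and it makes transparent that the proposition is essentially a reformulation of the universal property of localization for $\mathcal{S}$-valued functors; the paper's proof is shorter but leans on the specific presentation of the localization as a pushout. Two points deserve attention in your write-up. First, the key identity $\colim_{\mathcal{C}} \Phi \circ L \simeq \colim_{\mathcal{C}[W^{-1}]} \Phi$ needs a justification that does not secretly invoke cofinality: it follows because $L^{*} \colon \Fun(\mathcal{C}[W^{-1}],\mathcal{S}) \to \Fun(\mathcal{C},\mathcal{S})$ is fully faithful and constant functors on $\mathcal{C}$ are $W$-inverting, so cocones under $\Phi \circ L$ agree with cocones under $\Phi$ (equivalently, $L_{!}L^{*} \simeq \id$ and left Kan extensions compose). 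Second, the fact that the colimit of a space-valued functor is the groupoidification of the total space of its classifying left fibration is \cite{HTT}*{Corollary 3.3.4.6}, not literally the dual of \cref{propn:pshrfibcolim} (though it can be derived from it). The detour through the (correctly abandoned) claim that $(x,\id_{L(x)})$ is initial in the comma \icat{} should simply be deleted.
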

\begin{proof}
  Without loss of generality the morphisms in $W$ are closed under
  composition and contain all equivalences; we can then let $\mathcal{W}$
  denote the subcategory of $\mathcal{C}$ containing the morphisms in
  $W$. By definition of $\mathcal{C}[W^{-1}]$ we then have a pushout
  square
  \dnlcsquare{\mathcal{W}}{\|\mathcal{W}\|}{\mathcal{C}}{\mathcal{C}[W^{-1}],}
  where $\|\mathcal{W}\|$ denotes the \igpd{} obtained by inverting
  all morphisms in $\mathcal{W}$. By \cite{HTT}*{Corollary 4.1.2.6}
  the map $\mathcal{W} \to \|\mathcal{W}\|$ is cofinal, so by
  \cite{HTT}*{Corollary 4.1.2.7} the pushout $\mathcal{C} \to \mathcal{C}[W^{-1}]$
  is also cofinal. To see that it is also coinitial, we apply the same
  argument on opposite \icats{}.
\end{proof}
\begin{cor}\label{cor:DCmapscof}
  For any \icat{} $\mathcal{C}$, the functors
  \[ \mathfrak{L}_{\mathcal{C}} \colon \DC \to \mathcal{C}, \quad
    \mathfrak{I}_{\mathcal{C}} \colon \DC \to \mathcal{C}^{\op}\]
  are both coinitial and cofinal.\qed
\end{cor}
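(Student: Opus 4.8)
The plan is to recognize both functors as composites of a localization functor with an equivalence, and then quote the immediately preceding proposition. By construction $\mathfrak{L}_{\mathcal{C}}$ is the composite
\[ \DC \longrightarrow \DC[\LV_{\mathcal{C}}^{-1}] \isoto \mathcal{C}\]
of the localization map with the equivalence established above, and likewise $\mathfrak{I}_{\mathcal{C}}$ factors as $\DC \to \DC[\IV_{\mathcal{C}}^{-1}] \isoto \mathcal{C}^{\op}$. So the statement will follow once we know that localizations are coinitial and cofinal, which is exactly the content of the preceding proposition, together with the (routine) facts that equivalences are coinitial and cofinal and that both classes are closed under composition.

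Concretely, I would argue as follows. First, any equivalence of \icats{} is trivially both coinitial and cofinal, since it induces equivalences on all (co)limit diagrams; this applies to $\DC[\LV_{\mathcal{C}}^{-1}] \isoto \mathcal{C}$ and $\DC[\IV_{\mathcal{C}}^{-1}] \isoto \mathcal{C}^{\op}$. Next, by the proposition above the localization functors $\DC \to \DC[\LV_{\mathcal{C}}^{-1}]$ and $\DC \to \DC[\IV_{\mathcal{C}}^{-1}]$ are coinitial and cofinal. Finally, the composite of two cofinal functors is cofinal — the induced map on colimits of the composite is the composite of the induced maps, and a composite of equivalences is an equivalence — and dually for coinitial functors; hence $\mathfrak{L}_{\mathcal{C}}$ and $\mathfrak{I}_{\mathcal{C}}$ are both coinitial and cofinal.

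There is essentially no obstacle here: all the work has already been done in assembling the equivalences $\DC[\LV_{\mathcal{C}}^{-1}] \simeq \mathcal{C}$ and $\DC[\IV_{\mathcal{C}}^{-1}] \simeq \mathcal{C}^{\op}$ and in proving that localizations are coinitial and cofinal, so this is a genuine corollary. The only points one might pause over — that equivalences are coinitial/cofinal and that these two classes are closed under composition — are both immediate from the definition of (co)finality in terms of induced maps on (co)limits, which is why the statement is marked with \qed.
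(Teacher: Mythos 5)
Your argument is exactly the one the paper intends: the corollary is marked \qed precisely because $\mathfrak{L}_{\mathcal{C}}$ and $\mathfrak{I}_{\mathcal{C}}$ are by construction localization functors followed by equivalences, so the preceding proposition (localizations are coinitial and cofinal) applies directly. The proposal is correct and takes essentially the same approach as the paper.
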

That $\mathfrak{I}_{\mathcal{C}}$ is coinitial was previously shown by
Shah~\cite{ShahThesis}*{Proposition 12.4}. As a consequence, we obtain
an \icatl{} version of the Bousfield--Kan
formula for homotopy colimits, which has also previously appeared in
work of Shah~\cite{ShahThesis}*{Corollary 12.5} and
Mazel-Gee~\cite{MazelGeeGroth}*{Theorem 5.8}.
\begin{cor}[Bousfield--Kan formula]\label{BKform}
  Let $\mathcal{D}$ be a cocomplete \icat{}. The colimit of a functor
  $F \colon \mathcal{C} \to \mathcal{D}$ is equivalent to the colimit
  of a simplicial object $\Dop \to \mathcal{D}$ given by
  \[ [n] \mapsto \colim_{\alpha \in \Map([n], \mathcal{C})} F(\alpha(0)).\]
\end{cor}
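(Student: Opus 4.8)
The plan is to deduce this from the cofinality of $\mathfrak{I}_{\mathcal{C}}$ recorded in \cref{cor:DCmapscof}, by evaluating a colimit over $\DCop$ in two stages along its projection to $\Dop$.

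First, recall that $\mathfrak{I}_{\mathcal{C}} \colon \DC \to \mathcal{C}^{\op}$ takes a simplex $\sigma \colon [n] \to \mathcal{C}$ to its initial vertex $\sigma(0)$, regarded as an object of $\mathcal{C}^{\op}$. Since $\mathfrak{I}_{\mathcal{C}}$ is coinitial, the opposite functor $\mathfrak{I}_{\mathcal{C}}^{\op} \colon \DCop \to \mathcal{C}$ is cofinal, so for cocomplete $\mathcal{D}$ we obtain a natural equivalence
\[ \colim_{\mathcal{C}} F \;\simeq\; \colim_{\DCop} G, \qquad G := F \circ \mathfrak{I}_{\mathcal{C}}^{\op},\]
where $G$ sends $\sigma$ to $F(\sigma(0))$. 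It then remains to identify the right-hand side with the colimit of the simplicial object in the statement.

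Next I would use that a colimit is a left Kan extension to the point and that left Kan extensions compose: writing $q \colon \DCop \to \Dop$ for the projection, this gives $\colim_{\DCop} G \simeq \colim_{\Dop} q_{!}G$, with $q_{!}G$ the left Kan extension of $G$ along $q$. The point requiring care is that $q$ must be a \emph{left} fibration for $q_{!}$ to be computed fibrewise; this is exactly why we pass to opposite categories and work with $\mathfrak{I}_{\mathcal{C}}$ rather than $\mathfrak{L}_{\mathcal{C}}$, and hence why the formula involves $F(\sigma(0))$ rather than $F(\sigma(n))$. Since $\DC \to \simp$ is a right fibration, $q$ is a left fibration, hence smooth \cite{HTT}*{Proposition 4.1.2.15}. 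As smoothness is stable under base change and smooth maps send cofinal maps to cofinal maps under pullback, the fibre inclusion $(\DCop)_{[n]} \hookrightarrow \DCop \times_{\Dop} (\Dop)_{/[n]}$ — a pullback along a base change of $q$ of the cofinal inclusion $\{[n]\} \hookrightarrow (\Dop)_{/[n]}$ of a terminal object — is cofinal, and the pointwise Kan extension formula reduces to
\[ (q_{!}G)([n]) \;\simeq\; \colim_{\sigma \in (\DCop)_{[n]}} G(\sigma) \;\simeq\; \colim_{\sigma \in \Map_{\CatI}([n],\mathcal{C})} F(\sigma(0)),\]
since the fibre $(\DCop)_{[n]}$ is the \igpd{} $\Map_{\CatI}([n],\mathcal{C})$ (the fibre of $\DC \to \simp$ over $[n]$) and $G$ restricts to $\sigma \mapsto F(\sigma(0))$ on it.

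Finally I would verify that the simplicial structure maps of $q_{!}G$ are the evident ones: a map $\phi \colon [n] \to [m]$ of $\simp$ induces $\tau \mapsto \tau \circ \phi$ from $\Map_{\CatI}([m],\mathcal{C})$ to $\Map_{\CatI}([n],\mathcal{C})$, together with, on each summand, the map $F(\tau(0)) \to F(\tau(\phi(0)))$ coming from the morphism $\tau(0) \to \tau(\phi(0))$ of $\mathcal{C}$ associated to $0 \leq \phi(0)$. Hence $q_{!}G$ is the simplicial object of the statement, and the two displayed equivalences combine to give $\colim_{\mathcal{C}} F \simeq \colim_{\Dop} q_{!}G$. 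I expect the only real subtlety to be the variance point flagged above; pinning down $\mathfrak{I}_{\mathcal{C}}$ on objects and matching the structure maps are routine.
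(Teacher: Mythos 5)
Your proof is correct and takes essentially the same route as the paper's: pass to the cofinal functor $\mathfrak{I}_{\mathcal{C}}^{\op} \colon \DCop \to \mathcal{C}$ and then compute the colimit in two stages via left Kan extension along the left fibration $\DCop \to \Dop$. The paper leaves the fibrewise identification of that Kan extension implicit, whereas you justify it via smoothness; this is a correct filling-in of the same argument, not a different one.
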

\begin{proof}
  We can compute the colimit of $F$ after composing with the cofinal
  map $\mathfrak{I}_{\mathcal{C}}^{\op} \colon \DCop \to \mathcal{C}$, which takes $\alpha \colon [n] \to
  \mathcal{C}$ to $\alpha(0)$. This colimit we can in turn compute in
  two stages, by first taking the left Kan extension along the
  projection $\DCop \to \Dop$, which produces a simplicial object of
  the given form, and then taking the colimit of this simplicial object.
\end{proof}

We are now in a position to define ends and coends:
\begin{defn}
  Given a functor
  $F \colon \mathcal{C} \times \mathcal{C}^{\op} \to \mathcal{D}$, its
  \emph{coend}\footnote{We use the original notational convention of
    \cite{YonedaExt} rather than the ``Australian'' convention, where
    the coend is denoted $\int^{\mathcal{C}}F$ and the end is denoted
    $\int_{\mathcal{C}}F$ --- after all, it is the \emph{co}end of $F$
    that
    is somewhat analogous to an integral, not the end.}
  $\int_{\mathcal{C}} F$ is the colimit of the composite
  functor
  \[ \DCop
    \xto{(\mathfrak{I}_{\mathcal{C}}^{\op},\mathfrak{L}_{\mathcal{C}}^{\op})}
    \mathcal{C} \times \mathcal{C}^{\op}\xto{F} \mathcal{D}.\] 
  Dually, the
  \emph{end} $\int^{*}_{\mathcal{C}}F$ of $F$ is the limit of the
  composite functor
  \[ \DC \xto{(\mathfrak{L}_{\mathcal{C}}, \mathfrak{I}_{\mathcal{C}})} \mathcal{C} \times \mathcal{C}^{\op} \to \mathcal{D}.\]
\end{defn}

\begin{lemma}\label{lem:coendsimpform}
  If $\mathcal{D}$ is a cocomplete \icat{}, then the coend of a
  functor \[F \colon \mathcal{C} \times \mathcal{C}^{\op} \to \mathcal{D}\] can
  be computed as the colimit of a simplicial object $\Dop \to
  \mathcal{D}$ given by 
  \[ [n] \mapsto \colim_{\alpha \in \Map([n], \mathcal{C})}
  F(\alpha(0), \alpha(n)).\]
\end{lemma}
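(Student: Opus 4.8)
The plan is to compute the coend in two stages, exactly as in the proof of the Bousfield--Kan formula \cref{BKform}. By definition the coend $\int_{\mathcal{C}}F$ is the colimit of the composite
\[ \DCop \xto{(\mathfrak{I}_{\mathcal{C}}^{\op}, \mathfrak{L}_{\mathcal{C}}^{\op})} \mathcal{C} \times \mathcal{C}^{\op} \xto{F} \mathcal{D}. \]
First I would recall that $\mathfrak{I}_{\mathcal{C}}$ sends $\alpha \colon [n] \to \mathcal{C}$ to $\alpha(0)$ and $\mathfrak{L}_{\mathcal{C}}$ sends it to $\alpha(n)$ --- these descriptions follow from the constructions of $\mathfrak{L}_{\mathcal{C}}$ and $\mathfrak{I}_{\mathcal{C}}$ as the localization functors, unwinding that $\mathfrak{L}_{\mathcal{C}}$ comes from the section taking $[n]$ to its last vertex and $\mathfrak{I}_{\mathcal{C}}$ from composing with $\txt{rev}$. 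Hence the composite above takes $\alpha \colon [n] \to \mathcal{C}$ to $F(\alpha(0), \alpha(n))$.

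Next I would factor the colimit through the projection $r \colon \DCop \to \Dop$ (the opposite of the right fibration $\DC \to \simp$), using that a colimit over $\DCop$ can be computed by first taking the left Kan extension $r_{!}$ and then the colimit over $\Dop$ of the resulting simplicial object. Since $r$ is a cartesian fibration (being the opposite of the right fibration $\DC \to \simp$), the left Kan extension along $r$ is computed pointwise by a colimit over the fibre: the value of $r_{!}(F \circ (\mathfrak{I}^{\op}, \mathfrak{L}^{\op}))$ at $[n]$ is the colimit over the fibre $\DCop \times_{\Dop} \{[n]\}$, which is the space $\Map_{\CatI}([n], \mathcal{C})$, of the functor sending $\alpha$ to $F(\alpha(0), \alpha(n))$. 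This is precisely the simplicial object in the statement, so its colimit computes the coend.

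The main point to be careful about is the pointwise formula for the left Kan extension along $r$: in general the left Kan extension along a functor is computed by a colimit over a comma category $r_{/[n]}$, and one needs that this comma category is (co)final over the fibre $\DCop_{[n]}$. This holds because $r$ is a cartesian fibration, so the inclusion of the fibre into the comma category admits a right adjoint (sending an object of $r_{/[n]}$ to its cartesian lift), hence is cofinal by \cite{HTT}*{Theorem 4.1.3.1}; alternatively one can invoke the dual of \cite{HTT}*{Lemma 4.3.2.13} directly. The identification of the fibre with the mapping space $\Map_{\CatI}([n], \mathcal{C})$ is the content of the remark following the definition of $\DC$. Once these identifications are in place the result is immediate; there is no genuinely hard step, only the bookkeeping of these standard facts about (co)cartesian fibrations.
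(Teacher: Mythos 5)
Your proof is correct in substance and follows exactly the paper's (very terse) argument: compute the coend as a colimit over $\DCop$ in two stages, first left Kan extending along the projection to $\Dop$ and identifying the result fibrewise. One directional slip in your justification of the pointwise formula should be fixed: $r \colon \DCop \to \Dop$ is the opposite of a right fibration and is therefore a \emph{left} fibration, in particular a \emph{cocartesian} fibration, not a cartesian one; correspondingly, the inclusion of the fibre $\Map([n],\mathcal{C})$ into the comma category $\DCop \times_{\Dop} \Dop_{/[n]}$ admits a \emph{left} adjoint (given by cocartesian pushforward $(\alpha, f) \mapsto f_{!}\alpha$), and it is functors admitting left adjoints that are cofinal --- functors admitting right adjoints are coinitial. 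With the variances corrected, your argument is precisely the standard fact that left Kan extension along a cocartesian fibration is computed by colimits over the fibres, which is what the paper implicitly invokes.
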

\begin{proof}
  The colimit over $\DCop$ can be computed in two steps by first taking the left
  Kan extension along the projection $\DCop \to \Dop$, which gives the
  desired simplicial object, and then taking the colimit of this
  simplicial object.
\end{proof}

A key property of ends is the ``Fubini theorem'' for iterated
ends. This was proved for \icats{} by Loregian~\cite{LoregianFub},
using the definition of (co)ends via twisted arrows. We include a 
proof, as it is very easy to see using \icats{} of simplices:
\begin{propn}[``Fubini's Theorem'']
  Given a functor $F \colon (\mathcal{C}\times \mathcal{D})^{\op}
  \times (\mathcal{C} \times \mathcal{D}) \to \mathcal{E}$, there are 
  natural equivalences of ends 
  \[ \int_{\mathcal{C}}^{*}\int_{\mathcal{D}}^{*} F \simeq
    \int_{\mathcal{C} \times \mathcal{D}}^{*} F \simeq \int_{\mathcal{D}}^{*}\int_{\mathcal{C}}^{*} F.\]
\end{propn}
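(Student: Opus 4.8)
The plan is to deduce this from the compatibility of $\simp_{/(\blank)}$ with products together with the coinitiality of the diagonal of $\simp$; everything else is the standard manipulation of iterated limits. As is implicit in the statement, I will assume throughout that $\mathcal{E}$ admits the relevant ends (or at least that the two outer iterated ends exist).

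Since the right fibration $\DC \to \simp$ corresponds to the presheaf $[n] \mapsto \Map_{\CatI}([n], \mathcal{C})$, which takes $\mathcal{C} \times \mathcal{D}$ to a levelwise product, there is a natural equivalence $\simp_{/\mathcal{C} \times \mathcal{D}} \simeq \DC \times_{\simp} \simp_{/\mathcal{D}}$ over $\simp$, under which the two projections are the functors induced by $\mathcal{C} \times \mathcal{D} \to \mathcal{C}$ and $\mathcal{C} \times \mathcal{D} \to \mathcal{D}$. As $\mathfrak{L}$ and $\mathfrak{I}$ are natural in the \icat{} and the two projections jointly recover the identity of $\mathcal{C} \times \mathcal{D}$, this identifies $\mathfrak{L}_{\mathcal{C} \times \mathcal{D}}$ and $\mathfrak{I}_{\mathcal{C} \times \mathcal{D}}$ with $(\mathfrak{L}_{\mathcal{C}}, \mathfrak{L}_{\mathcal{D}})$ and $(\mathfrak{I}_{\mathcal{C}}, \mathfrak{I}_{\mathcal{D}})$ along the two projections. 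Hence, writing $H \colon \DC \times \simp_{/\mathcal{D}} \to \mathcal{E}$ for the composite of $(\mathfrak{L}_{\mathcal{C}}, \mathfrak{I}_{\mathcal{C}}) \times (\mathfrak{L}_{\mathcal{D}}, \mathfrak{I}_{\mathcal{D}})$ with $F$ (after the evident reshuffle of the four variables, which I suppress), the functor whose limit is $\int^{*}_{\mathcal{C} \times \mathcal{D}} F$ is $H \circ j$, where $j \colon \simp_{/\mathcal{C} \times \mathcal{D}} \to \DC \times \simp_{/\mathcal{D}}$ is the map to the product induced by the two projections. On the other hand, unwinding the definitions, $\int^{*}_{\mathcal{C}} \int^{*}_{\mathcal{D}} F$ is $\lim_{\DC}$ of $\alpha \mapsto \lim_{\simp_{/\mathcal{D}}} H(\alpha, \blank)$, which by the usual identification of an iterated limit with a limit over a product is $\lim_{\DC \times \simp_{/\mathcal{D}}} H$. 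Thus the equivalence $\int^{*}_{\mathcal{C}} \int^{*}_{\mathcal{D}} F \simeq \int^{*}_{\mathcal{C} \times \mathcal{D}} F$ reduces to the assertion that $j$ is coinitial, and the remaining equivalence then follows by the symmetric argument with the roles of $\mathcal{C}$ and $\mathcal{D}$ exchanged.

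It remains to show $j$ is coinitial. Under the identification above, $j$ is the base change of the diagonal $\Delta \colon \simp \to \simp \times \simp$ along $\DC \times \simp_{/\mathcal{D}} \to \simp \times \simp$, which is a right fibration (a product of right fibrations); since coinitial functors are stable under base change along cartesian fibrations, it is enough to show $\Delta$ is coinitial. But for $([n], [m]) \in \simp \times \simp$ the relevant comma \icat{} $\simp \times_{\simp \times \simp} (\simp_{/[n]} \times \simp_{/[m]})$ is just the category of simplices of the poset $[n] \times [m]$, which has the same weak homotopy type as $[n] \times [m]$ and so is weakly contractible, as $[n] \times [m]$ has a terminal object. (Equivalently: $\simp$ is cosifted.) Since all the equivalences used are natural in $F$, this also yields the naturality claimed in the statement. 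I do not expect any step to be a real obstacle: the only non-formal ingredient is the classical fact that the category of simplices of a contractible simplicial set is weakly contractible, and the main thing that requires care is keeping track of the four variables of $F$ and their variances through the identifications (and, for the sharpest hypotheses, the precise sense in which existence of one iterated end forces existence of the others).
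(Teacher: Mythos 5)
Your argument is correct and follows essentially the same route as the paper: identify $\simp_{/\mathcal{C}\times\mathcal{D}}$ with $\DC\times_{\simp}\simp_{/\mathcal{D}}$, observe that the resulting comparison map to $\DC\times\simp_{/\mathcal{D}}$ is a base change of the diagonal of $\simp$ along a right fibration and hence coinitial (the paper phrases the coinitiality of the diagonal as ``$\Dop$ is sifted,'' which you verify directly via the contractibility of the categories of simplices of $[n]\times[m]$), and then conclude by compatibility of the $\mathfrak{L}$'s and $\mathfrak{I}$'s with the projections together with the description of a limit over a product as an iterated limit. No substantive differences.
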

\begin{proof}
  Since unstraightening preserves limits, we have a natural
  equivalence \[\simp_{/\mathcal{C} \times \mathcal{D}} \simeq \DC
  \times_{\simp} \simp_{/\mathcal{D}}.\]
  This means we have a pullback square
  \[
    \csquare{\simp_{/\mathcal{C} \times \mathcal{D}}}{\DC \times
      \simp_{/\mathcal{D}}}{\simp}{\simp \times \simp,}{}{}{}{}
  \]
  where the bottom horizontal arrow is coinitial, since $\Dop$ is
  sifted. Since the right vertical arrow is a right fibration, this
  implies that the top horizontal arrow is also coinitial. Moreover,
  the composite of this functor with the projection to $\DC$ is the
  functor induced by the composition with the projection $\mathcal{C}
  \times \mathcal{D} \to \mathcal{C}$, and similarly for
  $\simp_{/\mathcal{D}}$. It follows that we also have a commutative
  triangle
  \[
    \begin{tikzcd}
      \simp_{/\mathcal{C} \times \mathcal{D}} \arrow{rr}
      \arrow{dr}[swap]{(\mathfrak{I}_{\mathcal{C} \times \mathcal{D}},
        \mathfrak{L}_{\mathcal{C} \times \mathcal{D}})} & &
      \simp_{/\mathcal{C}} \times \simp_{/\mathcal{D}}
      \arrow{dl}{(\mathfrak{I}_{\mathcal{C}},
        \mathfrak{I}_{\mathcal{D}}, \mathfrak{L}_{\mathcal{C}},
        \mathfrak{L}_{\mathcal{D}})} \\
       & \mathcal{C}^{\op} \times \mathcal{D}^{\op} \times \mathcal{C}
       \times \mathcal{D}.
     \end{tikzcd}
   \]
   Together with the description of limits over a product as iterated
   limits this implies the result.
\end{proof}

\section{Coends via the Twisted Arrow $\infty$-Category}\label{sec:tw}
In this section we recall the definition of twisted arrow \icats{},
and prove that we can equivalently define (co)ends as (co)limits using
these \icats{}.

\begin{defn}
  Let $\epsilon \colon \simp \to \simp$ be the endomorphism given by
  \[ [n] \mapsto [n]^{\op} \star [n], \]
  and write $\iota \colon \id \to \epsilon$, $\rho \colon \txt{rev} \to
  \epsilon$ for the natural transformations corresponding to the
  inclusions of the factors $[n]$ and $[n]^{\op}$.
  For $\mathcal{C}$ an \icat{}, we define $\TwL(\mathcal{C})$ as the
  simplicial space
  \[ [n] \mapsto \Map([n]^{\op} \star [n], \mathcal{C}),\]
  \ie{} $\epsilon^{*}\mathcal{C}$ if we view $\mathcal{C}$ as a
  complete Segal space.  Restricting along $\iota$ and $\rho$  we get a projection
  \[ \eta_{\mathcal{C}} \colon \TwL(\mathcal{C}) \to \mathcal{C}^{\op} \times \mathcal{C}.\]
  We refer to $\TwL(\mathcal{C})$ as the (left) \emph{twisted arrow
    \icat{}} of $\mathcal{C}$, as is justified by the following result:
\end{defn}

\begin{propn}[\cite{cois}*{Proposition A.2.3}, \cite{HA}*{Proposition 5.2.1.3}]
  If $\mathcal{C}$ is an \icat{} then $\TwL(\mathcal{C})$ is a
  complete Segal space, and the projection $\eta_{\mathcal{C}} \colon \TwL(\mathcal{C}) \to
  \mathcal{C}^{\op} \times \mathcal{C}$ is a left fibration. \qed
\end{propn}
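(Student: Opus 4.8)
The plan is to stay within the complete Segal space model and deduce everything from the Segal condition on $\mathcal{C}$. The only real input is a careful look at $\epsilon\colon\simp\to\simp$: the join $[n]^{\op}\star[n]$ is the linearly ordered set $\{0<1<\dots<2n+1\}$, in which the image of $\iota_{[n]}$ is the final sub-chain $\{n+1<\dots<2n+1\}\cong[n]$, the image of $\rho_{[n]}$ is the initial sub-chain $\{0<\dots<n\}\cong[n]^{\op}$, and $\epsilon$ applied to the initial-vertex map $\{0\}\colon[0]\to[n]$ is the middle edge $\{n<n+1\}$. Hence, under the identification $\TwL(\mathcal{C})_n\simeq\mathcal{C}_{2n+1}$, the functor $\eta_{\mathcal{C}}$ is at level $n$ the pair of restrictions to these two half-chains, and the initial-vertex face map $\TwL(\mathcal{C})_n\to\TwL(\mathcal{C})_0\simeq\mathcal{C}_1$ is restriction to the middle edge.

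I would then invoke the following standard fact about this model: if $\mathcal{B}$ is a Segal space and $p\colon\mathcal{E}\to\mathcal{B}$ is a map of simplicial spaces such that for every $n$ the map $\mathcal{E}_n\to\mathcal{E}_0\times_{\mathcal{B}_0}\mathcal{B}_n$ induced by the initial vertex $\{0\}\hookrightarrow[n]$ is an equivalence, then $\mathcal{E}$ is automatically a Segal space, $p$ is a left fibration, and $\mathcal{E}$ is moreover complete as soon as $\mathcal{B}$ is. (This is the unstraightening equivalence for left fibrations over complete Segal spaces; it matches the quasicategorical statement underlying \cite{HA}*{\S 5.2.1} under the comparison of models, and the passage ``automatically a Segal space / complete'' is a short telescoping argument.) With this in hand, the proposition reduces to checking that levelwise cartesian condition for $\eta_{\mathcal{C}}$. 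By the first paragraph the comparison map in question is
\[ \mathcal{C}_{2n+1} \longrightarrow \mathcal{C}(\{0,\dots,n\}) \times_{\mathcal{C}_{\{n\}}} \mathcal{C}(\{n,n+1\}) \times_{\mathcal{C}_{\{n+1\}}} \mathcal{C}(\{n+1,\dots,2n+1\}), \]
which is exactly the Segal map of $\mathcal{C}$ for the decomposition of $[2n+1]$ into the three composable sub-chains of lengths $n$, $1$, $n$; this is an equivalence since $\mathcal{C}$ is a Segal space (the spine of $[2n+1]$ factors through it). Therefore $\eta_{\mathcal{C}}$ is a left fibration, and since $\mathcal{C}$ is complete so is $\mathcal{C}^{\op}\times\mathcal{C}$, whence $\TwL(\mathcal{C})$ is a complete Segal space.

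The content here is almost entirely bookkeeping, and I expect that to be the only place where care is needed: one must check that $\epsilon$ acts on the structure maps of $\simp$ exactly as described, in particular that the two half-chain restrictions really compute the projections to $\mathcal{C}^{\op}$ and $\mathcal{C}$ (with the order reversal on the source factor), and that the source and target vertices of the middle edge are matched up with the correct vertices of the two halves. If one prefers not to cite the unstraightening characterization, one can instead verify the Segal and completeness conditions for $\TwL(\mathcal{C})$ directly: the Segal condition asks that $\mathcal{C}$ invert $\epsilon$ applied to the spine inclusion $\mathrm{Sp}^n\hookrightarrow\Delta^n$, which is an iterated pushout of copies of the ``long/middle edge'' inclusion $\Delta^1\hookrightarrow\Delta^3$ and which, upon repeatedly applying the Segal condition for $\mathcal{C}$, unwinds to a composite of genuine spine equivalences; completeness of $\TwL(\mathcal{C})$ then follows from that of $\mathcal{C}^{\op}\times\mathcal{C}$, once one notes that $\eta_{\mathcal{C}}$, being a left fibration, both preserves and detects equivalences.
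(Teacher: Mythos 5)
Your proposal is correct, and it is essentially the standard argument: the paper itself does not prove this proposition but defers to \cite{cois}*{Proposition A.2.3} and \cite{HA}*{Proposition 5.2.1.3}, and those proofs run exactly as you describe, identifying the initial-vertex comparison map $\TwL(\mathcal{C})_{n}\to\TwL(\mathcal{C})_{0}\times_{(\mathcal{C}^{\op}\times\mathcal{C})_{0}}(\mathcal{C}^{\op}\times\mathcal{C})_{n}$ with the Segal map for the decomposition of $[2n+1]$ into $\{0,\dots,n\}$, $\{n,n+1\}$, $\{n+1,\dots,2n+1\}$, and then invoking the fact that a left fibration over a (complete) Segal space has (complete) Segal total space. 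The only ingredient you use as a black box is that last fact, which is indeed established in the literature you would be citing anyway, so nothing is missing.
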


\begin{variant}
  If we instead consider the endofunctor of $\simp$ given by $[n]
  \mapsto [n] \star [n]^{\op}$, we get the \emph{right} twisted arrow
  \icat{} $\Tw^{r}(\mathcal{C}) := \TwL(\mathcal{C})^{\op}$, whose
   projection $\Tw^{r}(\mathcal{C}) \to \mathcal{C} \times
  \mathcal{C}^{\op}$ is a right fibration.
\end{variant}

\begin{lemma}\label{lem:DTwCpb}
  There is a natural pullback square
  \[\csquare{\simp_{/\TwL(\mathcal{C})}}{\DC}{\simp}{\simp}{\epsilon_{\mathcal{C}}}{}{}{\epsilon}\]
\end{lemma}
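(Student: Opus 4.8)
The plan is to realize both sides of the claimed pullback square as right fibrations over $\simp$ and identify them via their classifying functors $\Dop \to \mathcal{S}$. Recall that $\DC \to \simp$ is the right fibration classified by the simplicial space $[n] \mapsto \Map_{\CatI}([n],\mathcal{C})$, i.e.\ by $\mathcal{C}$ itself viewed as a complete Segal space. The top-left corner $\simp_{/\TwL(\mathcal{C})}$ is, by the same definition applied to the \icat{} $\TwL(\mathcal{C})$, the right fibration over $\simp$ classified by $[n] \mapsto \Map_{\CatI}([n], \TwL(\mathcal{C}))$, which by the definition of $\TwL(\mathcal{C})$ as the simplicial space $\epsilon^{*}\mathcal{C}$ is $[n] \mapsto \Map([n]^{\op}\star[n],\mathcal{C}) = (\epsilon^{*}\mathcal{C})_{n}$.

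First I would observe that $\epsilon^{*}\mathcal{C}$, as a functor $\Dop \to \mathcal{S}$, is precisely the composite of $\epsilon^{\op} \colon \Dop \to \Dop$ with the complete Segal space $\mathcal{C} \colon \Dop \to \mathcal{S}$. Since unstraightening (equivalently, the Grothendieck construction for right fibrations over $\simp$) sends a composite $\mathcal{C}\circ\epsilon^{\op}$ to the pullback of the right fibration for $\mathcal{C}$ along the functor $\epsilon \colon \simp \to \simp$, this exhibits $\simp_{/\TwL(\mathcal{C})}$ as the pullback $\simp \times_{\epsilon, \simp} \DC$. Unwinding, this is exactly the asserted pullback square, with $\epsilon_{\mathcal{C}}$ the induced map on total spaces covering $\epsilon$. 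Naturality in $\mathcal{C}$ is automatic since every construction in sight — $\DC$, $\TwL(\mathcal{C})$, and the pullback — is functorial in the complete Segal space $\mathcal{C}$.

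To make this airtight I would want to spell out the compatibility between the definition $\DC := \simp \times_{\CatI} \Cat_{\infty/\mathcal{C}}$ and the ``right fibration classified by the Segal space'' description — this is the content of the remark following the definition of $\DC$, which I may invoke — and then note that the same identification applied to $\TwL(\mathcal{C})$ in place of $\mathcal{C}$ gives $\simp_{/\TwL(\mathcal{C})}$ as the right fibration for $\epsilon^{*}\mathcal{C}$. The only genuinely substantive point is that pulling back a right fibration over $\simp$ along a functor $F \colon \simp \to \simp$ corresponds, under straightening, to precomposition with $F^{\op}$; this is standard (it is part of the naturality of the straightening/unstraightening equivalence over a base change), so I would simply cite it.

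The main obstacle, such as it is, is bookkeeping rather than mathematics: keeping straight which variance of $\epsilon$ appears where, since $\TwL$ is built from $[n]^{\op}\star[n]$ and one must check that the factor giving $\mathcal{C}^{\op}$ versus $\mathcal{C}$ in $\eta_{\mathcal{C}}$ is consistent with how $\epsilon$ acts on $\simp$ (not on $\simp^{\op}$). Once one fixes the convention that $\epsilon \colon \simp \to \simp$ is the stated endofunctor and that the Segal space of $\TwL(\mathcal{C})$ is literally $[n] \mapsto \mathcal{C}_{[n]^{\op}\star[n]}$, the pullback square falls out with no computation. I would therefore present the proof in essentially one paragraph: identify $\simp_{/\TwL(\mathcal{C})}$ with the right fibration for $\epsilon^{*}\mathcal{C}$, identify the latter with $\epsilon^{*}$ of the right fibration for $\mathcal{C}$, and conclude.
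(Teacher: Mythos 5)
Your proof is correct and is essentially the paper's own argument: both identify $\simp_{/\TwL(\mathcal{C})}$ as the right fibration classified by $\epsilon^{*}\mathcal{C}$ and use that pulling back a right fibration along $\epsilon$ corresponds to precomposing the classifying functor with $\epsilon^{\op}$. The extra bookkeeping you flag is fine but not needed beyond what the paper already records in the remark following the definition of $\DC$.
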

\begin{proof}
  The pullback of $\DC \to \simp$ along $\epsilon$ is the right
  fibration for the composite
  \[ \simp \xto{\epsilon} \simp \xto{\mathcal{C}} \mathcal{S},\]
  which is by definition $\simp_{/\TwL(\mathcal{C})}$.
\end{proof}

\begin{propn}\label{propn:twsquare}
  There is a natural commutative square
  \begin{equation}
    \label{eq:twsquare}
  \csquare{\simp_{/\TwL(\mathcal{C})}}{\DC}{\TwL(\mathcal{C})}{\mathcal{C}^\op
    \times
    \mathcal{C}.}{\epsilon_{\mathcal{C}}}{\mathfrak{L}_{\TwL(\mathcal{C})}}{(\mathfrak{I}_{\mathcal{C}},\mathfrak{L}_{\mathcal{C}})}{\eta_{\mathcal{C}}}     
  \end{equation}
\end{propn}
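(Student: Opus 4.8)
The plan is to reduce the asserted natural equivalence $(\mathfrak{I}_{\mathcal{C}}, \mathfrak{L}_{\mathcal{C}}) \circ \epsilon_{\mathcal{C}} \simeq \eta_{\mathcal{C}} \circ \mathfrak{L}_{\TwL(\mathcal{C})}$ of functors $\simp_{/\TwL(\mathcal{C})} \to \mathcal{C}^{\op} \times \mathcal{C}$ to its two components, since the target is a product. Writing $\eta^{\iota}_{\mathcal{C}} \colon \TwL(\mathcal{C}) \to \mathcal{C}$ and $\eta^{\rho}_{\mathcal{C}} \colon \TwL(\mathcal{C}) \to \mathcal{C}^{\op}$ for the two projections of $\eta_{\mathcal{C}}$ — which by construction are the maps of complete Segal spaces $\epsilon^{*}\mathcal{C} \to \mathcal{C}$ and $\epsilon^{*}\mathcal{C} \to \mathcal{C}^{\op}$ got by restricting along $\iota \colon \id_{\simp} \to \epsilon$ and $\rho \colon \txt{rev} \to \epsilon$ — I would separately establish
\[
\mathfrak{L}_{\mathcal{C}} \circ \epsilon_{\mathcal{C}} \simeq \eta^{\iota}_{\mathcal{C}} \circ \mathfrak{L}_{\TwL(\mathcal{C})} \quad\text{and}\quad \mathfrak{I}_{\mathcal{C}} \circ \epsilon_{\mathcal{C}} \simeq \eta^{\rho}_{\mathcal{C}} \circ \mathfrak{L}_{\TwL(\mathcal{C})}.
\]
The two facts I would lean on are that $\mathfrak{L}_{\mathcal{C}}$ and $\mathfrak{I}_{\mathcal{C}}$ invert the morphisms in $\LV_{\mathcal{C}}$ and $\IV_{\mathcal{C}}$ respectively, being the associated localization functors, hence carry a natural transformation whose components lie in these classes to a natural equivalence; and that $\mathfrak{L}$ is natural in the \icat{} (so $\mathfrak{L}_{\mathcal{C}} \circ \simp_{/g} \simeq g \circ \mathfrak{L}_{\mathcal{D}}$ for $g \colon \mathcal{D} \to \mathcal{C}$), and hence so is $\mathfrak{I}$ via the equivalence $\txt{rev}_{\mathcal{C}}$.

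For the first equivalence I would apply the simplex-category construction $\simp_{/(-)}$ to $\eta^{\iota}_{\mathcal{C}}$. Using \cref{lem:DTwCpb}, which identifies $\simp_{/\TwL(\mathcal{C})}$ with $\epsilon^{*}\DC$, the functor $\simp_{/\eta^{\iota}_{\mathcal{C}}} \colon \simp_{/\TwL(\mathcal{C})} \to \simp_{/\mathcal{C}} = \DC$ is, over $[n] \in \simp$, the restriction $\Map(\epsilon[n], \mathcal{C}) \to \Map([n], \mathcal{C})$ along $\iota_{[n]} \colon [n] \to \epsilon[n] = [n]^{\op}\star[n]$, and it comes with a natural transformation $\simp_{/\eta^{\iota}_{\mathcal{C}}} \to \epsilon_{\mathcal{C}}$ whose value at an object over $[n]$ is the morphism of $\DC$ induced by $\iota_{[n]}$ — that is, the transformation obtained by pulling back the natural transformation $\iota$ along the right fibration $\DC \to \simp$. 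Since $\iota_{[n]}$ carries $n$ to the terminal object of $[n]^{\op}\star[n]$ it is a last-vertex morphism, so all components of this transformation lie in $\LV_{\mathcal{C}}$; applying $\mathfrak{L}_{\mathcal{C}}$ gives $\mathfrak{L}_{\mathcal{C}} \circ \epsilon_{\mathcal{C}} \simeq \mathfrak{L}_{\mathcal{C}} \circ \simp_{/\eta^{\iota}_{\mathcal{C}}}$, and naturality of $\mathfrak{L}$ rewrites the right-hand side as $\eta^{\iota}_{\mathcal{C}} \circ \mathfrak{L}_{\TwL(\mathcal{C})}$, giving the first equivalence.

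The second equivalence I would prove the same way with $\rho$ in place of $\iota$: the maps $\rho_{[n]} \colon \txt{rev}[n] \to \epsilon[n]$ send $0$ to the initial object of $[n]^{\op}\star[n]$ and hence are initial-vertex morphisms, so the analogous comparison transformation $\simp_{/\eta^{\rho}_{\mathcal{C}}} \to \epsilon_{\mathcal{C}}$ has all components in $\IV_{\mathcal{C}}$ and is inverted by $\mathfrak{I}_{\mathcal{C}}$, and the naturality step uses $\mathfrak{I}_{\mathcal{C}} \simeq \mathfrak{L}_{\mathcal{C}^{\op}} \circ (\txt{rev}_{\mathcal{C}})^{-1}$ together with naturality of $\mathfrak{L}$; alternatively, this second equivalence is just the first one applied to $\mathcal{C}^{\op}$, via the identification $\TwL(\mathcal{C}^{\op}) \simeq \TwL(\mathcal{C})$ compatible with the flip on $\mathcal{C}^{\op}\times\mathcal{C}$. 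Assembling the two components then gives the square. The step I expect to be the main obstacle is the identification of $\simp_{/\eta^{\iota}_{\mathcal{C}}}$, and its $\rho$-analogue, with the pullback along $\DC \to \simp$ of the natural transformation $\iota$ resp.\ $\rho$ — \ie{} that applying $\simp_{/(-)}$ to the restriction map $\epsilon^{*}\mathcal{C} \to \mathcal{C}$ and pulling back the right fibration $\DC \to \simp$ along the corresponding natural transformation of endofunctors of $\simp$ describe the same morphism of right fibrations over $\simp$ — and, in the second component, keeping track of the various occurrences of $\txt{rev}$ and $(-)^{\op}$; both are routine given unstraightening but are where care is needed.
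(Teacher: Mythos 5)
Your argument is correct, and it runs on the same fuel as the paper's proof: the pullback of the natural transformations $\iota$ and $\rho$ along the right fibration $\DC \to \simp$, the inclusion of $\iota_{[n]}$ (resp.\ $\rho_{[n]}$) in $\LV$ (resp.\ $\IV$), and the naturality of $\mathfrak{L}$ in the $\infty$-category. The organization differs, though, in a way worth noting. The paper first uses $\epsilon(\LV) \subseteq \IV \cap \LV$ to factor $(\mathfrak{I}_{\mathcal{C}},\mathfrak{L}_{\mathcal{C}}) \circ \epsilon_{\mathcal{C}}$ through the localization $\mathfrak{L}_{\TwL(\mathcal{C})}$, and then identifies the induced functor with $\eta_{\mathcal{C}}$ componentwise by straightening the cartesian fibration $\iota^{*}\DC \to \Delta^{1}$, whose classified functor $q$ is composition with $p \colon \TwL(\mathcal{C}) \to \mathcal{C}$ (your $\simp_{/\eta^{\iota}_{\mathcal{C}}}$); the paper then asserts $q \simeq \epsilon_{\mathcal{C}}$ and concludes. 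You instead keep $\simp_{/\eta^{\iota}_{\mathcal{C}}}$ and $\epsilon_{\mathcal{C}}$ distinct and relate them by the comparison transformation covering $\iota$, observing that its components lie in $\LV_{\mathcal{C}}$ and so are inverted by $\mathfrak{L}_{\mathcal{C}}$. This is actually the sharper formulation of the key step: $\simp_{/\eta^{\iota}_{\mathcal{C}}}$ and $\epsilon_{\mathcal{C}}$ land over $[n]$ and $[n]^{\op}\star[n]$ respectively, so they are not equivalent as functors to $\DC$ --- what is true, and what the argument genuinely needs, is precisely your statement that they become equivalent after composing with the localization $\mathfrak{L}_{\mathcal{C}}$. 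The two points you flag as needing care are indeed where the work sits: the identification of the source end of the cartesian pullback of $\iota$ with $\simp_{/\eta^{\iota}_{\mathcal{C}}}$ is the same unstraightening argument the paper carries out when constructing $\lambda_{\mathcal{C}}$ from $\alpha$, and the $\txt{rev}$ bookkeeping in the second component is handled exactly as you describe, via $\mathfrak{I}_{\mathcal{C}} \simeq \mathfrak{L}_{\mathcal{C}^{\op}} \circ \txt{rev}_{\mathcal{C}}^{-1}$. Your proof is therefore complete modulo those routine verifications, and if anything slightly more careful than the published one at the step comparing $q$ with $\epsilon_{\mathcal{C}}$.
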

\begin{proof}
  Observe that the definition of $\epsilon$ implies that we have $\epsilon(\LV)\subseteq \IV \cap
  \LV$. The composite
  \[\Dop_{/\TwL(\mathcal{C})} \xto{\epsilon_{\mathcal{C}}} \Dop_{/\mathcal{C}} \xto{(\mathfrak{I}_{\mathcal{C}},\mathfrak{L}_{\mathcal{C}})} \mathcal{C}^{\op}
  \times \mathcal{C}\] hence takes the morphisms in $\LV_{\TwL(\mathcal{C})}$ to
  equivalences, and so this composite factors uniquely through the localization
  \[\mathfrak{L}_{\TwL(\mathcal{C})} \colon \Dop_{/\TwL(\mathcal{C})}
  \to \Dop_{/\TwL(\mathcal{C})}[\LV^{-1}] \simeq
  \TwL(\mathcal{C}),\] giving a natural commutative square
  \[\csquare{\simp_{/\TwL(\mathcal{C})}}{\DC}{\TwL(\mathcal{C})}{\mathcal{C}^\op
      \times
      \mathcal{C}.}{\epsilon_{\mathcal{C}}}{\mathfrak{L}_{\TwL(\mathcal{C})}}{(\mathfrak{I}_{\mathcal{C}},\mathfrak{L}_{\mathcal{C}})}{\upsilon_{\mathcal{C}}} \]
  It remains to show that the induced functor $\upsilon_{\mathcal{C}}$
  is naturally equivalent to $\eta_{\mathcal{C}}$.

  Viewing the natural transformation $\iota$ as a functor $\simp
  \times \Delta^{1} \to \simp$, the projection $p \colon \TwL(\mathcal{C}) \to
  \mathcal{C}$ is described as a morphism of complete Segal spaces by
  $\iota^{*}\mathcal{C} \colon \Dop \times (\Delta^{1})^{\op} \to
  \mathcal{S}$, corresponding to the right fibration obtained as a
  pullback
  \begin{equation}
    \label{eq:iotasquare}
    \csquare{\iota^{*}\DC}{\DC}{\simp \times
      \Delta^{1}}{\simp}{\iota_{\mathcal{C}}}{}{}{\iota}
  \end{equation}
  The composite functor $\iota^{*}\DC \to \Delta^{1}$ is a cartesian
  fibration, and corresponds to the functor $q \colon \simp_{/\TwL(\mathcal{C})}
  \to \DC$ given by composition with $p$, which fits in a commutative
  square
  \[
    \csquare{\simp_{/\TwL(\mathcal{C})}}{\DC}{\TwL(\mathcal{C})}{\mathcal{C}}{q}{\mathfrak{L}_{\TwL(\mathcal{C})}}{\mathfrak{L}_{\mathcal{C}}}{p}
  \]
  It therefore suffices to show that $q$ is equivalent to
  $\epsilon_{\mathcal{C}}$; to see this we observe that the
  pullback square
  \cref{eq:iotasquare} induces a commutative triangle
  \[
    \begin{tikzcd}
      \iota^{*}\DC \arrow{rr} \arrow{dr} & & \DC \times
      \Delta^{1}\arrow{dl} \\
       & \Delta^{1},
    \end{tikzcd}
  \]
  where the diagonal functors are both cartesian fibrations and the
  horizontal functor preserves cartesian morphisms. We can straighten
  this to a commutative square of \icats{}
  \[
    \begin{tikzcd}
      \simp_{/\TwL(\mathcal{C})}
      \arrow{d}[swap]{\epsilon_{\mathcal{C}}} 
      \arrow{r}{q} & \DC \arrow[equals]{d} \\
      \DC \arrow[equals]{r} & \DC,
    \end{tikzcd}
  \]
  which implies that $q \simeq \epsilon_{\mathcal{C}}$.
  A similar argument works for the projection $\TwL(\mathcal{C}) \to
  \mathcal{C}^{\op}$, which completes the proof.
\end{proof}

The following is a special case of \cite{BarwickMackey}*{Proposition
  2.1}.
\begin{propn}\label{epscoinit}
  $\epsilon \colon \simp \to \simp$ is coinitial. 
\end{propn}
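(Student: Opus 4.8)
The plan is to reduce to Quillen's Theorem A and then recognize the resulting comma categories as categories of simplices of edgewise subdivisions of standard simplices. By the footnote's definition, $\epsilon$ is coinitial precisely when $\epsilon^{\op}$ is cofinal, and by \cite{HTT}*{Theorem 4.1.3.1} (the $\infty$-categorical form of Quillen's Theorem A) this holds as soon as, for every $[m]\in\simp$, the comma category $\epsilon\downarrow[m]$ has weakly contractible nerve. Here $\epsilon\downarrow[m]$ has as objects the pairs $([n],g)$ with $g\colon[n]^{\op}\star[n]\to[m]$ a morphism of $\simp$, and as morphisms $([n],g)\to([n'],g')$ the maps $\psi\colon[n]\to[n']$ of $\simp$ with $g'\circ\epsilon(\psi)=g$. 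So everything reduces to proving this weak contractibility.

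The key point is that $\epsilon\downarrow[m]$ is, essentially by definition, the category of simplices $\simp_{/\epsilon^{*}\Delta^{m}}$ of the simplicial set $\epsilon^{*}\Delta^{m}$ --- which is the (last-vertex) edgewise subdivision of the standard $m$-simplex $\Delta^{m}$: an $n$-simplex of $\epsilon^{*}\Delta^{m}$ is exactly a morphism $[n]^{\op}\star[n]\to[m]$ in $\simp$, and a map $[n]\to[n']$ of $\simp$ acts on these by precomposition with its image under $\epsilon$, which is precisely the morphism data of $\epsilon\downarrow[m]$. Since the nerve of the category of simplices of any simplicial set $Y$ is weakly equivalent to $Y$ (the last-vertex map; this is standard, see e.g.~\cite{HTT}), it now suffices to check that $\epsilon^{*}\Delta^{m}$ is weakly contractible.

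This last fact is classical --- edgewise subdivision preserves homotopy type, and in fact $|\epsilon^{*}\Delta^{m}|\cong|\Delta^{m}|$ --- but it can also be seen by hand: unwinding the definitions, $\epsilon^{*}\Delta^{m}$ is the nerve of the poset $\mathrm{Int}[m]$ of nonempty closed subintervals of $[m]$ ordered by inclusion, where an $n$-simplex $g\colon[n]^{\op}\star[n]\to[m]$ corresponds to the chain $k\mapsto[g(\bar{k}),g(k)]$ for $k=0,\dots,n$, with $\bar{k}$ and $k$ denoting the image of the vertex $k\in[n]$ under the two inclusions of $[n]$ into the join. As $\mathrm{Int}[m]$ has greatest element $[0,m]$, its nerve is contractible, and we are done.

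There is no genuine obstacle once one recognizes $\epsilon$ as edgewise subdivision; the only points needing care are the variance in Quillen's Theorem A (coinitiality of $\epsilon$ is cofinality of $\epsilon^{\op}$, hence the \emph{over} comma categories $\epsilon\downarrow[m]$, not the under ones) and checking that the functoriality of $\epsilon$ matches the simplicial identities defining $\epsilon^{*}\Delta^{m}$. This is, of course, exactly the content of \cite{BarwickMackey}*{Proposition 2.1} in the special case of the representable simplicial sets.
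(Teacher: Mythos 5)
Your proof is correct and follows essentially the same route as the paper's: reduce via \cite{HTT}*{Theorem 4.1.3.1} to weak contractibility of the comma categories $\epsilon\downarrow[m]$, identify these as categories of simplices, pass to the underlying object by a last-vertex/cofinality weak equivalence, and conclude because the poset of intervals of $[m]$ (which is exactly the paper's description of $\TwL[m]$) has a greatest element. The only cosmetic difference is that the paper invokes its own cofinality result for $\mathfrak{L}_{\TwL[m]}\colon \simp_{/\TwL[m]}\to\TwL[m]$ where you cite the classical weak equivalence between a simplicial set and the nerve of its category of simplices.
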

\begin{proof}
  By \cite{HTT}*{Theorem 4.1.3.1} it suffices to show that the
  pullback $\simp \times_{\simp} \simp_{/[n]}$ along $\epsilon$ is  weakly contractible
  for all objects $[n]$ in $\simp$. This pullback we can identify with
  $\simp_{/\TwL [n]}$
  by \cref{lem:DTwCpb}, and we have a cofinal functor
  $\mathfrak{L}_{\TwL [n]} \colon \simp_{/\TwL
    [n]} \to \TwL [n]$ from \cref{cor:DCmapscof}. Since cofinal
  functors are in particular weak homotopy equivalences, it suffices
  to show that the category $\TwL [n]$ is weakly contractible. This
  category can be described as the partially ordered set of pairs
  $(i,j)$ with $0 \leq i \leq j \leq n$, with partial ordering given
  by
  \[ (i,j) \leq (i',j') \iff i' \leq i \leq j \leq j'.\]
  Here $(0,n)$ is a terminal object, and so this category is indeed
  weakly contractible.
\end{proof}

\begin{cor}\label{cor:epsCcoinit}
  The functor $\epsilon_{\mathcal{C}} \colon \simp_{/\TwL(\mathcal{C})} \to \DC$ is coinitial.
\end{cor}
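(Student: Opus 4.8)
The plan is to deduce this formally from \cref{epscoinit}, using that coinitial functors are stable under base change along right fibrations.

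First I would recall the natural pullback square of \cref{lem:DTwCpb},
\[
  \csquare{\simp_{/\TwL(\mathcal{C})}}{\DC}{\simp}{\simp}{\epsilon_{\mathcal{C}}}{}{}{\epsilon}
\]
in which both vertical maps are the canonical projections, and the right-hand one, $\DC \to \simp$, is a right fibration. This square exhibits $\epsilon_{\mathcal{C}} \colon \simp_{/\TwL(\mathcal{C})} \to \DC$ as the base change of $\epsilon \colon \simp \to \simp$ along the right fibration $\DC \to \simp$ (so that, incidentally, the left-hand projection $\simp_{/\TwL(\mathcal{C})} \to \simp$ is itself a right fibration).

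Next I would invoke the stability of coinitial functors under pullback along right fibrations: if $g \colon A \to B$ is coinitial and $E \to B$ is a right fibration, then $A \times_{B} E \to E$ is again coinitial. This is the same principle already exploited in the proof of Fubini's theorem above, and it is dual to the corresponding statement for cofinal functors and left fibrations; concretely it can be checked via the criterion \cite{HTT}*{Theorem 4.1.3.1}, since for a right fibration $E \to B$ and an object $e \in E$ lying over $b \in B$ one has $E_{/e} \simeq B_{/b}$, so that $(A \times_{B} E)_{/e} \simeq A \times_{B} B_{/b}$ is weakly contractible whenever $g$ is coinitial. Applying this with $g = \epsilon$, which is coinitial by \cref{epscoinit}, gives that $\epsilon_{\mathcal{C}}$ is coinitial.

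The only non-formal ingredient here is the base-change stability just described, which is standard; given that and \cref{lem:DTwCpb}, the corollary is essentially immediate, and I do not anticipate any real obstacle.
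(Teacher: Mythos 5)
Your proof is correct and is essentially the paper's own argument: both exhibit $\epsilon_{\mathcal{C}}$ as the base change of the coinitial functor $\epsilon$ (from \cref{epscoinit}) along the fibration $\DC \to \simp$ via the pullback square of \cref{lem:DTwCpb}, and then invoke stability of coinitial functors under such pullbacks. The only difference is that the paper cites (the dual of) \cite{HTT}*{Proposition 4.1.2.15} for cartesian fibrations, whereas you verify the right-fibration special case directly with the criterion of \cite{HTT}*{Theorem 4.1.3.1}; your verification is sound since $E_{/e} \to B_{/b}$ is a trivial fibration for a right fibration $E \to B$.
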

\begin{proof}
  From \cref{epscoinit} and \cref{lem:DTwCpb} we know that this
  functor is the pullback of the coinitial functor $\epsilon$ along
  the cartesian fibration $\DC \to \simp$. It is therefore coinitial
  by (the dual of) \cite{HTT}*{Proposition 4.1.2.15}.
\end{proof}

\begin{cor}\label{cor:endviatw}
  Given a functor $F \colon \mathcal{C}^{\op} \times \mathcal{C} \to
  \mathcal{D}$, its end is given by the limit of the composite
  \[ \TwL(\mathcal{C}) \xto{\eta_{\mathcal{C}}} \mathcal{C}^{\op} \times \mathcal{C} \xto{F} \mathcal{D}.\]
\end{cor}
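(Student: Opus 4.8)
The plan is to chain together the two coinitiality statements established above with the commutative square of \cref{propn:twsquare}. Recall that, after reordering the factors to match the variance of $F$, the end $\int_{\mathcal{C}}^{*}F$ is by definition the limit of the composite
\[ \DC \xto{(\mathfrak{I}_{\mathcal{C}}, \mathfrak{L}_{\mathcal{C}})} \mathcal{C}^{\op} \times \mathcal{C} \xto{F} \mathcal{D}. \]
First I would invoke \cref{cor:epsCcoinit}: the functor $\epsilon_{\mathcal{C}} \colon \simp_{/\TwL(\mathcal{C})} \to \DC$ is coinitial, so the above limit agrees with the limit of $F \circ (\mathfrak{I}_{\mathcal{C}}, \mathfrak{L}_{\mathcal{C}}) \circ \epsilon_{\mathcal{C}}$ over $\simp_{/\TwL(\mathcal{C})}$, with one side existing if and only if the other does (a coinitial functor both preserves limits and detects their existence).

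Next, the commutativity of the square \cref{eq:twsquare} identifies $(\mathfrak{I}_{\mathcal{C}}, \mathfrak{L}_{\mathcal{C}}) \circ \epsilon_{\mathcal{C}}$ with $\eta_{\mathcal{C}} \circ \mathfrak{L}_{\TwL(\mathcal{C})}$, so the previous limit becomes the limit of $F \circ \eta_{\mathcal{C}} \circ \mathfrak{L}_{\TwL(\mathcal{C})}$ over $\simp_{/\TwL(\mathcal{C})}$. Finally I would apply \cref{cor:DCmapscof} to the \icat{} $\TwL(\mathcal{C})$: the functor $\mathfrak{L}_{\TwL(\mathcal{C})} \colon \simp_{/\TwL(\mathcal{C})} \to \TwL(\mathcal{C})$ is coinitial, so this last limit agrees with the limit of $F \circ \eta_{\mathcal{C}}$ over $\TwL(\mathcal{C})$ itself, once more with existence on either side being equivalent. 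Composing these three identifications yields the statement.

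Since every step is an instance of an already-proved result, there is no genuine obstacle here; the only points that require care are bookkeeping ones — matching the ordering of the factors $\mathcal{C}^{\op} \times \mathcal{C}$ in the definition of the end with the orientation of the square \cref{eq:twsquare}, and using that coinitiality gives the "if either exists" clause rather than merely an equivalence of limits assumed to exist.
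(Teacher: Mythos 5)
Your proposal is correct and is essentially identical to the paper's own proof: both chain the coinitiality of $\epsilon_{\mathcal{C}}$ (\cref{cor:epsCcoinit}) and of $\mathfrak{L}_{\TwL(\mathcal{C})}$ (\cref{cor:DCmapscof}) through the commutative square of \cref{propn:twsquare}, merely traversing the chain of equivalences in the opposite direction. The bookkeeping points you flag (factor ordering and the ``if either exists'' clause) are handled the same way in the paper.
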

\begin{proof}
  In the commutative square \cref{eq:twsquare} from
  \cref{propn:twsquare}, the functor $\epsilon_{\mathcal{C}}$ is
  coinitial by \cref{cor:epsCcoinit} while the functor
  $\mathfrak{L}_{\TwL(\mathcal{C})}$ is coinitial by
  \cref{cor:DCmapscof}. We therefore have natural equivalences
  \[
    \begin{split}
    \lim_{\TwL(\mathcal{C})} F \circ \eta_{\mathcal{C}} & \simeq  \lim_{\simp_{/\TwL(\mathcal{C})}} F \circ \eta_{\mathcal{C}} \circ
    \mathfrak{L}_{\TwL(\mathcal{C})} \\
     & \simeq \lim_{\simp_{/\TwL(\mathcal{C})}} F \circ
    (\mathfrak{I}_{\mathcal{C}},\mathfrak{L}_{\mathcal{C}}) \circ
    \epsilon_{\mathcal{C}} \\
    &      \simeq \lim_{\simp_{/\mathcal{C}}} F \circ (\mathfrak{I}_{\mathcal{C}},\mathfrak{L}_{\mathcal{C}}),
    \end{split}
  \]
  where the latter is the end of $F$ as we defined it above.
\end{proof}

\begin{remark}\label{rmk:coendtw}
  Dually, for a functor $F \colon \mathcal{C} \times \mathcal{C}^{\op}
  \to \mathcal{D}$, its coend can be computed either as the colimit of
  the composite 
  \[ \DCop \xto{(\mathfrak{I}_{\mathcal{C}}^{\op},
      \mathfrak{L}_{\mathcal{C}}^{\op})} \mathcal{C} \times
      \mathcal{C}^{\op} \xto{F} \mathcal{D},\]
    or as the colimit of
    \[ \Tw^{r}(\mathcal{C}) \simeq \TwL(\mathcal{C})^{\op}
      \xto{\eta_{\mathcal{C}}^{\op}} \mathcal{C} \times
      \mathcal{C}^{\op} \xto{F} \mathcal{D}.\]
\end{remark}

\section{Weighted (Co)limits}\label{sec:weight}
Weighted (co)limits can be defined as certain (co)ends. Our goal in this
section is to show that they can also be expressed as (co)limits over
left and right fibrations, respectively. The latter description agrees with the definition of weighted
(co)limits studied by Rovelli~\cite{RovelliWeight} in terms of an explicit construction in
quasicategories.

Given a presheaf $W \colon \mathcal{I}^{\op} \to \mathcal{S}$ and a
functor $\phi \colon \mathcal{I} \to \mathcal{C}$, the \emph{colimit
  of $\phi$ weighted by $W$}, denoted $\colim_{\mathcal{I}}^{W}\phi$,
can be defined as the coend of the functor
$W \times \phi \colon \mathcal{I}^{\op} \times \mathcal{I} \to
\mathcal{C}$, at least if $\mathcal{C}$ admits colimits indexed by
$\infty$-groupoids. Similarly, for
$\psi \colon \mathcal{I}^{\op} \to \mathcal{C}$ the \emph{limit of
  $\psi$ weighted by $W$}, denoted
$\lim^{W}_{\mathcal{I}^{\op}} \phi$, can be defined as the end of the
functor
$\psi^{W} \colon \mathcal{I} \times \mathcal{I}^{\op} \to
\mathcal{C}$, provided $\mathcal{C}$ admits limits indexed by
\igpds{}. One can also characterize weighted limits and colimits in
terms of universal properties, as we have
\[ \Map_{\mathcal{C}}(\colim^{W}_{\mathcal{I}} \phi, c) \simeq
  \lim^{W}_{\mathcal{I}^{\op}} \Map_{\mathcal{C}}(\phi, c), \]
\[ \Map_{\mathcal{C}}(c, \lim^{W}_{\mathcal{I}^{\op}} \psi) \simeq
  \lim^{W}_{\mathcal{I}^{\op}} \Map_{\mathcal{C}}(c,\psi). \] Since
all weighted limits exist in $\mathcal{S}$, this also gives a
definition of weighted (co)limits without any assumptions on
$\mathcal{C}$.

The key property of weighted limits is that they describe mapping
spaces in functor categories. We state this in the case of presheaves:
\begin{thm}[Glasman]\label{thm:natend}
  For presheaves $\phi, \psi \in \mathcal{P}(\mathcal{I})$ we have a
  natural equivalence
  \[ \Map_{\mathcal{P}(\mathcal{I})}(\phi, \psi) \simeq
    \lim_{\mathcal{I}^{\op}}^{\phi} \psi.\]
\end{thm}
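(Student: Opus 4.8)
The plan is to prove Glasman's theorem \cref{thm:natend} by expressing the weighted limit on the right-hand side as an ordinary limit over a twisted arrow \icat{}, using \cref{cor:endviatw}, and then recognizing that limit as the mapping space via the Yoneda lemma for $\mathcal{P}(\mathcal{I})$. Concretely, the weighted limit $\lim^{\phi}_{\mathcal{I}^{\op}} \psi$ is by definition the end of the functor $\psi^{\phi} \colon \mathcal{I} \times \mathcal{I}^{\op} \to \mathcal{S}$ sending $(i, j)$ to $\psi(j)^{\phi(i)} = \lim_{\phi(i)} \psi(j)$, i.e.\ to $\Map_{\mathcal{S}}(\phi(i), \psi(j))$. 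By \cref{cor:endviatw} this end is computed as $\lim_{\TwL(\mathcal{I}^{\op})} F \circ \eta_{\mathcal{I}^{\op}}$, where $F(j, i) = \Map_{\mathcal{S}}(\phi(i), \psi(j))$; equivalently, using $\TwL(\mathcal{I}^{\op}) \simeq \Tw^{r}(\mathcal{I})^{\op}$, as a limit over $\Tw^{r}(\mathcal{I})^{\op}$ of $(j \to i) \mapsto \Map_{\mathcal{S}}(\phi(i), \psi(j))$.

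The key step is then to identify this limit with $\Map_{\mathcal{P}(\mathcal{I})}(\phi, \psi)$. First I would reduce to the representable case: both sides send colimits in $\phi$ to limits, and every presheaf is a colimit of representables, so it suffices to treat $\phi = \Yo(c)$ for $c \in \mathcal{I}$. (Dually, both sides send colimits in $\psi$ to limits, and one can reduce in $\psi$ as well; more efficiently, the representable reduction in $\phi$ alone already pins down the answer.) When $\phi = \Yo(c)$, the integrand becomes $\Map_{\mathcal{S}}(\Map_{\mathcal{I}}(c, i), \psi(j))$, and the twisted arrow \icat{} $\Tw^{r}(\mathcal{I})$ has a right fibration to $\mathcal{I} \times \mathcal{I}^{\op}$; pulling back along $(j \to i) \mapsto i$ and using that $\Tw^{r}(\mathcal{I}) \to \mathcal{I}$ (target projection) has fibers $\mathcal{I}_{i/}$ with initial object $\id_i$, one sees the limit over $\Tw^{r}(\mathcal{I})^{\op}$ collapses. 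The cleanest way to make this collapse precise is to observe that $c \in \mathcal{I}$ determines a section-like cofinal/coinitial inclusion picking out the arrow $\id_c$, or alternatively to invoke \cref{propn:pshrfibcolim}: $\Yo(c)$ is corepresented and $\TwL$ computes $\Map_{\mathcal{I}}$, so the weighted limit reduces to $\lim_{j} \Map_{\mathcal{S}}(\Map_{\mathcal{I}}(c,?), \psi(j))$ over the appropriate diagram, which is $\psi(c)$ by the \icatl{} Yoneda lemma, matching $\Map_{\mathcal{P}(\mathcal{I})}(\Yo(c), \psi) \simeq \psi(c)$.

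The main obstacle I anticipate is making the representable reduction genuinely \emph{natural} — the theorem asserts a natural equivalence in both $\phi$ and $\psi$, so it is not enough to check the two sides agree objectwise; one needs a natural transformation inducing the equivalence. The right way to handle this is to produce the comparison map at the level of functors: there is a canonical map $\Map_{\mathcal{P}(\mathcal{I})}(\phi, \psi) \to \lim^{\phi}_{\mathcal{I}^{\op}}\psi$ coming from the fact that both are limits of the "tautological" diagram $(i,j) \mapsto \Map_{\mathcal{S}}(\phi(i), \psi(j))$ over appropriate shapes — indeed, $\Map_{\mathcal{P}(\mathcal{I})}(\phi, \psi)$ is the end computed via $\TwL(\mathcal{I}^{\op})$ by \cref{cor:endviatw} applied to a suitable functor, once one knows that mapping spaces in $\mathcal{P}(\mathcal{I})$ are themselves expressed as such an end (this is the content one is proving, so care is needed to avoid circularity). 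I would instead set up the natural transformation as the composite $\Map_{\mathcal{P}(\mathcal{I})}(\phi,\psi) \to \lim_{i} \Map_{\mathcal{P}(\mathcal{I})}(\phi(i) \otimes \Yo(i)?, \psi) \to \cdots$, or — most robustly — deduce everything from \cite{HTT}*{Lemma 5.1.5.3} and the end formula by writing $\Map_{\mathcal{P}(\mathcal{I})}(\phi, \psi) \simeq \lim_{(c \to \phi) \in \mathcal{I}_{/\phi}^{\op}} \psi(c)$ (the pointwise formula for presheaf mapping spaces), then matching $\mathcal{I}_{/\phi}$ with the relevant twisted-arrow-type diagram indexing the end of $\psi^{\phi}$. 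Verifying that these two indexing \icats{} agree, naturally in $\phi$ and $\psi$, is the technical heart of the argument; everything else is formal manipulation of (co)ends already justified earlier in the paper.
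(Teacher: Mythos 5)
First, a point of comparison: the paper does not prove \cref{thm:natend} at all --- it is quoted from \cite{GlasmanTHHHodge}*{Proposition 2.3} and \cite{freepres}*{Proposition 5.1} --- so your proposal is being measured against those external proofs rather than against anything in the text. Your overall strategy (build a natural comparison map, observe that both sides carry colimits in $\phi$ to limits, reduce to $\phi = \Yo(c)$, and conclude by Yoneda) is a legitimate and non-circular route given the earlier results of the paper: \cref{cor:endviatw} and \cref{propn:pshrfibcolim} are established independently of \cref{thm:natend}, so you are free to use them.

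However, as written the proposal has two genuine gaps, both of which you flag but neither of which you close. (1) The reduction to representables cannot even begin until a natural transformation $\Map_{\mathcal{P}(\mathcal{I})}(\phi,\psi) \to \lim^{\phi}_{\mathcal{I}^{\op}}\psi$ (or its inverse) has actually been constructed; every candidate you offer is either left as ``$\cdots$'', marked with a question mark, or risks the circularity you yourself identify. Without this map, ``both sides agree on representables'' proves nothing about general $\phi$. (2) The representable case is precisely the ``ninja Yoneda lemma'' $\int^{*}_{\mathcal{I}^{\op}}\Map_{\mathcal{S}}(\Map_{\mathcal{I}}(\blank,c),\psi) \simeq \psi(c)$, which is a real statement needing proof. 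Your argument for it --- that the fibers of the target projection of $\Tw^{r}(\mathcal{I})$ have initial objects, so ``the limit collapses'' --- is not a valid coinitiality argument: a limit over $\Tw^{r}(\mathcal{I})^{\op}$ does not collapse because a projection of the indexing \icat{} has fibers with initial objects. The correct indexing \icat{} here is not $\Tw^{r}(\mathcal{I})^{\op}$ but the total space of a further fibration over it obtained by unwinding $\Map_{\mathcal{S}}(\Map_{\mathcal{I}}(i,c),\blank)$ as a limit over the space $\Map_{\mathcal{I}}(i,c)$; one must identify that total space (roughly a twisted arrow \icat{} of a slice of $\mathcal{I}$ over $c$ --- note also the variance slip: $\Yo(c)(i) = \Map_{\mathcal{I}}(i,c)$, not $\Map_{\mathcal{I}}(c,i)$) and exhibit an initial object there. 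The most robust route is the one you gesture at in your final sentences --- straightening, identifying $\Map_{\mathcal{P}(\mathcal{I})}(\phi,\psi)$ with the space of sections of a pulled-back fibration as in the proof of \cref{propn:pshrfibcolim}, and matching the resulting limit with the end --- which handles naturality and the comparison map simultaneously and is essentially the argument of \cite{freepres}; but you explicitly defer this ``technical heart,'' so the proposal is a plan rather than a proof.
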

\begin{proof}
  This is a special case of \cite{GlasmanTHHHodge}*{Proposition 2.3}
  or \cite{freepres}*{Proposition 5.1}.
\end{proof}

\begin{remark}\label{rmk:Yonedawt}
  As a consequence, the Yoneda lemma implies that we can express any
  presheaf as a colimit weighted by itself:
  \begin{equation}
    \label{eq:Yonedacolim}
    \phi \simeq \colim^{\phi}_{\mathcal{I}}
    \Yo_{\mathcal{I}}
  \end{equation}
  for $\phi \in \mathcal{P}(\mathcal{I})$,
  where $\Yo_{\mathcal{I}} \colon \mathcal{I} \to
  \mathcal{P}(\mathcal{I})$ is the Yoneda embedding.
  This follows from the equivalences
  \[\Map_{\mathcal{P}(\mathcal{I})}(\colim^{\phi}_{\mathcal{I}} \Yo_{\mathcal{I}}, \psi)
    \simeq \lim^{\phi}_{\mathcal{I}^{\op}}
    \Map_{\mathcal{P}(\mathcal{I})}(\Yo_{\mathcal{I}}, \psi) \simeq
    \lim^{\phi}_{\mathcal{I}^{\op}} \psi \simeq
    \Map_{\mathcal{P}(\mathcal{I})}(\phi, \psi).\]
\end{remark}

\begin{propn}\label{wtlimlfib}
  Suppose $q \colon \mathcal{V} \to \mathcal{J}$ is the left fibration
  corresponding to a functor $V \colon \mathcal{J} \to
  \mathcal{S}$. Then for a functor $\psi \colon \mathcal{J} \to
  \mathcal{C}$ there is an equivalence
  \begin{equation}
    \label{eq:wtlimlfib}
   \lim^{V}_{\mathcal{J}} \psi \simeq \lim_{\mathcal{V}} \psi \circ
    q,
  \end{equation}
  provided either side exists.
\end{propn}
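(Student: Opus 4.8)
The plan is to reduce the statement to the universal property of weighted limits together with Theorem~\ref{thm:natend} and Proposition~\ref{propn:pshrfibcolim}. First I would recall that $\lim^{V}_{\mathcal{J}}\psi$ is characterized by the natural equivalence
\[ \Map_{\mathcal{C}}(c, \lim^{V}_{\mathcal{J}}\psi) \simeq \lim^{V}_{\mathcal{J}}\Map_{\mathcal{C}}(c, \psi), \]
so it suffices to establish the corresponding equivalence with $\lim_{\mathcal{V}}\psi\circ q$ on the right, naturally in $c$; since limits in $\mathcal{C}$ are detected by the functors $\Map_{\mathcal{C}}(c,\blank)$, this reduces us to the case $\mathcal{C} = \mathcal{S}$, i.e.\ to showing $\lim^{V}_{\mathcal{J}}\psi \simeq \lim_{\mathcal{V}}\psi\circ q$ for $\psi \colon \mathcal{J} \to \mathcal{S}$.

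In the case $\mathcal{C}=\mathcal{S}$, I would identify $\lim^{V}_{\mathcal{J}}\psi$ with a mapping space of presheaves using Theorem~\ref{thm:natend}: taking $\mathcal{I} = \mathcal{J}^{\op}$ (or working directly with the covariant variant of that theorem), we get $\lim^{V}_{\mathcal{J}}\psi \simeq \Map_{\Fun(\mathcal{J},\mathcal{S})}(V, \psi)$, where $V$ is the copresheaf classifying $q$. Now I would compute this mapping space by writing $V$ as a colimit. By Proposition~\ref{propn:pshrfibcolim} (applied to $\mathcal{J}^{\op}$, so that $q^{\op}\colon \mathcal{V}^{\op}\to\mathcal{J}^{\op}$ is the relevant right fibration and $V$ the associated presheaf on $\mathcal{J}^{\op}$), we have
\[ V \simeq \colim_{\mathcal{V}} \Yo \circ q, \]
where $\Yo \colon \mathcal{J} \to \Fun(\mathcal{J},\mathcal{S})$ is the (co)Yoneda embedding. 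Hence
\[ \Map_{\Fun(\mathcal{J},\mathcal{S})}(V, \psi) \simeq \lim_{\mathcal{V}} \Map_{\Fun(\mathcal{J},\mathcal{S})}(\Yo \circ q, \psi) \simeq \lim_{\mathcal{V}} \psi \circ q, \]
where the last step is the Yoneda lemma applied objectwise. Combining these equivalences gives $\lim^{V}_{\mathcal{J}}\psi \simeq \lim_{\mathcal{V}}\psi\circ q$ in $\mathcal{S}$, and feeding this back through $\Map_{\mathcal{C}}(c,\blank)$ yields the general statement; the clause about existence follows since each equivalence above holds whenever one side is defined, and $\Map_{\mathcal{C}}(c,\blank)$ reflects limits.

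The main obstacle is bookkeeping around variance: Theorem~\ref{thm:natend} and Proposition~\ref{propn:pshrfibcolim} are stated for presheaves and right fibrations, whereas a weight $V \colon \mathcal{J} \to \mathcal{S}$ for a limit over $\mathcal{J}$ corresponds to a \emph{left} fibration, and in the definition of $\lim^{V}_{\mathcal{J}}\psi$ as an end the weight appears in the contravariant slot. I would handle this by consistently passing to $\mathcal{J}^{\op}$ at the outset — so that $V$ becomes a presheaf on $\mathcal{J}^{\op}$ classifying the right fibration $\mathcal{V}^{\op} \to \mathcal{J}^{\op}$ — and by being careful that the end defining the weighted limit is taken over $\simp_{/\mathcal{J}}$ with $\mathfrak{I}$ and $\mathfrak{L}$ landing in the correct factors; a remark recording that this is exactly the setup of Theorem~\ref{thm:natend} should suffice to make the identification rigorous. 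Everything else is a formal manipulation of adjunctions and the Yoneda lemma.
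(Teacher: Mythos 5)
Your proposal is correct, and its first half (reduction to $\mathcal{C}=\mathcal{S}$ via the universal mapping property, then applying Theorem~\ref{thm:natend} to identify $\lim^{V}_{\mathcal{J}}\Psi$ with $\Map_{\Fun(\mathcal{J},\mathcal{S})}(V,\Psi)$) is exactly what the paper does. Where you diverge is in computing that mapping space: the paper unstraightens, rewriting $\Map_{\Fun(\mathcal{J},\mathcal{S})}(V,\Psi)\simeq\Map_{/\mathcal{J}}(\mathcal{V},\mathcal{E})\simeq\Map_{/\mathcal{V}}(\mathcal{V},q^{*}\mathcal{E})$ and then invoking \cite{HTT}*{Corollary 3.3.3.4} to identify the space of sections of $q^{*}\mathcal{E}$ with $\lim_{\mathcal{V}}\Psi\circ q$; you instead decompose the weight as a colimit of corepresentables via Proposition~\ref{propn:pshrfibcolim} and finish with the Yoneda lemma. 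The two routes are close cousins --- the paper's proof of Proposition~\ref{propn:pshrfibcolim} is itself the unstraightening-and-sections argument --- so you are essentially reusing an already-proved lemma where the paper redoes the argument by hand; your version arguably makes the logical dependencies cleaner, at the cost of one extra layer of dualization. On that point, one piece of bookkeeping to fix: Proposition~\ref{propn:pshrfibcolim} applied to the right fibration $q^{\op}\colon\mathcal{V}^{\op}\to\mathcal{J}^{\op}$ exhibits $V$ as a colimit indexed by $\mathcal{V}^{\op}$ (the total space of that right fibration), not by $\mathcal{V}$; mapping out of this colimit then produces a limit over $(\mathcal{V}^{\op})^{\op}=\mathcal{V}$, which is what makes your final display come out right. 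As written, your intermediate formula $V\simeq\colim_{\mathcal{V}}\Yo\circ q$ and the subsequent $\lim_{\mathcal{V}}$ are not mutually consistent, though the end result is correct; this is precisely the variance issue you flag yourself, and it is easily repaired.
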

\begin{proof}
  By the universal mapping properties of the two sides it suffices to
  show there is an equivalence
  \[ \lim^{V}_{\mathcal{J}} \Map_{\mathcal{C}}(c, \psi) \simeq
    \lim_{\mathcal{V}} \Map_{\mathcal{C}}(c, \psi \circ q),\]
  natural in $c \in \mathcal{C}$. In other words, it suffices to show
  there is a natural equivalence \cref{eq:wtlimlfib} for functors
  $\Psi \colon \mathcal{J} \to \mathcal{S}$.
  
  Using \cref{thm:natend} and the straightening equivalence, we can
  rewrite the left-hand side as
  \[ \lim^{V}_{\mathcal{J}} \Psi \simeq
    \Map_{\Fun(\mathcal{J},\mathcal{S})}(V, \Psi) \simeq
    \Map_{/\mathcal{J}}(\mathcal{V}, \mathcal{E}),\]
  where $\mathcal{E} \to \mathcal{J}$ is the left fibration for
  $\Psi$. We now have an obvious equivalence
  \[ \Map_{/\mathcal{J}}(\mathcal{V}, \mathcal{E}) \simeq
    \Map_{/\mathcal{V}}(\mathcal{V}, q^{*}\mathcal{E}),\]
  so our weighted limit is naturally equivalent to the space of sections
  of the left fibration $q^{*}\mathcal{E}$. Since pullback of left
  fibrations corresponds to composition of functors to $\mathcal{S}$,
  this is the left fibration for $\Psi \circ q$.  Moreover, the space
  of sections of a left fibration is equivalent to the limit of the
  corresponding functor to $\mathcal{S}$ by \cite{HTT}*{Corollary
    3.3.3.4}, so that we have
  \[ \Map_{/\mathcal{V}}(\mathcal{V}, q^{*}\mathcal{E}) \simeq
    \lim_{\mathcal{V}} \Psi \circ q,\] 
  as required.
\end{proof}

\begin{cor}\label{wtcolimrfib}
  Suppose $p \colon \mathcal{W} \to \mathcal{I}$ is the right
  fibration corresponding to a presheaf $W \colon \mathcal{I}^{\op}
  \to \mathcal{S}$. Then for a functor $\phi \colon \mathcal{I} \to
  \mathcal{C}$ there is an equivalence
  \begin{equation}
    \label{eq:wtcolimrfibeq}
    \colim^{W}_{\mathcal{I}} \phi \simeq \colim_{\mathcal{W}} \phi
    \circ p,
  \end{equation}
  provided either side exists.
\end{cor}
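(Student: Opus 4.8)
The plan is to deduce this from \cref{wtlimlfib} by dualization, following the same strategy as the proof of that proposition. Since both sides of \cref{eq:wtcolimrfibeq} are characterized, when they exist, by the universal property of a colimit, by the Yoneda lemma it suffices to produce an equivalence
\[ \Map_{\mathcal{C}}(\colim^{W}_{\mathcal{I}}\phi, c) \simeq \Map_{\mathcal{C}}(\colim_{\mathcal{W}}\phi\circ p, c) \]
natural in $c\in\mathcal{C}$; as in the proof of \cref{wtlimlfib} this also disposes of the clause ``provided either side exists'', since such an equivalence identifies the two functors $\mathcal{C}\to\mathcal{S}$ of which $\colim^{W}_{\mathcal{I}}\phi$ and $\colim_{\mathcal{W}}\phi\circ p$ are the (potential) representing objects, so that one is representable precisely when the other is.

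By the defining universal property of weighted colimits the left-hand side is $\lim^{W}_{\mathcal{I}^{\op}}\Map_{\mathcal{C}}(\phi, c)$, the limit of $\Map_{\mathcal{C}}(\phi, c) \colon \mathcal{I}^{\op}\to\mathcal{S}$ weighted by $W$, while the Yoneda lemma rewrites the right-hand side as $\lim_{\mathcal{W}^{\op}}\Map_{\mathcal{C}}(\phi\circ p, c)$. Now I would apply \cref{wtlimlfib} with $\mathcal{J} := \mathcal{I}^{\op}$, with the weight $V := W$ regarded as a functor $\mathcal{I}^{\op}\to\mathcal{S}$, and with $q := p^{\op} \colon \mathcal{W}^{\op}\to\mathcal{I}^{\op}$, which is the left fibration classifying $V$ because the opposite of a right fibration over $\mathcal{I}$ is the left fibration over $\mathcal{I}^{\op}$ for the same presheaf. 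Taking $\psi := \Map_{\mathcal{C}}(\phi, \blank) \colon \mathcal{I}^{\op}\to\Fun(\mathcal{C},\mathcal{S})$ (so as to keep track of naturality in $c$), the proposition gives
\[ \lim^{W}_{\mathcal{I}^{\op}}\Map_{\mathcal{C}}(\phi, \blank) \simeq \lim_{\mathcal{W}^{\op}}\bigl(\Map_{\mathcal{C}}(\phi, \blank)\circ p^{\op}\bigr) \]
in $\Fun(\mathcal{C},\mathcal{S})$, and since $\Map_{\mathcal{C}}(\phi, \blank)\circ p^{\op} = \Map_{\mathcal{C}}(\phi\circ p, \blank)$ this is precisely the natural equivalence we want.

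One could instead argue directly in $\mathcal{C}^{\op}$: the weighted colimit $\colim^{W}_{\mathcal{I}}\phi$ in $\mathcal{C}$ is the weighted limit $\lim^{W}_{\mathcal{I}^{\op}}\phi^{\op}$ in $\mathcal{C}^{\op}$ --- this follows from the definition of weighted (co)limits as (co)ends, using that a coend in $\mathcal{C}$ is an end in $\mathcal{C}^{\op}$ (compare the identification $\simp_{/\mathcal{I}^{\op}}\simeq\simp_{/\mathcal{I}}$ exchanging $\mathfrak{I}$ and $\mathfrak{L}$ from \S\ref{sec:simpcat}) together with the fact that a colimit over an \igpd{} in $\mathcal{C}$ is a limit over it in $\mathcal{C}^{\op}$ --- and likewise $\colim_{\mathcal{W}}\phi\circ p$ in $\mathcal{C}$ is $\lim_{\mathcal{W}^{\op}}(\phi\circ p)^{\op}$ in $\mathcal{C}^{\op}$, so that \cref{wtlimlfib} applied in $\mathcal{C}^{\op}$ gives the claim at once. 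In either approach I do not anticipate a substantive obstacle: the only thing requiring care is the bookkeeping of variances --- making sure that under dualization $W$, $\phi$ and $p$ line up correctly and that the weight is genuinely preserved rather than replaced by an opposite.
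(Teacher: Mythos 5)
Your proposal is correct and matches the paper's argument: the paper likewise applies \cref{wtlimlfib} with $\mathcal{J}=\mathcal{I}^{\op}$, $q=p^{\op}$, and $\psi=\Map_{\mathcal{C}}(\phi,c)$, and then concludes by the universal mapping properties of the two colimits. Your extra care about naturality in $c$ and the alternative dualization in $\mathcal{C}^{\op}$ are fine but not substantively different from what the paper does.
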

\begin{proof}
  By \cref{wtlimlfib} we have an equivalence
  \[ \lim^{W}_{\mathcal{I}^{\op}} \Map_{\mathcal{C}}(\phi, c) \simeq
    \lim_{\mathcal{W}^{\op}} \Map_{\mathcal{C}}(\phi p, c),\]
  natural in $c \in \mathcal{C}$. This implies \cref{eq:wtcolimrfibeq}
  by the universal mapping properties of the two objects.
\end{proof}

\begin{cor}
  The definition of weighted limit from \cite{RovelliWeight} agrees
  with the definition as an end.
\end{cor}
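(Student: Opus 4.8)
The plan is to recall Rovelli's explicit construction and match it against the description we have just proved in \cref{wtlimlfib}. Rovelli~\cite{RovelliWeight} defines, for a weight $V \colon \mathcal{J} \to \mathcal{S}$ (presented as a simplicial set mapping to $N(\simp^{\op})$-style data, or more simply as a left fibration of quasicategories $q \colon \mathcal{V} \to \mathcal{J}$ via straightening) and a diagram $\psi \colon \mathcal{J} \to \mathcal{C}$, the weighted limit to be the limit of the composite $\psi \circ q \colon \mathcal{V} \to \mathcal{C}$. So the content of the corollary is simply that this agrees with our $\lim^{V}_{\mathcal{J}}\psi$ defined as an end. First I would note that the straightening/unstraightening equivalence identifies Rovelli's input datum (a weight, in whatever model she uses) with a functor $\mathcal{J} \to \mathcal{S}$, and correspondingly her ``weighted limit object'' with $\lim_{\mathcal{V}} \psi \circ q$ for $\mathcal{V} \to \mathcal{J}$ the associated left fibration. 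Then I would invoke \cref{wtlimlfib} directly: it gives a natural equivalence $\lim^{V}_{\mathcal{J}} \psi \simeq \lim_{\mathcal{V}} \psi \circ q$, so the two definitions coincide.

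The main obstacle here is not mathematical depth but bookkeeping across models: Rovelli works with an explicit combinatorial construction in quasicategories, so one must check that her construction models the homotopy limit of $\psi \circ q$ (equivalently, that it computes the space of sections of the associated left fibration, as in \cite{HTT}*{Corollary 3.3.3.4}), and that her notion of ``weight'' corresponds under straightening to a functor valued in spaces. Both of these are essentially verifications that her definitions are homotopically correct, which is what one expects of a careful paper; the key step is therefore to cite the relevant comparison in \cite{RovelliWeight} and then let \cref{wtlimlfib} do the rest. I would keep the proof to two sentences: one identifying the datum and the construction under straightening, one applying \cref{wtlimlfib}.

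\begin{proof}
  Under the straightening equivalence, a weight in the sense of
  \cite{RovelliWeight} corresponds to a functor $V \colon \mathcal{J}
  \to \mathcal{S}$, classified by a left fibration $q \colon
  \mathcal{V} \to \mathcal{J}$, and Rovelli's weighted limit of a
  diagram $\psi \colon \mathcal{J} \to \mathcal{C}$ is by construction
  the limit of $\psi \circ q$. By \cref{wtlimlfib} this agrees with
  $\lim^{V}_{\mathcal{J}} \psi$, the weighted limit defined as an end.
\end{proof}
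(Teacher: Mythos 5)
Your proposal is correct and matches the paper's proof, which simply reads ``Combine \cref{wtlimlfib} with \cite{RovelliWeight}*{Theorem D}'': the cited Theorem D is exactly the comparison you flag as needing to be quoted, identifying Rovelli's combinatorial construction with the limit over the associated left fibration, after which \cref{wtlimlfib} finishes the argument.
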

\begin{proof}
  Combine \cref{wtlimlfib} with
  \cite{RovelliWeight}*{Theorem D}.
\end{proof}

As a special case of these descriptions of weighted (co)limits we get
another characterization of (co)ends:
\begin{cor}\label{cor:coendweight}
  For a functor $F \colon \mathcal{I}^{\op} \times \mathcal{I} \to
  \mathcal{C}$ there are natural equivalences
  \[ \int_{\mathcal{I}} F \simeq
    \colim_{\mathcal{I} \times \mathcal{I}^{\op}}^{\Map_{\mathcal{I}}(\blank,\blank)} F, \qquad
    \int^{*}_{\mathcal{I}} F \simeq
    \lim_{\mathcal{I}^{\op} \times
      \mathcal{I}}^{\Map_{\mathcal{I}}(\blank,\blank)} F, \]
  provided either side exists in $\mathcal{C}$.
\end{cor}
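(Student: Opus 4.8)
The plan is to reduce this corollary to the definition of (co)ends via $\simp_{/\mathcal{I}}$ combined with the weighted (co)limit descriptions already established. Recall that $\int_{\mathcal{I}} F$ is defined as the colimit of $F \circ (\mathfrak{I}_{\mathcal{I}}^{\op}, \mathfrak{L}_{\mathcal{I}}^{\op}) \colon \DCop[\mathcal{I}] \to \mathcal{I} \times \mathcal{I}^{\op} \to \mathcal{C}$, and dually for the end. On the other hand, $\colim_{\mathcal{I} \times \mathcal{I}^{\op}}^{W} F$ for the weight $W = \Map_{\mathcal{I}}(\blank,\blank) \colon (\mathcal{I} \times \mathcal{I}^{\op})^{\op} = \mathcal{I}^{\op} \times \mathcal{I} \to \mathcal{S}$ is, by \cref{wtcolimrfib}, the colimit of $F \circ p$ where $p \colon \mathcal{W} \to \mathcal{I} \times \mathcal{I}^{\op}$ is the right fibration associated to the mapping space functor. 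So the whole statement comes down to identifying the right fibration for $\Map_{\mathcal{I}}(\blank,\blank)$ with (something cofinally equivalent to) $\DCop[\mathcal{I}] \to \mathcal{I} \times \mathcal{I}^{\op}$.

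The key observation is that the right fibration over $\mathcal{I} \times \mathcal{I}^{\op}$ classifying $\Map_{\mathcal{I}}(\blank,\blank)$ is precisely the right twisted arrow \icat{} $\Tw^{r}(\mathcal{I}) \to \mathcal{I} \times \mathcal{I}^{\op}$ — this is exactly the content of the proposition of Lurie and \cite{cois} quoted in \S\ref{sec:tw} (the left fibration $\TwL(\mathcal{I}) \to \mathcal{I}^{\op} \times \mathcal{I}$ classifies $\Map_{\mathcal{I}}(\blank,\blank)$, so its opposite $\Tw^{r}(\mathcal{I}) \to \mathcal{I} \times \mathcal{I}^{\op}$ is the corresponding right fibration). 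Hence by \cref{wtcolimrfib} we have
\[ \colim_{\mathcal{I} \times \mathcal{I}^{\op}}^{\Map_{\mathcal{I}}(\blank,\blank)} F \simeq \colim_{\Tw^{r}(\mathcal{I})} F \circ \eta_{\mathcal{I}}^{\op}, \]
and the right-hand side is exactly the coend computed via the twisted arrow \icat{}, as recorded in \cref{rmk:coendtw}. Since that remark (via \cref{cor:endviatw} and its dual) already identifies this with $\int_{\mathcal{I}} F$, we are done with the coend case; the end case is the dual statement, using that $\TwL(\mathcal{I}) \to \mathcal{I}^{\op} \times \mathcal{I}$ is the left fibration for $\Map_{\mathcal{I}}(\blank,\blank)$ together with \cref{wtlimlfib} and \cref{cor:endviatw}.

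The only mild subtlety — and the step I expect to require the most care — is bookkeeping the variance: one must be careful that the weight $W \colon \mathcal{I}^{\op} \times \mathcal{I} \to \mathcal{S}$ appearing in $\colim^{W}_{\mathcal{I} \times \mathcal{I}^{\op}}$ is a \emph{presheaf} on $\mathcal{I} \times \mathcal{I}^{\op}$, i.e.\ a functor out of $(\mathcal{I} \times \mathcal{I}^{\op})^{\op} = \mathcal{I}^{\op} \times \mathcal{I}$, so that $\Map_{\mathcal{I}}(\blank,\blank)$ genuinely is such a presheaf (contravariant in the first variable, covariant in the second), and that the convention for weighted colimits in \cref{wtcolimrfib} matches the convention used in the definition of coends via weighted colimits earlier in \S\ref{sec:weight} (where $\colim^{W}_{\mathcal{I}}\phi$ is defined as the coend of $W \times \phi$). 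Once the variance is lined up, the proof is the one-line chain of equivalences above, so I would simply write: \emph{Combine \cref{wtcolimrfib} (resp.\ \cref{wtlimlfib}) with the identification of the right (resp.\ left) fibration for $\Map_{\mathcal{I}}(\blank,\blank)$ as $\Tw^{r}(\mathcal{I})$ (resp.\ $\TwL(\mathcal{I})$) and \cref{rmk:coendtw} (resp.\ \cref{cor:endviatw}).}
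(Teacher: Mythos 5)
Your argument is exactly the paper's proof: identify the right (resp.\ left) fibration classified by $\Map_{\mathcal{I}}(\blank,\blank)$ with $\Tw^{r}(\mathcal{I}) \to \mathcal{I}\times\mathcal{I}^{\op}$ (resp.\ $\TwL(\mathcal{I}) \to \mathcal{I}^{\op}\times\mathcal{I}$), apply \cref{wtcolimrfib} and \cref{wtlimlfib}, and conclude via \cref{rmk:coendtw} and \cref{cor:endviatw}. The only quibble is your attribution of that identification: the proposition quoted in \S\ref{sec:tw} asserts only that $\eta_{\mathcal{I}}$ is a left fibration, not which functor it classifies; the correct reference is \cite{HA}*{Proposition 5.2.1.11}, which is what the paper cites.
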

\begin{proof}
  By \cite{HA}*{Proposition 5.2.1.11} the mapping space functor
  $\Map_{\mathcal{I}}(\blank,\blank)$ corresponds to the right
  fibration $\Tw^{r}(\mathcal{I}) \xto{\eta_{\mathcal{I}}^{\op}} \mathcal{I} \times
  \mathcal{I}^{\op}$ and hence the left fibration $\TwL(\mathcal{I})
  \xto{\eta_{\mathcal{I}}} \mathcal{I}^{\op}\times \mathcal{I}$.
  From \cref{wtcolimrfib} and \cref{wtlimlfib} we therefore have
  natural equivalences
  \[
    \colim_{\mathcal{I} \times
      \mathcal{I}^{\op}}^{\Map_{\mathcal{I}}(\blank,\blank)} F
    \simeq \colim_{\Tw^{r}(\mathcal{I})} F \circ
     \eta_{\mathcal{I}}^{\op}, \qquad \lim_{\mathcal{I}^{\op} \times
      \mathcal{I}}^{\Map_{\mathcal{I}}(\blank,\blank)} F
    \simeq \lim_{\TwL(\mathcal{I})} F \circ
     \eta_{\mathcal{I}}.\]
   The claims then follow by combining these equivalences with those
   of \cref{cor:endviatw} and \cref{rmk:coendtw}.
 \end{proof}

\begin{remark}
  Cordier and Porter~\cite{CordierPorter} introduced homotopy-coherent
  ends and coends in the context of simplicial categories using a
  version of the characterization in \cref{cor:coendweight}: they
  consider simplicially enriched (co)limits weighted by a cofibrant
  replacement of the mapping space functor in a simplicial category.
\end{remark}

We can also give a characterization of weighted colimits in
$\mathcal{S}$ via the universal property of presheaves as a free
cocompletion:
\begin{propn}\label{propn:colimWPSh}
  For $W \colon \mathcal{I} \to \mathcal{S}$, the unique
  colimit-preserving functor
  \[\Yo_{\mathcal{I},!}W\colon \mathcal{P}(\mathcal{I}) \to
    \mathcal{S}\] extending $W$ is equivalent to the weighted colimit functor
  \[\colim^{W}_{\mathcal{I}^{\op}} \colon
  \Fun(\mathcal{I}^{\op},\mathcal{S}) \to \mathcal{S}.\]
\end{propn}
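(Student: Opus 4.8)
The plan is to show that both functors are colimit-preserving and agree on the image of the Yoneda embedding $\Yo_{\mathcal{I}} \colon \mathcal{I} \to \mathcal{P}(\mathcal{I})$; since $\mathcal{P}(\mathcal{I})$ is the free cocompletion of $\mathcal{I}$, such a functor is determined up to equivalence by its restriction along $\Yo_{\mathcal{I}}$, which gives the conclusion. By construction $\Yo_{\mathcal{I},!}W$ is colimit-preserving and restricts to $W$ along $\Yo_{\mathcal{I}}$, so everything reduces to two facts about the weighted colimit functor $\colim^{(-)}_{\mathcal{I}^{\op}} \colon \Fun(\mathcal{I}^{\op}, \mathcal{S}) \to \mathcal{S}$, where I am viewing $\mathcal{P}(\mathcal{I}) = \Fun(\mathcal{I}^{\op}, \mathcal{S})$ and the weight varies.

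First I would compute the value on representables. For $i \in \mathcal{I}$, we have $\colim^{\Yo_{\mathcal{I}}(i)}_{\mathcal{I}^{\op}} \phi$ for $\phi \in \Fun(\mathcal{I}^{\op},\mathcal{S})$; applying this with $\phi = W$ and using the universal mapping property of weighted colimits together with \cref{thm:natend} (Glasman), we get
\[ \Map_{\mathcal{S}}\bigl(\colim^{\Yo_{\mathcal{I}}(i)}_{\mathcal{I}^{\op}} W, X\bigr) \simeq \lim^{\Yo_{\mathcal{I}}(i)}_{\mathcal{I}^{\op}} \Map_{\mathcal{S}}(W, X) \simeq \Map_{\mathcal{P}(\mathcal{I})}\bigl(\Yo_{\mathcal{I}}(i), \Map_{\mathcal{S}}(W, X)\bigr), \]
and by the Yoneda lemma the right-hand side is $\Map_{\mathcal{S}}(W(i), X)$, naturally in $X$ and in $i$. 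Hence $\colim^{\Yo_{\mathcal{I}}(i)}_{\mathcal{I}^{\op}} W \simeq W(i)$, i.e.\ the weighted colimit functor restricts to $W$ along $\Yo_{\mathcal{I}}$; this matches \cref{rmk:Yonedawt} specialized appropriately.

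Next I would check that $\colim^{(-)}_{\mathcal{I}^{\op}} \colon \Fun(\mathcal{I}^{\op}, \mathcal{S}) \to \mathcal{S}$ preserves colimits in the weight variable. By \cref{wtcolimrfib}, for a weight $W$ classified by the right fibration $p_W \colon \mathcal{W} \to \mathcal{I}$ we have $\colim^{W}_{\mathcal{I}^{\op}} \phi \simeq \colim_{\mathcal{W}^{\op}} \phi^{\op} \circ p_W^{\op}$ — or more cleanly, it suffices to treat the case $\phi = \Yo_{\mathcal{I}}$, where this says $\colim^{W}_{\mathcal{I}^{\op}} \Yo_{\mathcal{I}} \simeq W$ (this is exactly \cref{eq:Yonedacolim}), so the weighted colimit functor evaluated at the identity weight-diagram is the identity on $\mathcal{P}(\mathcal{I})$, which is visibly colimit-preserving. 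More to the point: unstraightening is an equivalence $\Fun(\mathcal{I}^{\op},\mathcal{S}) \simeq \RFib(\mathcal{I})$ that preserves colimits, and the functor sending a right fibration $\mathcal{W} \to \mathcal{I}$ to $\colim_{\mathcal{W}^{\op}}(\phi^{\op} \circ p_W^{\op})$ — equivalently $\colim_{\mathcal{W}}$ of the relevant diagram — is colimit-preserving because colimits in $\mathcal{S}$ over a variable base are computed by the Grothendieck construction / left fibration being "free", and a colimit of right fibrations is their join in $\Cat_{\infty/\mathcal{I}}$; I would phrase this by noting that $\colim^{(-)}_{\mathcal{I}^{\op}} \phi$ is, by the universal property in the excerpt, corepresented compatibly, and use that $\mathcal{P}(\mathcal{I}) \to \Fun(\mathcal{P}(\mathcal{I})^{\op}, \mathcal{S})$-style arguments give colimit-preservation from the pointwise formula. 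The cleanest route is: $\colim^{W}_{\mathcal{I}^{\op}}\phi$ is functorial and colimit-preserving in $W$ because it is a coend $\int^{\mathcal{I}} W \times \phi$, which is a colimit and hence commutes with the colimit in $W$.

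The main obstacle is making the colimit-preservation-in-the-weight step rigorous and clean rather than hand-wavy: the honest argument is that $\colim^{W}_{\mathcal{I}^{\op}}\phi$ is, by definition, the coend of $W \boxtimes \phi \colon \mathcal{I} \times \mathcal{I}^{\op} \to \mathcal{S}$, the coend is computed as a colimit (over $\Tw^{r}(\mathcal{I})$, by \cref{rmk:coendtw}, or over $\simp_{/\mathcal{I}}^{\op}$), and the assignment $W \mapsto W \boxtimes \phi$ preserves colimits since colimits in functor categories and in $\mathcal{S}$ are computed pointwise and $(-) \times \phi(c)$ preserves colimits in $\mathcal{S}$; composing two colimit-preserving functors, $W \mapsto \colim^{W}_{\mathcal{I}^{\op}}\phi$ preserves colimits. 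Together with the computation on representables, the universal property of $\mathcal{P}(\mathcal{I})$ as the free cocompletion (that a colimit-preserving functor out of it is determined by its restriction to $\mathcal{I}$) finishes the proof. I would write this last invocation explicitly, e.g.\ via \cite{HTT}*{Theorem 5.1.5.6}.
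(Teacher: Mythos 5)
Your proposal is correct in substance and follows the same overall decomposition as the paper: both reduce the claim, via the universal property of $\mathcal{P}(\mathcal{I})$ as a free cocompletion, to showing that the weighted colimit functor preserves colimits and restricts to $W$ along the Yoneda embedding. Your computation on representables is essentially the paper's (which simply cites \cref{rmk:Yonedawt}, whose proof is exactly your Glasman-plus-Yoneda chain of equivalences). For the colimit-preservation step the routes differ: the paper reads it off from the natural equivalence $\Map_{\mathcal{S}}(\colim^{W}_{\mathcal{I}^{\op}}\phi, X) \simeq \Map_{\Fun(\mathcal{I},\mathcal{S})}(W, \Map_{\mathcal{S}}(\phi,X))$ of \cref{thm:natend}, using that objectwise colimits are turned into limits by $\Map_{\mathcal{S}}(\blank,X)$; you instead use that the coend is itself a colimit (over $\Tw^{r}$ or $\simp_{/\mathcal{I}}^{\op}$) of a diagram depending colimit-preservingly on the input. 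Both work; the paper's is slightly slicker because it reuses an equivalence already in play, while yours is more self-contained. Two cautions. First, you consistently argue colimit-preservation ``in the weight variable'' $W$, but the functor in the statement is $\phi \mapsto \colim^{W}_{\mathcal{I}^{\op}}\phi$ with $W$ fixed and the diagram $\phi \in \Fun(\mathcal{I}^{\op},\mathcal{S})$ varying; your coend argument is symmetric in $W$ and $\phi$, so it does go through, but you should state it for the correct variable, and also fix the variance slips (e.g.\ in $\colim^{\Yo_{\mathcal{I}}(i)}_{\mathcal{I}^{\op}}W$ the weight for a colimit over $\mathcal{I}^{\op}$ should be a functor $\mathcal{I} \to \mathcal{S}$, so this should read $\colim^{\Yo_{\mathcal{I}}(i)}_{\mathcal{I}}W$, which agrees with $\colim^{W}_{\mathcal{I}^{\op}}\Yo_{\mathcal{I}}(i)$ by the symmetry of the coend). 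Second, the middle passage about unstraightening, joins of right fibrations, and ``free'' left fibrations is not an argument and should simply be deleted in favour of the coend argument you give at the end.
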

\begin{proof}
  It suffices to show that $\colim^{W}_{\mathcal{I}^{\op}}$ preserves
  colimits, and that $\colim^{W}_{\mathcal{I}^{\op}} \circ
  \Yo_{\mathcal{I}}$ is equivalent to $W$. The former follows from the
  equivalence
  \[ \Map_{\mathcal{S}}(\colim^{W}_{\mathcal{I}^{\op}} \phi, X) \simeq
    \Map_{\Fun(\mathcal{I},\mathcal{S})}(W, \Map_{\mathcal{S}}(\phi,
    X))\]
  provided by \cref{thm:natend} (using that colimits in functor
  \icats{} are computed objectwise), while the latter follows from
  \cref{rmk:Yonedawt}.
\end{proof}

\begin{propn}
  Suppose $\mathcal{C}$ is a presentable \icat{}. Then for $W \colon
  \mathcal{I} \to \mathcal{S}$, the weighted
  colimit functor \[\colim^{W}_{\mathcal{I}^{\op}} \colon
    \Fun(\mathcal{I}^{\op}, \mathcal{C}) \to \mathcal{C}\]
  is equivalent to the tensor product
  \[ \Yo_{\mathcal{I},!}W \otimes \id_{\mathcal{C}} \colon
    \mathcal{P}(\mathcal{I}) \otimes \mathcal{C} \to
    \mathcal{S}\otimes \mathcal{C}\]
  in the \icat{} $\PrL$ of presentable \icats{}.
\end{propn}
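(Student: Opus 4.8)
The plan is to reduce the statement to a universal property in $\PrL$. Recall that $\mathcal{P}(\mathcal{I})$ is the free cocompletion of $\mathcal{I}$, and in $\PrL$ we have $\mathcal{P}(\mathcal{I}) \otimes \mathcal{C} \simeq \Fun(\mathcal{I}^{\op}, \mathcal{C})$, since tensoring with $\mathcal{C}$ commutes with the colimit presentation of $\mathcal{P}(\mathcal{I})$ and sends the Yoneda embedding to the $\mathcal{C}$-valued Yoneda embedding $\mathcal{I} \to \Fun(\mathcal{I}^{\op}, \mathcal{C})$. Similarly $\mathcal{S} \otimes \mathcal{C} \simeq \mathcal{C}$, as $\mathcal{S}$ is the unit of $\PrL$. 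Under these identifications the functor $\Yo_{\mathcal{I},!}W \otimes \id_{\mathcal{C}}$ becomes a colimit-preserving functor $\Fun(\mathcal{I}^{\op}, \mathcal{C}) \to \mathcal{C}$; I want to show it agrees with $\colim^{W}_{\mathcal{I}^{\op}}$.

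First I would check that $\colim^{W}_{\mathcal{I}^{\op}} \colon \Fun(\mathcal{I}^{\op}, \mathcal{C}) \to \mathcal{C}$ preserves colimits. This follows exactly as in the proof of \cref{propn:colimWPSh}: using \cref{thm:natend} applied in the $\mathcal{C}$-enriched setting (or, more concretely, via the universal property $\Map_{\mathcal{C}}(\colim^{W}_{\mathcal{I}^{\op}}\phi, c) \simeq \lim^{W}_{\mathcal{I}^{\op}}\Map_{\mathcal{C}}(\phi, c)$, together with the fact that colimits in $\Fun(\mathcal{I}^{\op}, \mathcal{C})$ are computed objectwise and that $\Map_{\mathcal{C}}(\blank, c)$ sends colimits to limits). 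Hence $\colim^{W}_{\mathcal{I}^{\op}}$ defines a morphism in $\PrL$. Now both $\colim^{W}_{\mathcal{I}^{\op}}$ and (the identification of) $\Yo_{\mathcal{I},!}W \otimes \id_{\mathcal{C}}$ are colimit-preserving functors out of $\mathcal{P}(\mathcal{I}) \otimes \mathcal{C}$; since $\mathcal{P}(\mathcal{I}) \otimes \mathcal{C}$ is generated under colimits by the image of $\mathcal{I} \times \mathcal{C}$ (the objects $\Yo_{\mathcal{I}}(i) \boxtimes c$), it suffices to produce a natural equivalence between the two composites with $\mathcal{I} \times \mathcal{C} \to \mathcal{P}(\mathcal{I}) \otimes \mathcal{C}$.

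On the $\mathcal{C}$-valued representable $\Yo_{\mathcal{I}}(i) \otimes c$, which corresponds to the functor $\mathcal{I}^{\op} \to \mathcal{C}$ sending $j \mapsto \Map_{\mathcal{I}}(j,i) \otimes c$ (copower), the functor $\Yo_{\mathcal{I},!}W \otimes \id_{\mathcal{C}}$ returns $W(i) \otimes c$ by construction. On the other hand, $\colim^{W}_{\mathcal{I}^{\op}}$ applied to this functor can be computed via \cref{rmk:Yonedawt}: the copowered representable is the image of $\Yo_{\mathcal{I}}(i)$ under the colimit-preserving functor $\mathcal{P}(\mathcal{I}) \to \Fun(\mathcal{I}^{\op}, \mathcal{C})$ given by tensoring with $c$, and the same manipulation as in \cref{rmk:Yonedawt} (using $\Map_{\mathcal{C}}(\blank, d)$, \cref{thm:natend}, and the Yoneda lemma) shows $\colim^{W}_{\mathcal{I}^{\op}}(\Yo_{\mathcal{I}}(i) \otimes c) \simeq W(i) \otimes c$, naturally in $i$ and $c$. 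I expect the main obstacle to be bookkeeping: making the identifications $\mathcal{P}(\mathcal{I}) \otimes \mathcal{C} \simeq \Fun(\mathcal{I}^{\op}, \mathcal{C})$ and $\mathcal{S} \otimes \mathcal{C} \simeq \mathcal{C}$ compatible with the various Yoneda embeddings and copower functors, so that the equivalence $\colim^{W}_{\mathcal{I}^{\op}}(\Yo_{\mathcal{I}}(i) \otimes c) \simeq W(i) \otimes c$ is genuinely natural and matches the one coming from $\Yo_{\mathcal{I},!}W \otimes \id_{\mathcal{C}}$; once this is set up, both sides are colimit-preserving functors agreeing on generators, hence equivalent.
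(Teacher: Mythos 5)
Your proposal is correct and follows essentially the same route as the paper: both arguments use that $\colim^{W}_{\mathcal{I}^{\op}}$ preserves colimits and then compare the two functors via the universal property of the tensor product in $\PrL$, using \cref{thm:natend} to establish the projection formula $\colim^{W}_{\mathcal{I}^{\op}}(\phi \otimes c) \simeq (\colim^{W}_{\mathcal{I}^{\op}}\phi)\otimes c$; the paper performs the comparison on $\mathcal{P}(\mathcal{I})\times\mathcal{C}$ and invokes \cref{propn:colimWPSh}, whereas you restrict further to $\mathcal{I}\times\mathcal{C}$. The one point to tighten is the justification for that last restriction: ``generated under colimits'' alone does not determine a colimit-preserving functor, so you should instead appeal to the fact that restriction along $\mathcal{I}\times\mathcal{C}\to\mathcal{P}(\mathcal{I})\otimes\mathcal{C}$ is fully faithful on colimit-preserving functors (by the universal properties of $\mathcal{P}(\mathcal{I})$ as a free cocompletion and of the tensor product in $\PrL$) --- which is exactly the bookkeeping you flag at the end.
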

\begin{proof}
  The equivalence
  $\mathcal{P}(\mathcal{I}) \otimes \mathcal{C} \simeq
  \Fun(\mathcal{I}^{\op}, \mathcal{C})$ from \cite{HA}*{Proposition
    4.8.1.17} is determined by a functor
  $\mathcal{P}(\mathcal{I}) \times \mathcal{C} \to \Fun(\mathcal{I}^{\op},
  \mathcal{C})$ that preserves colimits in each variable, by the
  universal property of the cocomplete tensor product. This
  corresponds to the functor
  $\mathcal{P}(\mathcal{I}) \times \mathcal{C} \times \mathcal{I}^{\op} \to
  \mathcal{C}$ given by
  \[ (\phi, c, i) \mapsto \phi(i) \otimes c,\]
  using the tensoring of $\mathcal{C}$ over $\mathcal{S}$ (given by
  taking colimits of constant functors).

The functor $\colim^{W}_{\mathcal{I}^{\op}}$ preserves colimits by the
same argument as in the proof of \cref{propn:colimWPSh} and is therefore
determined by the composite
\[ \mathcal{P}(\mathcal{I}) \times \mathcal{C} \to \Fun(\mathcal{I}^{\op},\mathcal{C}) \to \mathcal{C},
  \qquad (\phi, c) \mapsto \colim^{W}_{\mathcal{I}^{\op}} (\phi
  \otimes c).\]
Now using \cref{thm:natend} we have natural equivalences
\[
  \begin{split}
  \Map_{\mathcal{C}}(\colim^{W}_{\mathcal{I}^{\op}} (\phi
  \otimes c), x) & \simeq \Map_{\Fun(\mathcal{I},\mathcal{S})}(W,
  \Map_{\mathcal{C}}(\phi \otimes c, x)) \\
   & \simeq
  \Map_{\Fun(\mathcal{I},\mathcal{S})}(W, \Map_{\mathcal{S}}(\phi,
  \Map_{\mathcal{C}}(c, x))) \\
   & \simeq
  \Map_{\mathcal{S}}(\colim^{W}_{\mathcal{I}^{\op}} \phi,
  \Map_{\mathcal{C}}(c, x)) \\
   & \simeq \Map_{\mathcal{C}}((\colim^{W}_{\mathcal{I}^{\op}}\phi)
  \otimes c, x),
  \end{split}
\]
which gives a natural equivalence
\[ \colim^{W}_{\mathcal{I}^{\op}} (\phi
  \otimes c) \simeq (\colim^{W}_{\mathcal{I}^{\op}}\phi)
  \otimes c\]
using the Yoneda Lemma. The right-hand side here is the functor
  \[ \mathcal{P}(\mathcal{I}) \times \mathcal{C} \to \mathcal{C},
    \qquad (\phi, c) \mapsto (\Yo_{\mathcal{I},!}W)(\phi) \otimes c \]
by \cref{propn:colimWPSh}, or in other words the composite
\[ \mathcal{P}(\mathcal{I}) \times \mathcal{C}
  \xto{\Yo_{\mathcal{I},!}W \times \id_{\mathcal{C}}} \mathcal{S} \times
  \mathcal{C} \xto{\otimes} \mathcal{C},\]
which corresponds to $(\Yo_{\mathcal{I},!}W) \otimes
\id_{\mathcal{C}}$ composed with the equivalence $\mathcal{S} \otimes
\mathcal{C} \isoto \mathcal{C}$, as required.
\end{proof}

Combining \cref{cor:coendweight} with the last two results we get the
description of the coend functor that is taken as the definition in
\cite{AnelLejay}:
\begin{cor}\label{cor:coendpsh} 
  Let $\mathcal{I}$ be a small \icat{}.
  \begin{enumerate}[(i)]
  \item The coend functor
    \[ \int_{\mathcal{I}} \colon \mathcal{P}(\mathcal{I}^{\op} \times
      \mathcal{I}) \to \mathcal{S}\] is equivalent to the unique
    colimit-preserving functor
    $\Yo_{\mathcal{I}^{\op} \times
      \mathcal{I},!}\Map_{\mathcal{I}}(\blank,\blank)$ extending
    $\Map_{\mathcal{I}}(\blank,\blank) \colon \mathcal{I}^{\op} \times
    \mathcal{I} \to \mathcal{S}$.
  \item If $\mathcal{C}$ is a presentable \icat{}, then the coend
    functor
    \[ \int_{\mathcal{I}} \colon \Fun(\mathcal{I} \times
      \mathcal{I}^{\op}, \mathcal{C}) \to \mathcal{C}\]
    is equivalent to the tensor product
    \[ \Yo_{\mathcal{I}^{\op} \times
      \mathcal{I},!}\Map_{\mathcal{I}}(\blank,\blank) \otimes
    \id_{\mathcal{C}} \colon \mathcal{P}(\mathcal{I}^{\op}\times
    \mathcal{I}) \otimes \mathcal{C} \to \mathcal{S} \otimes
    \mathcal{C}\]
  in $\PrL$. \qed
  \end{enumerate}
\end{cor}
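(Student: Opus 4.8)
The plan is to deduce both parts directly by combining \cref{cor:coendweight} with \cref{propn:colimWPSh} and the proposition immediately preceding this corollary (on weighted colimits into a presentable \icat{}), applied in each case with index \icat{} $\mathcal{J} := \mathcal{I}^{\op} \times \mathcal{I}$ and weight $W := \Map_{\mathcal{I}}(\blank,\blank) \colon \mathcal{I}^{\op} \times \mathcal{I} \to \mathcal{S}$. Note that $\mathcal{J}^{\op} = \mathcal{I} \times \mathcal{I}^{\op}$, so $\mathcal{P}(\mathcal{J}) = \Fun(\mathcal{I} \times \mathcal{I}^{\op}, \mathcal{S})$ is the domain of $\int_{\mathcal{I}}$ in part (i), and $\Fun(\mathcal{J}^{\op}, \mathcal{C}) = \Fun(\mathcal{I} \times \mathcal{I}^{\op}, \mathcal{C})$ is its domain in part (ii). The first step is to record that \cref{cor:coendweight} identifies the coend functor with the $W$-weighted colimit functor $\colim^{W}_{\mathcal{J}^{\op}} \colon \Fun(\mathcal{J}^{\op}, \mathcal{C}) \to \mathcal{C}$; since $\mathcal{I}$, and hence $\Tw^{r}(\mathcal{I})$, is small, both functors are defined on all of $\Fun(\mathcal{J}^{\op}, \mathcal{C})$ whenever $\mathcal{C} = \mathcal{S}$ or $\mathcal{C}$ is presentable.

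Granting this, part (i) follows by specializing to $\mathcal{C} = \mathcal{S}$ and applying \cref{propn:colimWPSh} with index \icat{} $\mathcal{J}$ and weight $W$: that proposition says $\colim^{W}_{\mathcal{J}^{\op}} \colon \mathcal{P}(\mathcal{J}) \to \mathcal{S}$ is the unique colimit-preserving functor $\Yo_{\mathcal{J},!}W$ extending $W = \Map_{\mathcal{I}}(\blank,\blank)$, which together with the first step is precisely the assertion (and in particular shows that $\int_{\mathcal{I}}$ preserves colimits). For part (ii) one instead specializes to a presentable \icat{} $\mathcal{C}$ and applies the preceding proposition with index \icat{} $\mathcal{J}$ and weight $W$, which identifies $\colim^{W}_{\mathcal{J}^{\op}} \colon \Fun(\mathcal{J}^{\op}, \mathcal{C}) \to \mathcal{C}$ with the tensor product $\Yo_{\mathcal{J},!}W \otimes \id_{\mathcal{C}}$ in $\PrL$; combined with the first step this gives the claim.

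The one point that is not purely formal --- and the step I would be most careful about --- is the first one: upgrading the pointwise equivalences of \cref{cor:coendweight} to an equivalence of functors. This should be straightforward. By \cref{rmk:coendtw} the coend functor is the composite of restriction along $\eta_{\mathcal{I}}^{\op} \colon \Tw^{r}(\mathcal{I}) \to \mathcal{I} \times \mathcal{I}^{\op}$ with the colimit functor $\Fun(\Tw^{r}(\mathcal{I}), \mathcal{C}) \to \mathcal{C}$, which is manifestly functorial in the input diagram, and its comparison with the weighted colimit functor is exactly the natural equivalence produced in the proof of \cref{wtcolimrfib} (which rests in turn on the natural equivalence of \cref{wtlimlfib}). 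Once this naturality is in hand, the rest is just substituting $\mathcal{J} = \mathcal{I}^{\op} \times \mathcal{I}$ and $W = \Map_{\mathcal{I}}(\blank,\blank)$ into the two preceding propositions, so I expect the whole argument to run to only a few lines. Alternatively, one could phrase \cref{wtlimlfib} through \cref{cor:coendweight} as equivalences of functors from the outset, which would make the present corollary immediate.
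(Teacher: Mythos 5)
Your proposal matches the paper exactly: the corollary is stated with no separate proof precisely because it follows by combining \cref{cor:coendweight} with \cref{propn:colimWPSh} and the preceding proposition on presentable $\mathcal{C}$, specialized to the index \icat{} $\mathcal{I}^{\op}\times\mathcal{I}$ and the weight $\Map_{\mathcal{I}}(\blank,\blank)$. Your extra remark about upgrading the pointwise equivalences of \cref{cor:coendweight} to an equivalence of functors is a reasonable point of care that the paper leaves implicit, and your resolution of it is correct.
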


\begin{bibdiv}
  \begin{biblist}
    \bib{AnelLejay}{article}{
  author={Anel, Mathieu},
  author={Lejay, Damien},
  title={Exponentiable Higher Toposes},
  date={2018},
  eprint={arXiv:1802.10425},
}

\bib{BarwickMackey}{article}{
  author={Barwick, Clark},
  title={Spectral {M}ackey functors and equivariant algebraic $K$-theory ({I})},
  journal={Adv. Math.},
  volume={304},
  date={2017},
  pages={646--727},
  eprint={arXiv:1404.0108},
  year={2014},
}

\bib{CordierPorter}{article}{
  author={Cordier, Jean-Marc},
  author={Porter, Timothy},
  title={Homotopy coherent category theory},
  journal={Trans. Amer. Math. Soc.},
  volume={349},
  date={1997},
  number={1},
  pages={1--54},
}

\bib{freepres}{article}{
  author={Gepner, David},
  author={Haugseng, Rune},
  author={Nikolaus, Thomas},
  title={Lax colimits and free fibrations in $\infty $-categories},
  eprint={arXiv:1501.02161},
  journal={Doc. Math.},
  volume={22},
  date={2017},
  pages={1225--1266},
}

\bib{GlasmanTHHHodge}{article}{
  author={Glasman, Saul},
  title={A spectrum-level {H}odge filtration on topological {H}ochschild homology},
  journal={Selecta Math. (N.S.)},
  volume={22},
  date={2016},
  number={3},
  pages={1583--1612},
  eprint={arXiv:1408.3065},
}

\bib{cois}{article}{
  author={Haugseng, Rune},
  author={Melani, Valerio},
  author={Safronov, Pavel},
  title={Shifted coisotropic correspondences},
  date={2019},
  eprint={arXiv:1904.11312},
}

\bib{LoregianEnd}{article}{
  author={Loregian, Fosco},
  date={2019},
  title={Coend calculus},
  eprint={arXiv:1501.02503},
}

\bib{LoregianFub}{article}{
  author={Loregian, Fosco},
  date={2019},
  title={A {F}ubini rule for $\infty $-coends},
  eprint={arXiv:1902.06086},
}

\bib{HTT}{book}{
  author={Lurie, Jacob},
  title={Higher Topos Theory},
  series={Annals of Mathematics Studies},
  publisher={Princeton University Press},
  address={Princeton, NJ},
  date={2009},
  volume={170},
  note={Available from \url {http://math.ias.edu/~lurie/}},
}

\bib{HA}{book}{
  author={Lurie, Jacob},
  title={Higher Algebra},
  date={2017},
  note={Available at \url {http://math.ias.edu/~lurie/}.},
}

\bib{MacLaneWorking}{book}{
  author={Mac Lane, Saunders},
  title={Categories for the working mathematician},
  series={Graduate Texts in Mathematics},
  volume={5},
  edition={2},
  publisher={Springer-Verlag, New York},
  date={1998},
}

\bib{MazelGeeGroth}{article}{
  author={Mazel-Gee, Aaron},
  title={On the Grothendieck construction for $\infty $-categories},
  journal={J. Pure Appl. Algebra},
  volume={223},
  date={2019},
  number={11},
  pages={4602--4651},
  eprint={arXiv:1510.03525},
}

\bib{RovelliWeight}{article}{
  author={Rovelli, Martina},
  title={Weighted limits in an $(\infty ,1)$-category},
  date={2019},
  eprint={arXiv:1902.00805},
}

\bib{ShahThesis}{article}{
  author={Shah, Jay},
  title={Parametrized higher category theory and higher algebra: Exposé II -- Indexed homotopy limits and colimits},
  date={2018},
  eprint={arXiv:1809.05892},
}

\bib{YonedaExt}{article}{
  author={Yoneda, Nobuo},
  title={On Ext and exact sequences},
  journal={J. Fac. Sci. Univ. Tokyo Sect. I},
  volume={8},
  date={1960},
  pages={507--576 (1960)},
}
\end{biblist}
\end{bibdiv}

\end{document}